\title{\textbf{On Clique Graphs and\\Clique Regular Graphs}}
\author{Robert R. Petro and Connor M. Phillips}
\date{February 2025}
\newtheorem{thm}{Theorem}[section]
\newtheorem{lem}[thm]{Lemma}
\newtheorem{cor}[thm]{Corollary}
\theoremstyle{definition}
\newtheorem{remark}[thm]{Remark}
\newtheorem{exmp}{Example}
\begin{document}

\maketitle

\begin{abstract}
If $\Gamma$ is a graph for which every edge is in exactly one clique of order $\omega$, then one can form a new graph with vertex set equal to these cliques.  This is a generalization of the line graph of $\Gamma$.  We discover many general results and classifications related to these clique graph that will be useful to researchers studying these objects. In particular, we find bounds on its eigenvalues (with exact results when $\Gamma$ is $k$-regular) and some complete classifications when $\Gamma$ is strongly regular. We apply our results to many examples, including Conway’s 99-graph problem and the existence problem for other strongly regular graphs.
\end{abstract}

\medskip

\section{Introduction}
\tab A popular notion in the study of graph theory is the line graph. The \textbf{line graph} of a graph $\Gamma$, also referred to as an interchange graph or an edge graph, shows the adjacency between the edges of $\Gamma$. The line graph is denoted as $L(\Gamma)$. In all but one case, the structure of $\Gamma$ can be completely recovered from its line graph (the $K_3$ graph and the $K_{1,3}$ have the same line graph but are not isomorphic thus the original structure is not fully recoverable) \cite{williams}. The line graph of $\Gamma$ is defined by taking the set of edges of $\Gamma$ as the vertices of $L(\Gamma)$ and two vertices are adjacent if and only if their corresponding edges in $\Gamma$ share a vertex.\par
If $\Gamma$ is $k$-regular with $n$ vertices, then $m = \frac{nk}{2}$ is the number of edges. The eigenvalues of the line graph can be derived from the eigenvalues of the original graph $\{\lambda_i^{a_i}\}$ (where the exponent denotes the multiplicity of the eigenvalue as is customary) as it is well known that the characteristic polynomial of the line graph is given by
\[p(L(G); \lambda)=(\lambda +2)^{m-n}p(\Gamma;\lambda + 2 - k).\]
Therefore the spectrum of $L(\Gamma)$ is
\[-2^{m-n}, (\lambda_i + k - 2)^{a_i}.\]
\tab As $\Gamma$ becomes larger (in both number of vertices and edges) and more complex (add in regularity and number of common neighbor requirements), the line graph of $\Gamma$ becomes exceedingly more intensive to construct. For example, a smaller graph with 10 vertices that is 6-regular has a line graph with 30 vertices and 150 edges, while a larger graph with 99 vertices that is 14-regular has a line graph with 693 vertices and 9009 edges. This issue makes the construction of line graphs from graphs of considerable size difficult. Moreover, if the graph is unknown and will be of considerable size then the construction of a potential line graph in order to obtain $\Gamma$ is infeasible.\\
\tab\tab In this paper, we introduce a generalization of the line graph that we call the clique graph. We use this generalization to avoid the issues that line graphs typically experience with larger and more complex graphs. Using the clique graph, and its properties, we introduce a new classification of graphs that we designate as clique regular graphs.\\
\tab\tab The clique graph, clique regular graphs, and their properties prove to be useful tools in deducing not only information about graphs they are derived from, but also information about the properties of many families of graphs. We demonstrate in Example \ref{exmp:1,2s} that the clique graph of a graph can be used to gather novel information about strongly regular graphs whose existence is unknown.\\
\tab\tab The paper is organized as follows. In the next section we introduce basic principles of graphs, strongly regular graphs, and line graphs we will use. In the subsequent sections we define the idea and construction of the clique graph, and we establish the properties of $\omega$-clique regular graphs. We finish by applying the obtained theorems to families of graphs that have the clique regular property including locally linear graphs.

\medskip

\section{Preliminaries}

\subsection{Basic Graph Theory}
\tab We now define some notions important to our results. For other definitions used, see any standard graph theory reference \cite{harary}. A graph $\Gamma = \Gamma(V,E)$ is a finite set of vertices $V$ coupled with a finite set of pairs of vertices called edges $E$. Two vertices $x$ and $y$ are \textbf{adjacent} if and only if there exists an edge $\{x,y\}$, also denoted $xy$, in $E$. The \textbf{degree} of a vertex $x$ is the number of vertices it is adjacent to and will be denoted $d(x)$. The largest degree of any vertex in a graph $\Gamma$ will be denoted $\Delta(\Gamma)$. A graph $\Gamma$ is \textbf{regular} with degree $k$ ($k$-regular) if the degree of each vertex in $\Gamma$ is $k$. The \textbf{neighborhood} of a vertex $x$ is the set of vertices that are adjacent to vertex $x$ and will be denoted $N(x)$. A \textbf{clique} is a subset of vertices of a graph such that every vertex in the set is adjacent to every other vertex in the set. A clique of order $\omega$ ($\omega$-clique) is a clique containing $\omega$ vertices. The \textbf{clique number} of a graph $\Gamma$ is the maximum order of a clique in $\Gamma$ and it is denoted $\omega(\Gamma)$. A \textbf{complete graph}, $K_n$, is the graph with $n$ vertices such that every vertex is adjacent to every other vertex. Similarly a \textbf{complete bipartite graph}, $K_{n,m}$, is the graph on $n+m$ vertices which are partitioned into two sets of order $n$ and $m$ such that no two vertices in the same set are adjacent and any two vertices not in the same set are adjacent. A graph is called \textbf{locally linear} if it has a nonempty edge set and each edge belongs to a unique triangle, i.e. for every pair of adjacent vertices there is exactly one other common neighbor \cite{harary}. Given a fixed ordering of vertices $V=\{v_1, v_2, \dots, v_n\}$ the \textbf{adjacency matrix} of $\Gamma$ is defined as the $n \times n$ matrix $A$ with rows and columns indexed by the vertices of $\Gamma$, with
\[(A)_{ij} = 
    \begin{cases}
        1 & \text{if $v_i$ is adjacent to $v_j$}\\
        0 & \text{otherwise}
    \end{cases}\]
    where $(A)_{ij}$ is the $i, j^\text{th}$ entry of $A$. We denote by $I_n$ the $n\times n$ identity matrix. The \textbf{spectrum} of a graph $\Gamma$ is the set of eigenvalues of the adjacency matrix of $\Gamma$ along with their multiplicities. It is denoted $\lambda_1^{a_1}, \ldots , \lambda_r^{a_r}$ where the exponent represents the multiplicity of that eigenvalue.

\subsection{Strongly Regular Graphs}
\tab A graph $\Gamma$ is called \textbf{strongly regular} with parameters ($n,k,\lambda,\mu$) and is abbreviated as srg($n,k,\lambda,\mu$) if
\begin{itemize}
    \item $\Gamma$ has $n$ vertices,
    \item $\Gamma$ is $k$-regular,
    \item Adjacent vertices share exactly $\lambda$ common neighbors, and
    \item Distinct non-adjacent vertices share exactly $\mu$ common neighbors.
\end{itemize}
\tab The parameters ($n,k,\lambda,\mu$) are closely related and must obey, 
\[(n-k-1)\lambda=k(k-\mu-1).\]
\tab A level below strongly regular graphs are edge regular graphs which are denoted as $erg(n,k,\lambda)$. A graph is \textbf{edge regular} with parameters $(n,k,\lambda)$ if the first three conditions of a strongly regular graph hold. All strongly regular graphs are edge regular but not all edge regular graphs are strongly regular.\\
\tab\tab A graph $\Gamma$ is strongly regular if and only if it is regular with 3 or fewer distinct eigenvalues \cite{haemers}. So we can classify strongly regular graphs as either ``boring'' (graphs with one or two eigenvalues), disjoint unions of complete graphs and their complements, or ``non-boring'' if otherwise. Any non-boring srg$(n,k,\lambda, \mu)$ will have spectrum $k^1, r^f, s^g$ where $k>r>s$ and the latter two eigenvalues and their multiplicities are given by the following,
\[r = \frac{1}{2}\left[(\lambda - \mu) + \sqrt{(\lambda - \mu)^2+4(k-\mu)}\right],\]
\[f = \frac{1}{2}\left[(n-1)-\frac{2k + (n-1)(\lambda-\mu)}{\sqrt{(\lambda - \mu)^2+4(k-\mu)}}\right],\]
\[s = \frac{1}{2}\left[(\lambda - \mu) - \sqrt{(\lambda - \mu)^2+4(k-\mu)}\right],\]
\[g = \frac{1}{2}\left[(n-1)+\frac{2k + (n-1)(\lambda-\mu)}{\sqrt{(\lambda - \mu)^2+4(k-\mu)}}\right].\]\\
From these it is also provable that $\lambda -\mu = r+s$ and $k-\mu = -rs$.\\
\tab\tab One of the biggest questions on the topic of strongly regular graphs is for which sets of parameters ($n,k,\lambda,\mu$) does a strongly regular graph exist. A large list of feasible parameter sets can be found on Andries Brouwer's website \cite{brouwer}. Brouwer's list categorizes each set of srg parameters into three groups (existence, non-existence, and unknown). Boring graphs are excluded from this list.

\medskip

\section{Clique Graphs}

\tab The \textbf{$\omega$-clique graph} of a graph shows the adjacency between the cliques of order $\omega$ of the graph. This graph will be denoted $C_\omega(\Gamma)$ and is defined in the following manner:
\begin{enumerate}
    \item The vertices of the clique graph $C_\omega(\Gamma)$ are the cliques of order $\omega$ in $\Gamma$.
    \item Two distinct cliques are adjacent in $C_\omega(\Gamma)$ if and only if they have a nonempty intersection.
\end{enumerate}
\begin{figure}[h!]
\begin{subfigure}{.5\textwidth}
  \centering
  \includegraphics[width=.6\linewidth]{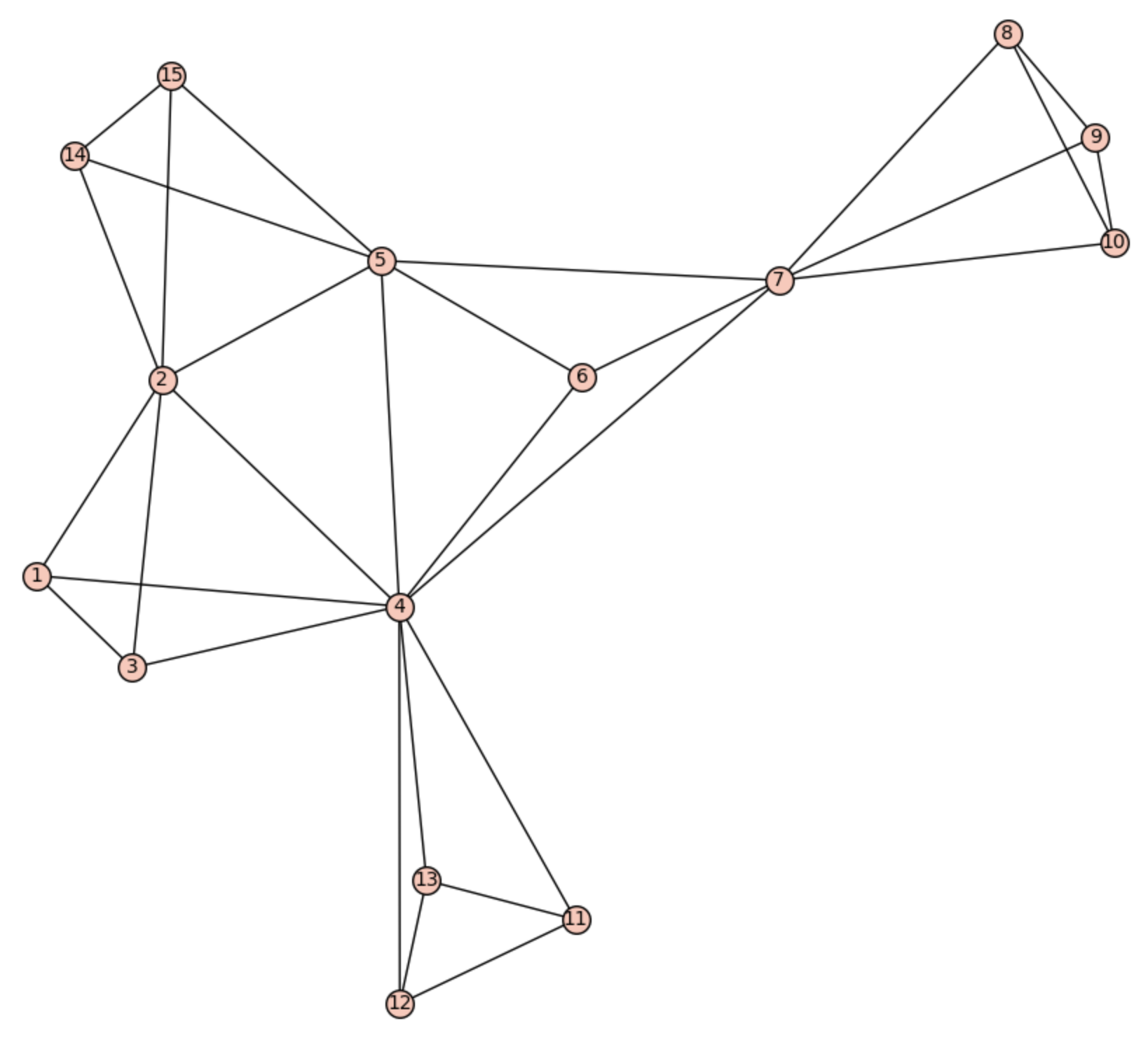}
  \caption{A graph, $\Gamma$}
\end{subfigure}
\begin{subfigure}{.5\textwidth}
  \centering
  \includegraphics[width=.6\linewidth]{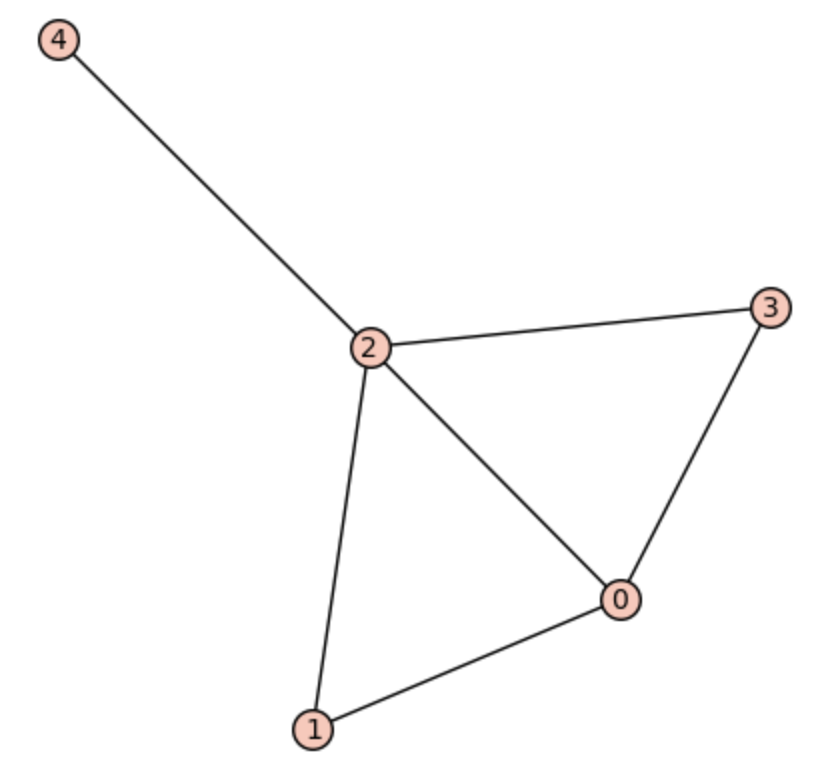}
  \caption{The $4$-clique graph of $\Gamma$, $C_4(\Gamma)$}
\end{subfigure}
\caption{}
\end{figure}
\tab Since a $2$-clique is just a single edge, it is clear that for any graph $\Gamma$, its line graph is isomorphic to its $2$-clique graph, $C_2(\Gamma) \cong L(\Gamma)$. For this reason, we consider the clique graph construction to be a generalization of the line graph and taking $\omega =2$ for any of our theorems reveals a fact about the line graph.\\

\subsection{Clique Regular Graphs}

\tab An \textbf{$\omega$-clique regular} graph is a graph with a nonempty edge set such that every edge is in a unique clique of order $\omega$. Again, since a 2-clique is just a single edge every graph with a nonempty edge set is 2-clique regular. Notice also that since a $3$-clique is a triangle, the locally linear graphs are exactly the 3-clique regular graphs. For this reason we consider the clique regular property to be a generalization of the locally linear property.
\begin{thm}
    If $\Gamma$ is an $\omega$-clique regular graph, then $\omega=2$ or $\omega=\omega(\Gamma)$.
\end{thm}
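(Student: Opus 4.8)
The plan is to first dispose of the trivial possibility $\omega=2$ (which is always allowed, since every graph with a nonempty edge set is $2$-clique regular), and then, assuming $\omega\geq 3$, prove the equality $\omega=\omega(\Gamma)$ by establishing the two inequalities $\omega\leq\omega(\Gamma)$ and $\omega\geq\omega(\Gamma)$ separately. The first inequality is immediate: since $\Gamma$ has a nonempty edge set and every edge lies in a clique of order $\omega$, at least one $\omega$-clique exists, so by definition of the clique number $\omega(\Gamma)\geq\omega$.

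The substance of the argument is the reverse inequality $\omega(\Gamma)\leq\omega$, which I would prove by contradiction. Suppose there were a clique $K$ of order $\omega'>\omega$; taking $K$ to be a maximum clique is convenient but not necessary. Fix any edge $e=\{x,y\}$ with $x,y\in K$. The key observation is that any subset of a clique is again a clique, so I can produce $\omega$-cliques containing $e$ by adjoining to $\{x,y\}$ any $\omega-2$ of the remaining $\omega'-2$ vertices of $K$. This yields exactly $\binom{\omega'-2}{\omega-2}$ distinct $\omega$-cliques, all containing $e$. Since $\omega'\geq\omega+1$ and $\omega\geq 3$, this count is at least $\binom{\omega-1}{\omega-2}=\omega-1\geq 2$, so $e$ lies in at least two distinct $\omega$-cliques. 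This contradicts the defining property that every edge lies in a \emph{unique} $\omega$-clique, and hence no clique of order exceeding $\omega$ can exist, giving $\omega(\Gamma)\leq\omega$. Combining the two inequalities yields $\omega=\omega(\Gamma)$.

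The proof is short, so the main point of care rather than a genuine obstacle is the bookkeeping in the counting step: I must verify that the hypothesis $\omega\geq 3$ is exactly what forces the binomial coefficient to exceed $1$. When $\omega=2$ the analogous construction adjoins $\omega-2=0$ extra vertices, producing only the single clique $\{x,y\}=e$ itself, so uniqueness is never violated; this is precisely why $\omega=2$ must be carved out as a separate legitimate case in the statement. I would also note in passing that the case $\omega>\omega(\Gamma)$ cannot arise at all, since there would then be no $\omega$-cliques for the nonempty edge set to sit inside, so the two listed values $\omega=2$ and $\omega=\omega(\Gamma)$ genuinely exhaust the possibilities.
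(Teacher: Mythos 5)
Your proof is correct and follows essentially the same route as the paper's: both arguments observe that $\omega>\omega(\Gamma)$ is impossible because no $\omega$-cliques would exist, and both derive a contradiction from a clique of order exceeding $\omega$ by counting the $\omega$-cliques (at least $\binom{\omega-1}{\omega-2}=\omega-1\geq 2$ of them) containing a fixed edge of that clique. Your write-up is just a slightly more explicit version of the paper's two-case argument, so there is nothing further to add.
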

\begin{proof}
    Assume $\Gamma$ is $\omega$-clique regular and $\omega \neq 2$. If $2 < \omega < \omega(\Gamma)$, then there exists a clique in $\Gamma$ of order $\omega +1$ and every edge in this clique is in at least $\omega -1$ cliques of order $\omega$ in $\Gamma$. If $\omega(\Gamma) < \omega$, then there are no $\omega$-cliques in $\Gamma$. So $\omega$ must equal $\omega(\Gamma)$.
\end{proof}

Notice that the definition of clique regularity is similar to the definition of a \textbf{regular clique assembly} as given by Guest et al \cite{guest}. They define a regular clique assembly as a regular graph with clique number greater than or equal to 2 such that every maximal clique is maximum, and each edge is in exactly one maximum clique. Clearly every regular clique assembly $\Gamma$ is $\omega(\Gamma)$-clique regular and the following theorem will classify when clique regular graphs are regular clique assemblies. 

\begin{thm} \label{thm:rca}
    Suppose $\Gamma$ is an $\omega$-clique regular graph on $n$ vertices. Then $\Gamma$ is a regular clique assembly if and only if $\Gamma$ is an erg$(n,\Delta(\Gamma),\omega -2)$.
\end{thm}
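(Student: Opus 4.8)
The plan is to prove both directions of the biconditional directly from the definitions, treating the ``regular clique assembly'' conditions as the target to reach (or to assume). Recall that a regular clique assembly requires three things: the graph is regular, its clique number is at least $2$, and every maximal clique is maximum with each edge in exactly one maximum clique. Since $\Gamma$ is already assumed $\omega$-clique regular with a nonempty edge set, Theorem~1.1 tells us that either $\omega=2$ or $\omega=\omega(\Gamma)$; in either case every edge lies in a unique $\omega$-clique, so the ``each edge is in exactly one maximum clique'' portion is already essentially built in once we know these $\omega$-cliques are the maximum cliques. The real content is relating the \textbf{regularity} of $\Gamma$ and the maximal-equals-maximum condition to the edge-regular parameter $\lambda=\omega-2$.

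For the forward direction, I would assume $\Gamma$ is a regular clique assembly and derive $\mathrm{erg}(n,\Delta(\Gamma),\omega-2)$. Regularity gives that every vertex has degree exactly $\Delta(\Gamma)$, so the first two edge-regular conditions (on $n$ and $k=\Delta(\Gamma)$) are immediate. For the third, I would fix an edge $xy$ and count common neighbors: since $xy$ lies in a unique $\omega$-clique $Q$, the $\omega-2$ other vertices of $Q$ are certainly common neighbors of $x$ and $y$. The key step is to argue there are \emph{no other} common neighbors. If $z$ were a common neighbor of $x$ and $y$ not in $Q$, then $\{x,y,z\}$ is a triangle, hence contained in some maximal clique $M$; because every maximal clique is maximum (of order $\omega$) and contains the edge $xy$, uniqueness forces $M=Q$, so $z\in Q$, a contradiction. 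Thus every edge has exactly $\omega-2$ common neighbors and $\Gamma$ is $\mathrm{erg}(n,\Delta(\Gamma),\omega-2)$.

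For the reverse direction, I would assume $\Gamma$ is $\omega$-clique regular and an $\mathrm{erg}(n,\Delta(\Gamma),\omega-2)$, and verify the three regular-clique-assembly axioms. Edge regularity supplies regularity (degree $\Delta(\Gamma)$) directly, and the nonempty edge set guarantees $\omega(\Gamma)\ge 2$. The crux is showing every maximal clique is maximum of order $\omega$. The plan is to take a maximal clique $M$ and show $|M|=\omega$: any edge $xy$ of $M$ sits in its unique $\omega$-clique $Q$, and since $M$ is a clique every vertex of $M\setminus\{x,y\}$ is a common neighbor of $x$ and $y$, but edge regularity says there are exactly $\omega-2$ such common neighbors, forcing $|M|\le\omega$; maximality together with the existence of the $\omega$-clique $Q$ through any edge then forces $|M|=\omega$, so $M=Q$. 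Combined with $\omega$-clique regularity's uniqueness, this gives ``each edge in exactly one maximum clique.''

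I expect the main obstacle to be the careful handling of the two regimes $\omega=2$ and $\omega=\omega(\Gamma)$ permitted by Theorem~1.1, and in particular the degenerate case $\omega=2$. When $\omega=2$, every edge \emph{is} its own unique $\omega$-clique, $\omega-2=0$, and the maximum cliques are single edges; I would need to check that the argument that ``maximal cliques are maximum'' still reads correctly (a maximal clique with $\lambda=0$ contains no triangle, so it is an edge or isolated structure), and that the edge-regular condition $\lambda=0$ is exactly what rules out larger cliques. The other delicate point is making sure the common-neighbor count is used symmetrically with the uniqueness of the $\omega$-clique through each edge, so that the identification $M=Q$ is watertight rather than merely a size comparison; I would phrase the counting argument in terms of a fixed edge of $M$ to keep this rigorous.
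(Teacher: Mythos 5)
Your reverse direction runs on the same mechanism as the paper's: fix a maximal clique $M$ and an edge $xy$ of $M$; the unique $\omega$-clique $Q$ through $xy$ already supplies $\omega-2$ common neighbors of $x$ and $y$, and the edge-regularity hypothesis $\lambda=\omega-2$ then pins the common neighborhood down to exactly $Q\setminus\{x,y\}$, so $M\subseteq Q$ and maximality gives $M=Q$. (The paper phrases this as a contradiction via a vertex $z\in M\setminus Q$; it is the same argument. Do spell out the containment step: knowing only $|M|\le\omega$ plus maximality of $M$ does not force $|M|=\omega$ -- you need $M\subseteq Q$, which your counting does give.) The genuine differences are organizational: the paper runs this argument only for $\omega\ge 4$ and outsources the cases $\omega\in\{2,3\}$ of the reverse direction, as well as the \emph{entire} forward direction, to Guest et al.\ \cite{guest}, whereas your counting argument is uniform in $\omega$ and you prove the forward direction from first principles. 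That self-contained treatment is a real improvement in readability, and your forward argument (extend a putative extra common neighbor $z$ to a maximal clique $M$, then identify $M=Q$ by uniqueness) is correct whenever $\omega=\omega(\Gamma)$.

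The gap sits exactly where you sensed trouble, in the case $\omega=2$. Your identification $M=Q$ in the forward direction uses that maximal cliques are maximum \emph{of order $\omega$}, i.e.\ that $\omega=\omega(\Gamma)$; the paper's first theorem only guarantees $\omega=2$ or $\omega=\omega(\Gamma)$. In the residual case $\omega=2<\omega(\Gamma)$ the forward implication is in fact false: the $3\times 3$ rook graph, the unique srg$(9,4,1,2)$, is $2$-clique regular (every graph with edges is) and is a regular clique assembly, yet it is erg$(9,4,1)$ rather than erg$(9,4,0)$. So no argument can close that case, and your proposed repair -- ``when $\omega=2$ the maximum cliques are single edges'' -- assumes triangle-freeness, which in the forward direction is precisely what would have to be proved. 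To make your write-up watertight you must either invoke the convention that $\omega$ means $\omega(\Gamma)$ in the forward direction (this is what Guest et al.'s cited result actually delivers, with $\lambda=\omega(\Gamma)-2$) or add that hypothesis explicitly. Note the paper inherits the same blind spot through its citation, so this is a defect of the statement as written rather than of your strategy; but a blind proof of the literal statement cannot succeed without addressing it.
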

\begin{proof}
    The proof that any regular clique assembly is an erg$(n,\Delta(\Gamma),\omega -2)$ comes from Guest et al \cite[pg. 304]{guest}. So now assume that $\Gamma$ is $\omega$-clique regular and an erg$(n,k,\omega-2)$ and we will show that $\Gamma$ is a regular clique assembly. If $\omega =2$ or $\omega=3$ this proof also comes from Guest et al \cite[pg. 304]{guest} so assume that $\omega$ is the clique number of $\Gamma$ and is greater than 3. Since $\Gamma$ is regular from the hypothesis, it is sufficient to show that every maximal clique in $\Gamma$ has order $\omega$. Suppose for contradiction there exists a maximal clique of order less than $\omega$, $c_<$, and let $x$ and  $y$ be vertices in that clique. Then the edge $xy$ is in a unique clique of order $\omega$, $c_\omega$, so $x$ and $y$ have $\omega -2$ neighbors in that clique. Since $c_<$ is not contained in $c_\omega$, there exists a vertex $z$ in $c_<$ and not in $c_\omega$. So $z$ is also a common neighbor of $x$ and $y$ which implies the order of $N(x)\cap N(y)$ is greater than $\omega -2$, a contradiction.
\end{proof}
This theorem will be useful in establishing the connection between clique regular graphs and strongly regular graphs.
Many well known families of graphs are clique regular. We will formally prove these in Section 4 but they include the following:
\begin{itemize}
    \item Square Rook Graphs -  Each vertex of the graph represents a point on a square grid, and there is an edge between any two points that share a row or column.
    \begin{figure}[H]
        \centering
        \includegraphics[width=0.4\linewidth]{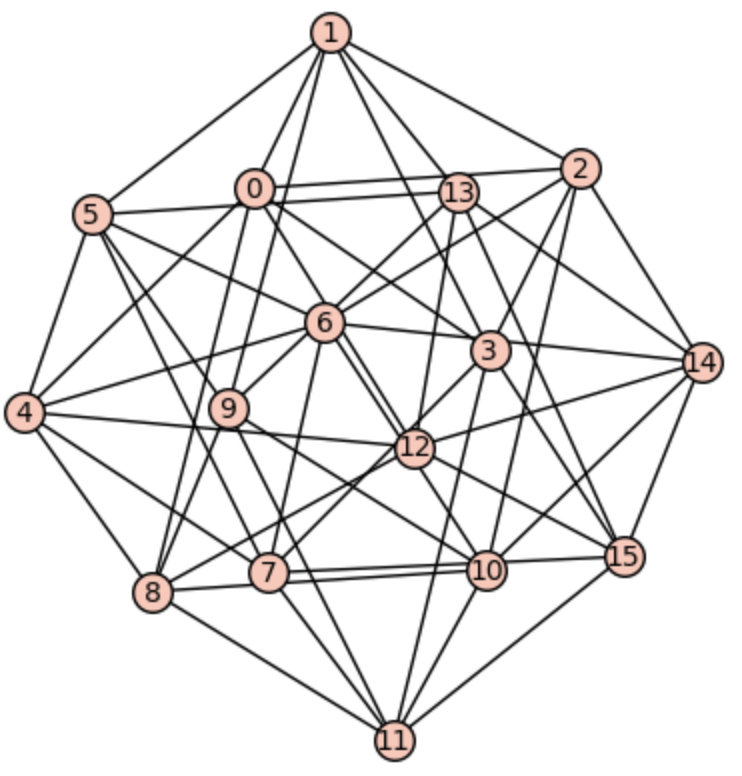}
        \caption{$5\times5$ rook graph}
        \label{fig:5x5_rook}
    \end{figure}

    \item Orthogonal Array Block Graphs - The orthogonal array block graph is the graph with vertices as the $m \times 1$ column vectors of $OA(n, m)$, where two vectors are adjacent if and only if they have nonempty intersection.
    \begin{figure}[H]
        \centering
        \includegraphics[width=0.5\linewidth]{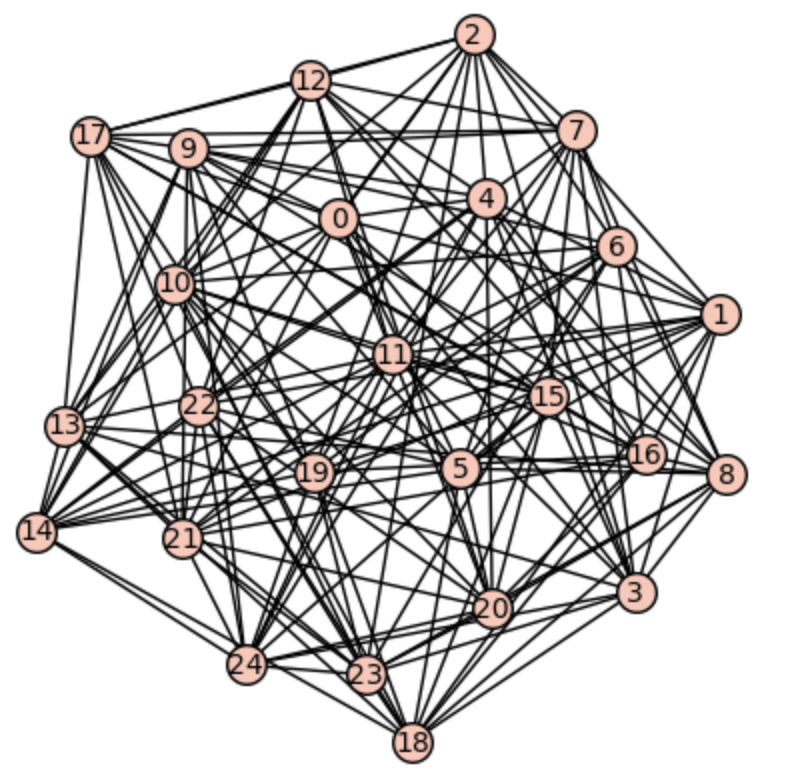}
        \caption{The blokc graph of a OA(5,3)}
    \end{figure}
    \item Generalized Quadrangle Collinearity Graphs - A generalized quadrangle GQ($s,t$) is a point-line incidence structure that satisfies the following properties for some $s,t\geq 1$: every line has $s + 1$ points, every point lies on $t + 1$ lines, there is at most one point on any two distinct lines, and if $P$ is a point not on line $l$, then there is a unique line incident with $P$ and meeting $l$.
    \begin{figure}[H]
        \centering
        \includegraphics[width=0.5\linewidth]{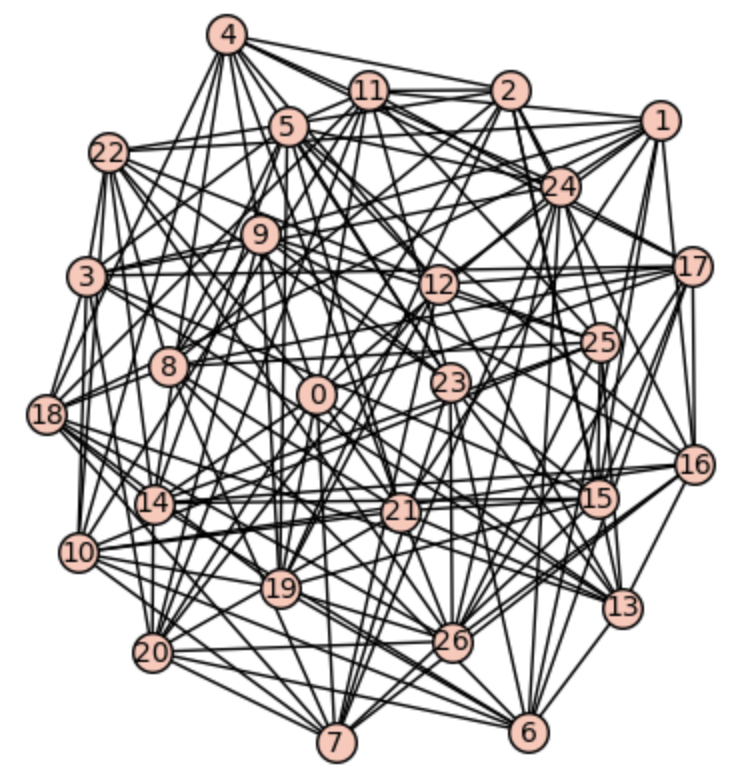}
        \caption{The Collinearity Graph of a GQ(2,4)}
    \end{figure}
    \item Triangular Graphs - These graphs, denoted $T_n$, are the line graphs of the complete graph $K_n$.
    \begin{figure}[H]
        \centering
        \includegraphics[width=0.5\linewidth]{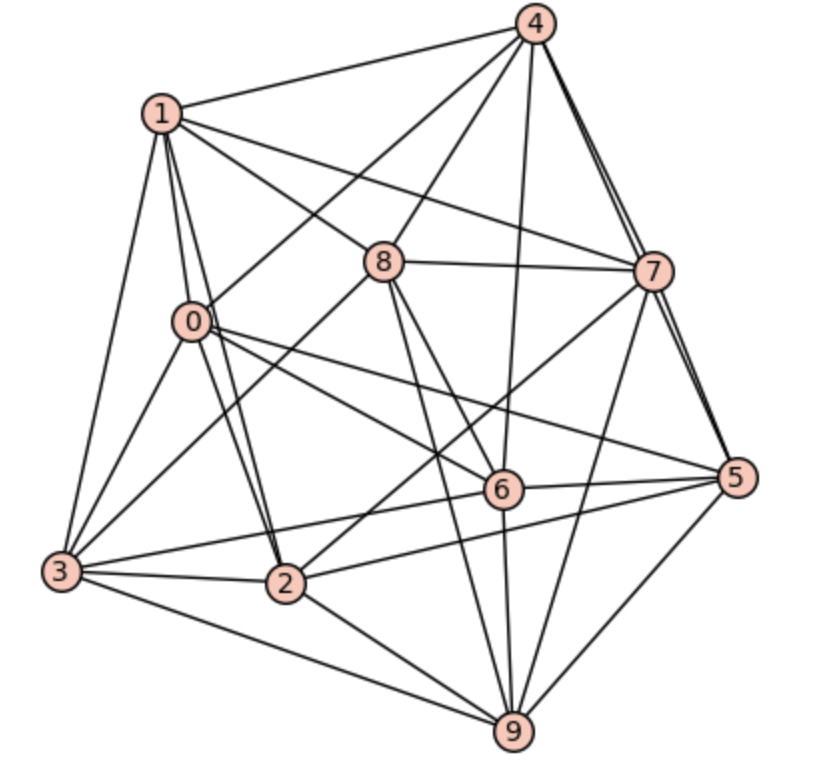}
        \caption{$T_5$ Graph}
    \end{figure}
\end{itemize}

The square rook graphs and triangular graphs are each an example of a line graph $L(\Gamma)$ that is $\omega$-clique regular and has an $\omega$-clique graph that is isomorphic to the original graph $\Gamma$. The following lemmas and theorems will classify for $w \geq 3$, all graphs which have the first property and all connected graphs that have the second property.
\begin{lem} \label{lem:d(v)}
    For each vertex $v$ in $\Gamma$, the set of edges incident to $v$ form a clique in $L(\Gamma)$ with order $d(v)$. If $d(v) > 2$, then this clique is maximal.
\end{lem}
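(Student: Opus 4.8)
The plan is to handle the two assertions in turn, each reading off directly from the definition of the line graph. For the first assertion I would fix a vertex $v$ and list the $d(v)$ edges incident to it as $vw_1, \dots, vw_{d(v)}$. Any two of these edges share the vertex $v$, so by definition of $L(\Gamma)$ their corresponding vertices are pairwise adjacent; as the edges are distinct, this yields a clique whose order is exactly $d(v)$.

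For the maximality claim under the hypothesis $d(v) > 2$, I would argue by contradiction. Suppose the clique could be enlarged by some edge $e$ of $\Gamma$. Then $e$ is not incident to $v$ (otherwise it already belongs to the clique), so write $e = xy$ with $x, y \neq v$. To lie in an enlarged clique, $e$ must be adjacent in $L(\Gamma)$ to every $vw_i$, i.e.\ $e$ must meet each edge $vw_i$ in a common vertex. Since $v \notin e$, the only vertex $e$ can share with $vw_i$ is $w_i$ itself, so each $w_i$ must be an endpoint of $e$.

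The neighbors $w_1, \dots, w_{d(v)}$ are distinct, yet $e$ has only two endpoints, which forces $d(v) \le 2$ and contradicts $d(v) > 2$. Hence no such $e$ exists and the clique is maximal. I do not expect any genuine obstacle here, as the heart of the argument is a one-line counting observation; the only point worth flagging is why the hypothesis $d(v) > 2$ is needed. When $d(v) = 2$ the two neighbors can exactly fill the two endpoint slots of $e$, and in fact if those neighbors are adjacent in $\Gamma$ the clique extends to the three edges of the resulting triangle, so maximality legitimately fails in that case.
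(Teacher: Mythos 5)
Your proof is correct and follows essentially the same route as the paper's: both establish the clique directly from the shared endpoint $v$, and both show maximality via the observation that an enlarging edge $e$ not incident to $v$ would have to meet every edge $vw_i$ at $w_i$, forcing $d(v) \leq 2$. Your remark explaining why $d(v) > 2$ is genuinely needed is a nice addition but does not change the substance of the argument.
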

\begin{proof}
    Vertex $v$ is incident to $d(v)$ edges in $\Gamma$ all of which share one endpoint $v$. So in $L(\Gamma)$ those edges incident to $v$ form a clique of order $d(v)$. Now assume this clique is not maximal and we will show $d(v) \leq 2$. Let $e$ be an edge in $\Gamma$ that is not incident to $v$ but is adjacent to all other edges incident to $v$. Then $e$ must be incident to the other end points of these edges, so $v$ can be incident to at most 2 edges.
\end{proof}
Following from Lemma \ref{lem:d(v)}, for any vertex $v \in \Gamma$ when we refer to the ``clique created by $v$" we mean the clique in $L(\Gamma)$ of order $d(v)$ induced by the edges in $\Gamma$ incident to $v$.
\begin{lem} \label{lem:whit}
    For $\omega>3$, $L(\Gamma) \cong K_\omega$ if and only if $\Gamma \cong K_{1,\omega}$. Also, $L(\Gamma) \cong K_3$ if and only if $\Gamma \cong K_3$ or $\Gamma \cong K_{1,3}$.
\end{lem}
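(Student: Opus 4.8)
The plan is to prove both biconditionals by treating the ``if'' (converse) directions as direct computations and concentrating the real work on the forward directions, which are special cases of the classical Whitney line-graph phenomenon. First I would dispose of the converses by inspection: $K_{1,\omega}$ has exactly $\omega$ edges, all meeting at the central vertex, so they are pairwise adjacent in $L(K_{1,\omega})$, giving $K_\omega$; likewise the three edges of $K_3$ pairwise share a vertex and the three edges of $K_{1,3}$ all meet at the center, so $L(K_3) \cong L(K_{1,3}) \cong K_3$. Throughout I would assume $\Gamma$ has no isolated vertices, as is standard for line-graph statements, since isolated vertices leave $L(\Gamma)$ unchanged but would spoil the uniqueness of $\Gamma$.

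For the forward directions, suppose $L(\Gamma) \cong K_m$ with $m = \omega \geq 3$. Because the vertices of $L(\Gamma)$ are the edges of $\Gamma$, this says $\Gamma$ has exactly $\omega$ edges and every two of them share a vertex. The key idea is to examine a vertex $v$ of maximum degree $\Delta(\Gamma)$ and invoke Lemma \ref{lem:d(v)}. If $\Delta(\Gamma) > 2$, then by that lemma the $d(v)$ edges incident to $v$ form a \emph{maximal} clique in $L(\Gamma) \cong K_\omega$. Since the only maximal clique of $K_\omega$ is its entire vertex set, we get $d(v) = \omega$; hence every one of the $\omega$ edges is incident to $v$, and with no room for further edges (all $\omega$ are used at $v$) this forces $\Gamma \cong K_{1,\omega}$.

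It then remains to handle $\Delta(\Gamma) \le 2$, which I would dispatch by a short counting argument. Fixing an edge $e = ab$, every other edge must meet $a$ or $b$; with all degrees at most $2$ there is at most one additional edge at $a$ and one at $b$, so $\Gamma$ has at most three edges, i.e.\ $\omega \le 3$. In particular $\Delta(\Gamma) \le 2$ is impossible once $\omega > 3$, which completes the first claim. When $\omega = 3$ and $\Delta(\Gamma) \le 2$, I would verify directly that three pairwise-intersecting edges with no degree-$3$ vertex must close into a triangle, giving $\Gamma \cong K_3$; together with the star possibility from the previous paragraph this yields exactly $K_3$ or $K_{1,3}$.

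I expect the main obstacle to be the $\omega = 3$ bookkeeping rather than any deep argument: the maximality half of Lemma \ref{lem:d(v)} applies only when $d(v) > 2$, so the triangle cannot be produced by that lemma and must be extracted by hand. One must be careful that the degree-$2$ analysis genuinely forces a triangle and excludes stray configurations such as the path $P_4$, whose two end edges fail to intersect and so does not satisfy the pairwise-intersection hypothesis. The counting bound $\omega \le 3$ is what cleanly separates the $\omega > 3$ and $\omega = 3$ regimes and rules out all such alternatives.
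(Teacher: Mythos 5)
Your proposal is correct, but it takes a genuinely different route from the paper. The paper handles the forward directions (``$L(\Gamma) \cong K_\omega$ implies $\Gamma \cong K_{1,\omega}$ or, when $\omega = 3$, possibly $K_3$'') in one line by citing the Whitney Isomorphism Theorem \cite{whitney}: since $K_\omega \cong L(K_{1,\omega})$, Whitney's uniqueness result forces $\Gamma \cong K_{1,\omega}$ except for the classical exceptional pair $\{K_3, K_{1,3}\}$. You instead prove these directions from scratch: reading $L(\Gamma) \cong K_\omega$ as ``$\Gamma$ has $\omega$ pairwise-intersecting edges,'' splitting on $\Delta(\Gamma)$, using the maximality clause of Lemma \ref{lem:d(v)} to force $d(v) = \omega$ when $\Delta(\Gamma) > 2$ (since the only maximal clique of $K_\omega$ is everything), and disposing of $\Delta(\Gamma) \le 2$ by the counting bound $\omega \le 3$, with the triangle extracted by hand in the $\omega = 3$ case. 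Both arguments are sound; your degree-$2$ analysis does correctly force the triangle (two edges meeting at $a$ saturate $d(a)$, so the third edge must join the two free endpoints). What the paper's approach buys is brevity, at the cost of invoking a theorem far stronger than needed and whose connectivity hypothesis is left implicit. What your approach buys is a self-contained proof that reuses machinery the paper has just developed (Lemma \ref{lem:d(v)}), needs only the special case of a complete line graph, and makes explicit the no-isolated-vertices convention that both treatments silently require (since $L(K_{1,\omega} \cup K_1) \cong K_\omega$ as well). Either would serve the paper; yours integrates slightly better with Remark \ref{remark}, which is exactly about which structures create maximal cliques in $L(\Gamma)$.
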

\begin{proof}
    The graph $K_{1,\omega}$ has $\omega$ edges that all share a common end point, so $L(K_{1,\omega}) \cong K_\omega$ for $\omega \geq 3$. Also, it is clear that $L(K_3) \cong K_3$. The converse for both cases follows from the Whitney Isomorphism Theorem \cite{whitney}.
\end{proof}
\begin{remark} \label{remark}
    Lemma \ref{lem:whit} implies that the only structures that can create a maximal clique in $L(\Gamma)$ are the set of edges incident to a vertex $v$, i.e. cliques created by a vertex $v$ following from Lemma \ref{lem:d(v)}. Except in the case of a 3-clique in $L(\Gamma)$ which can also be created by a triangle, $K_3$.
\end{remark}
We will begin by classifying graphs that have $\omega$-clique regular line graphs and start with the case $\omega \geq 4$.
\begin{thm} \label{thm:4cr}
    Suppose $\omega \geq 4$ and $\Gamma$ is a graph. Then $L(\Gamma)$ is $\omega$-clique regular if and only if the degree of every vertex in $\Gamma$ is 1 or $\omega$.
\end{thm}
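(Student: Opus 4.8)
The plan is to reduce $\omega$-clique regularity of $L(\Gamma)$ to a purely local degree condition at each vertex of $\Gamma$, by counting the $\omega$-cliques through a fixed edge of $L(\Gamma)$. The engine of the whole argument is the structural fact, available from Lemma \ref{lem:whit}, that for $\omega \geq 4$ every $\omega$-clique of $L(\Gamma)$ is a \emph{star}: a set of $\omega$ edges of $\Gamma$ all sharing one common vertex.

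First I would establish this structural claim. Let $C$ be a clique of order $\omega \geq 4$ in $L(\Gamma)$ and let $\Gamma'$ be the subgraph of $\Gamma$ consisting of exactly the edges corresponding to the vertices of $C$ (together with their endpoints). Since these edges pairwise intersect, $L(\Gamma') \cong K_\omega$, so Lemma \ref{lem:whit} forces $\Gamma' \cong K_{1,\omega}$; hence the $\omega$ edges of $C$ all meet in a single common vertex, the apex of the star. This is exactly the content of Remark \ref{remark} once one notes that $\omega > 3$ rules out the triangle exception.

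Next I would count. A fixed edge of $L(\Gamma)$ is a pair $\{e_1, e_2\}$ of edges of $\Gamma$ meeting in a unique vertex $w$, say $e_1 = wx$ and $e_2 = wy$ with $x \neq y$. Any $\omega$-clique of $L(\Gamma)$ containing this edge is a star whose apex lies in $e_1 \cap e_2 = \{w\}$, so its apex is forced to be $w$; it therefore consists of $e_1$, $e_2$, and a choice of $\omega - 2$ of the remaining $d(w) - 2$ edges of $\Gamma$ incident to $w$. Thus the number of $\omega$-cliques through $\{e_1,e_2\}$ equals $\binom{d(w)-2}{\omega-2}$, a quantity depending only on $w$. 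Consequently $L(\Gamma)$ is $\omega$-clique regular precisely when $\binom{d(w)-2}{\omega-2} = 1$ for every vertex $w$ with $d(w) \geq 2$ (and $L(\Gamma)$ has at least one edge, i.e.\ some vertex of $\Gamma$ has degree at least $2$).

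Finally I would solve this binomial equation. Because $\omega - 2 \geq 2$, we have $\binom{m}{\omega-2} = 1$ if and only if $m = \omega - 2$: a smaller $m$ gives $0$, while any $m \geq \omega - 1$ gives at least $\binom{\omega-1}{\omega-2} = \omega - 1 \geq 3$. Hence the regularity condition is equivalent to $d(w) = \omega$ for every $w$ with $d(w) \geq 2$, which (under the standing assumption that $\Gamma$ has no isolated vertices) says exactly that every vertex of $\Gamma$ has degree $1$ or $\omega$. I expect the only genuine subtlety to lie in the bookkeeping around degenerate cases: vertices of degree $\leq 1$ contribute no edges to $L(\Gamma)$, and a graph all of whose degrees equal $1$ (a perfect matching) produces an edgeless, hence non-clique-regular, $L(\Gamma)$. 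Both directions should therefore be phrased so as to guarantee at least one vertex of degree $\omega$. All the real mathematical weight sits in the structural star lemma; everything following it is elementary counting.
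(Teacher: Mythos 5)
Your proof is correct, and it rests on the same structural pillar as the paper's: the fact (Remark \ref{remark}, which you rederive from Lemma \ref{lem:whit}) that for $\omega \geq 4$ every $\omega$-clique of $L(\Gamma)$ is the set of edges at a single vertex of $\Gamma$. What you do with that fact is organized genuinely differently. The paper argues the two directions separately and qualitatively: forward, each edge of $L(\Gamma)$ lies in the star-clique at its common vertex and in no other; backward, a vertex with $1 < d(v) < \omega$ yields an edge of $L(\Gamma)$ lying in no $\omega$-clique, while $d(v) > \omega$ yields a clique of order exceeding $\omega$, which rules out $\omega$-clique regularity. You instead prove one exact formula --- the number of $\omega$-cliques through an edge of $L(\Gamma)$ whose two underlying edges meet at $w$ is $\binom{d(w)-2}{\omega-2}$ --- and then solve $\binom{m}{\omega-2}=1$, so both directions drop out of a single equation. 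Your route costs a little binomial bookkeeping (including the monotonicity step $\binom{m}{\omega-2}\geq\binom{\omega-1}{\omega-2}=\omega-1\geq 3$ for $m\geq\omega-1$, which you supply), but it is more quantitative, recovering the paper's cases as the values $0$, $1$, and $\geq \omega-1$ of the count. It also surfaces something the paper passes over in silence: since the paper's definition of clique regularity demands a nonempty edge set, a perfect matching satisfies the degree condition while $L(\Gamma)$ is edgeless, and a graph with an isolated vertex can have an $\omega$-clique regular line graph while violating the degree condition; so the statement as literally written needs exactly the provisos you flag (no isolated vertices, at least one vertex of degree $\omega$), and your bookkeeping is the more careful of the two on this point.
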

\begin{proof}
    First assume that the degree of every vertex in $\Gamma$ is 1 or $\omega$. Let $e=\{xv, yv\}$ be an edge in $L(\Gamma)$. Then the vertex $v$ in $\Gamma$ has degree $\omega$ and so by Lemma \ref{lem:d(v)} $e$ is in the maximal $\omega$-clique created by $v$, and by Remark \ref{remark}, this is the unique $\omega$-clique $e$ is in.\\
    For the inverse, assume there exists a vertex $v$ in $\Gamma$ with degree not equal to either 1 or $\omega$. If $1 < d(v) < \omega$, then there exists edges $e_1$ and $e_2$ incident to $v$ in $\Gamma$. So the edge $e_1e_2$ in $L(\Gamma)$ is in the maximal clique created by $v$ of order less than $\omega$ and $e_1$ and $e_2$ share no other endpoints in $\Gamma$ so by Remark \ref{remark}, the edge $e_1e_2$ is in no clique of order $\omega$. If $d(v) > \omega $, then there exists a clique or order greater than $\omega$ in $L(\Gamma)$. So in either case $L(\Gamma)$ is not $\omega$-clique regular.
\end{proof}
The case of the $3$-clique regular line graph is more complicated because\linebreak $L(K_3) \cong K_3 \cong L(K_{1,3})$.
\begin{thm} \label{thm:3cr}
    Suppose $\Gamma$ is a graph with connected components $C_1, C_2, \ldots , C_n$. Then $L(\Gamma)$ is $3$-clique regular if and only if whenever $C_i \not \cong K_3$ , $C_i$ is triangle free and the degree of every vertex in $C_i$ is 1 or $3$. 
\end{thm}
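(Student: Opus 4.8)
The plan is to convert the $3$-clique regularity of $L(\Gamma)$ into a purely local count on $\Gamma$ and then read off the component structure. First I would record the two ways a triangle can arise in $L(\Gamma)$: three edges of $\Gamma$ that are pairwise adjacent as vertices of $L(\Gamma)$ must either share a common endpoint (a star at some vertex of degree $\geq 3$, i.e. a clique created by a vertex as in Lemma \ref{lem:d(v)}) or be the three edges of a triangle $K_3$ in $\Gamma$. This is a direct check on which third edge can be adjacent to two edges sharing a vertex, and it is the $3$-clique refinement of the maximal-clique description in Remark \ref{remark}. A generic edge of $L(\Gamma)$ is an unordered pair of distinct edges $e_1 = ab$, $e_2 = ac$ of $\Gamma$ meeting at a common vertex $a$ with $b \neq c$ (two distinct edges meet in at most one vertex, so $a$ is well defined).

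The key computation is to count the triangles of $L(\Gamma)$ through this edge. A third edge $e_3$ completing a triangle must be adjacent to both $e_1$ and $e_2$; by the dichotomy above it is either another edge $ad$ at $a$, of which there are $d(a) - 2$, or the edge $bc$, which exists precisely when $b$ and $c$ are adjacent in $\Gamma$. Hence the number of triangles through $\{e_1,e_2\}$ equals $(d(a) - 2) + [\,bc \in E\,]$, where the bracket is $1$ if $bc$ is an edge and $0$ otherwise. Three-clique regularity demands that this equal $1$ for every such edge of $L(\Gamma)$, i.e. for every vertex $a$ and every pair of distinct neighbors $b, c$. Setting the expression equal to $1$ immediately forces three local conditions: no vertex has degree $\geq 4$; every degree-$2$ vertex has its two neighbors adjacent (so it lies in a triangle); and every degree-$3$ vertex has an independent neighborhood (so it lies in no triangle). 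Vertices of degree $\leq 1$ impose no constraint.

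From here I would assemble these per-vertex conditions into the stated component dichotomy, working one connected component $C_i$ at a time: since edges in different components of $\Gamma$ share no endpoints, $L(\Gamma)$ is the disjoint union of the $L(C_i)$ and every triangle lies inside a single $L(C_i)$. If $C_i$ contains a triangle $xyz$, then each of $x,y,z$ has two mutually adjacent neighbors, so none can have degree $3$ (independent neighborhood) or degree $\geq 4$; each therefore has degree exactly $2$ with neighbors the other two triangle vertices, which forces $C_i \cong K_3$. If instead $C_i$ is triangle-free, no vertex can meet the degree-$2$ condition, so $C_i$ has only degrees $1$ and $3$. This yields the forward direction. For the converse I would reverse the computation: if $C_i \cong K_3$ the shared vertex $a$ has degree $2$ with $bc \in E$, giving the count $0 + 1 = 1$; if $C_i$ is triangle-free with degrees $1$ and $3$, then any vertex $a$ carrying two neighbors has degree $3$ and $bc \notin E$, giving $1 + 0 = 1$. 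In both cases every edge of $L(\Gamma)$ lies in a unique triangle.

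I expect the main obstacle to be bookkeeping around degenerate components and the nonempty-edge-set clause in the definition of clique regularity, rather than any deep difficulty, since the core argument reduces to the clean local count above. In particular, a component isomorphic to $K_2$ satisfies the right-hand condition yet contributes no edge to $L(\Gamma)$, so care is needed to guarantee $L(\Gamma)$ actually has an edge; this holds as soon as some component is $K_3$ or carries a degree-$3$ vertex, and I would flag the genuinely degenerate case in which every component is a single edge (and likewise exclude isolated vertices, whose degree $0$ is already disallowed by the right-hand condition). Handling these explicitly ensures the biconditional holds exactly on the range of $\Gamma$ for which $L(\Gamma)$ is a legitimate edge-bearing graph.
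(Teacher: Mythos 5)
Your proposal is correct, and it takes a genuinely different route from the paper's. The paper rests on the same star-versus-triangle dichotomy for triangles in $L(\Gamma)$ (its Remark \ref{remark}, built from Lemma \ref{lem:d(v)} and the Whitney theorem), but from there it argues structurally: in the forward direction it exhibits the unique maximal $3$-clique containing a given edge of $L(\Gamma)$, and in the converse it splits into cases (a triangle in a non-$K_3$ component, a vertex of degree greater than $3$, a vertex of degree $2$) and refutes each separately, the first by drawing an explicit subgraph of $\Gamma$ whose line graph has an edge lying in two triangles. You replace all of that case analysis with one quantitative criterion: the number of triangles of $L(\Gamma)$ through the edge $\{ab,ac\}$ is exactly $(d(a)-2)+[\,bc\in E\,]$, so $3$-clique regularity is equivalent to this count being $1$ for every vertex $a$ and every pair of distinct neighbors $b,c$, and the per-vertex conditions plus the component dichotomy then follow by bookkeeping. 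Your approach buys uniformity (no figures, no separate contradiction arguments) and, notably, more precision about degenerate cases: the two you flag are genuine, and the paper glosses over both. An isolated vertex of $\Gamma$ (e.g.\ $\Gamma \cong K_{1,3}\sqcup K_1$, whose line graph is $K_3$ and hence $3$-clique regular) makes the left side true and the right side false, and this is exactly the $d(v)=0$ case missing from the paper's converse, which only treats $d(v)>3$ and $d(v)=2$; a graph all of whose components are single edges makes the right side true while $L(\Gamma)$ has empty edge set and so fails the paper's own definition of clique regularity. What the paper's approach buys in exchange is economy within the section: the same Lemma/Remark machinery is reused verbatim in Theorems \ref{thm:4cr}, \ref{thm:4ci}, and \ref{thm:3ci}, whereas your counting formula is specific to $\omega=3$.
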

\begin{proof}
    First, assume that for each $C_i \not \cong K_3$, $C_i$ is triangle free and the degree of every vertex in $C_i$ is 1 or $3$. Let $e=\{xv,yv\}$ be an edge in $L(\Gamma)$ with vertex $v$ in connected component $C_k$. If $C_k \cong K_3$, then edge $e$ is in $L(C_k) \cong K_3$ a connected component of $L(\Gamma)$ so we are done. So assume $C_k \not \cong K_3$ implying by our assumption that the degree of $v$ is $3$. So the $3$-clique created by $v$ is maximal by Lemma \ref{lem:d(v)} and since $C_i$ is triangle free by our assumption, Remark \ref{remark} implies that this is the only $3$-clique containing edge $e$.\\
    Now assume the inverse, for some $C_i \not \cong K_3$, $C_i$ contains a triangle or $C_i$ contains a vertex with degree not equal to either $1$ or $3$. For the first case, since $C_i$ is connected, contains a triangle and is not $K_3$, Figure \ref{fig:sub1} is a sub graph of $\Gamma$ and therefore Figure \ref{fig:sub2} is a sub graph of $L(\Gamma)$. So $L(\Gamma)$ is not $3$-clique regular.
    \begin{figure}[H]
    \begin{subfigure}{.5\textwidth}
      \centering
      \includegraphics[width=.8\linewidth]{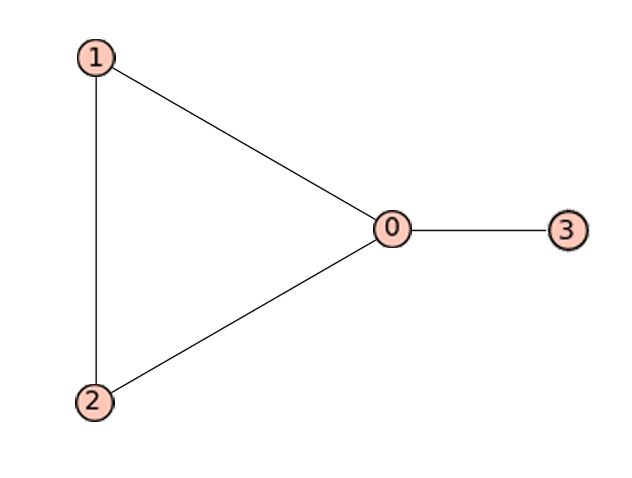}
      \caption{Subgraph of $\Gamma$}
      \label{fig:sub1}
    \end{subfigure}
    \begin{subfigure}{.5\textwidth}
      \centering
      \includegraphics[width=.8\linewidth]{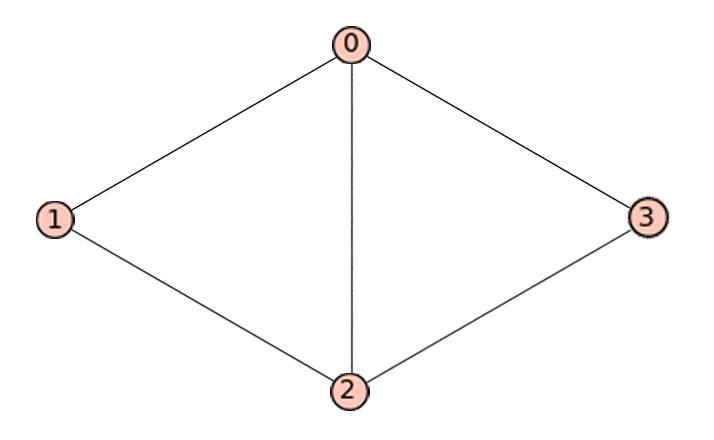}
      \caption{Subgraph of $L(\Gamma)$}
      \label{fig:sub2}
    \end{subfigure}
    \caption{}
    \end{figure}
So now for the second case, let $v$ be a vertex in $C_i$ with degree not equal to either $1$ or $3$. If $d(v) > 3$, then there exists a clique of order greater than $3$ in $L(\Gamma)$ so $L(\Gamma)$ is not $3$-clique regular. If $d(v)=2$, then let vertices $x$ and $y$ be adjacent to $v$. If $x$ and $y$ are adjacent, then $\{v,x,y\}$ is a triangle in $\Gamma$, a contradiction. The edge $e = \{xv,yv\}$ in $L(\Gamma)$ is then not in a $3$-clique induced by the edges incident to $v$ and not in a $3$-clique induced by a triangle in $\Gamma$. So by Remark \ref{remark}, edge $e$ is not in any $3$-clique thus $L(\Gamma)$ is not $3$-clique regular.
\end{proof}
Now we restrict our focus to connected graphs and will classify all that are isomorphic to the $\omega$-clique graph of their line graph. Again, we start with the case $\omega \geq 4$.
\begin{thm} \label{thm:4ci}
    Suppose $\omega \geq 4$ and $\Gamma$ is a connected graph. Then $C_\omega(L(\Gamma)) \cong \Gamma$ if and only if $\Gamma$ is $\omega$-regular.
\end{thm}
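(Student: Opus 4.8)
The plan is to prove both directions by first pinning down exactly what the $\omega$-cliques of $L(\Gamma)$ are. For $\omega\ge 4$ every $\omega$-clique of $L(\Gamma)$ sits inside a maximal clique, and by \Cref{lem:d(v)} together with \Cref{remark} every maximal clique of $L(\Gamma)$ is the set of $d(v)$ edges incident to a single vertex $v$ (the triangle exception of \Cref{remark} is irrelevant, since $\omega\ge 4>3$). Thus each $\omega$-clique $Q$ is a choice of $\omega$ edges incident to a common vertex, and that vertex is unique: if $\omega\ge 4$ distinct edges were all incident to two vertices $v\neq v'$, each would have to equal $vv'$, which is absurd. I would call this vertex the center of $Q$, so the $\omega$-cliques of $L(\Gamma)$ are counted by $\sum_v\binom{d(v)}{\omega}$ (a center $v$ together with an $\omega$-subset of its incident edges). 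The adjacency rule then reads: two $\omega$-cliques with distinct centers $v,v'$ meet iff the edge $vv'$ lies in both, i.e.\ iff $v\sim v'$ in $\Gamma$.

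For the ``if'' direction, suppose $\Gamma$ is $\omega$-regular. Every degree equals $\omega\ge 4$, so by \Cref{thm:4cr} the line graph $L(\Gamma)$ is $\omega$-clique regular; moreover each vertex $v$ supports exactly one $\omega$-clique (all $\omega$ edges at $v$), and these are all of them. Hence $v\mapsto\mathrm{star}(v)$ is a bijection from $V(\Gamma)$ onto the vertex set of $C_\omega(L(\Gamma))$, and by the adjacency rule above it preserves and reflects adjacency. This gives $C_\omega(L(\Gamma))\cong\Gamma$ directly.

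For the ``only if'' direction I would assume $C_\omega(L(\Gamma))\cong\Gamma$ and deduce $\omega$-regularity. The isomorphism first forces $n:=|V(\Gamma)|=\sum_v\binom{d(v)}{\omega}$. Writing $a,b,c$ for the numbers of vertices of degree $<\omega$, $=\omega$, $>\omega$, these contribute $0$, $1$, and $\ge\binom{\omega+1}{\omega}=\omega+1$ respectively, so $n\ge b+c(\omega+1)$ while $n=a+b+c$, giving $a\ge c\omega$. If $c=0$ then only degree-$\omega$ vertices contribute, so $n=b$; combined with $n=a+b$ this forces $a=0$, and $\Gamma$ is $\omega$-regular. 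It therefore remains to rule out $c\ge 1$, equivalently a vertex of degree $\Delta(\Gamma)\ge\omega+1$.

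When $\Delta:=\Delta(\Gamma)\ge\omega+2$ I would use a clique-number bound: fixing a vertex $v$ of degree $\Delta$ and an incident edge $e$, the $\binom{\Delta-1}{\omega-1}$ $\omega$-cliques centered at $v$ and containing $e$ pairwise intersect, so they form a clique in $C_\omega(L(\Gamma))$; hence $\omega(\Gamma)=\omega(C_\omega(L(\Gamma)))\ge\binom{\Delta-1}{\omega-1}$, which contradicts $\omega(\Gamma)\le\Delta+1$ because for $\Delta\ge\omega+2\ge 6$ one has $\binom{\Delta-1}{\omega-1}\ge\binom{\Delta-1}{2}>\Delta+1$. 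The remaining case $\Delta=\omega+1$ is the step I expect to be the main obstacle, since here the elementary clique-number bound only yields a $K_{\omega+1}$ and is off by one. I would resolve it by combining two observations. First, a maximum-degree count in $C_\omega(L(\Gamma))$: an $\omega$-clique centered at a degree-$(\omega+1)$ vertex $v$ already has $\omega$ co-centered neighbors, and a neighbor $w$ of $v$ with $d(w)\ge\omega$ contributes $\binom{d(w)-1}{\omega-1}$ more; since $\Delta(C_\omega(L(\Gamma)))=\omega+1$, no two degree-$(\omega+1)$ vertices are adjacent and each degree-$(\omega+1)$ vertex has at most one neighbor of degree $\omega$. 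Second, a clique-image count: the $\omega+1$ cliques centered at a degree-$(\omega+1)$ vertex pairwise intersect, so $C_\omega(L(\Gamma))$, and hence $\Gamma$, contains $K_{\omega+1}$; analyzing such a copy in the connected graph $\Gamma$ (which has $n\ge\omega+2$ vertices) shows it must contain exactly one degree-$(\omega+1)$ vertex whose other $\omega$ clique-neighbors all have degree $\omega$. As $\omega\ge 4>1$, this contradicts the first observation, eliminating the case $\Delta=\omega+1$ and completing the argument.
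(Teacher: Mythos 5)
Your proposal is correct, and while its skeleton matches the paper's proof (the same star bijection for the ``if'' direction, and for the ``only if'' direction the same split into ruling out $\Delta(\Gamma)\geq\omega+2$, ruling out $\Delta(\Gamma)=\omega+1$, then a vertex count), your handling of the case $\Delta(\Gamma)\geq\omega+2$ is genuinely different and cleaner. The paper bounds the degree in $C_\omega(L(\Gamma))$ of an $\omega$-clique inside the star of a maximum-degree vertex, which requires an induction on $\Delta$ to verify $\binom{\Delta}{\omega}-\binom{\Delta-\omega}{\omega}-1>\Delta$; you instead exhibit the $\binom{\Delta-1}{\omega-1}$ pairwise-intersecting $\omega$-cliques through a fixed edge at $v$ and play the resulting clique of $C_\omega(L(\Gamma))$ against the elementary bound $\omega(\Gamma)\leq\Delta(\Gamma)+1$, which settles the case with a one-line inequality and no induction. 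Your $\Delta=\omega+1$ case is, underneath the packaging, the paper's argument: both find a $K_{\omega+1}$ in $\Gamma$ via the $\omega+1$ pairwise-intersecting cliques centered at a degree-$(\omega+1)$ vertex, both use connectivity to locate a degree-$(\omega+1)$ vertex $u$ inside that copy, and both reach a contradiction by counting cliques adjacent to a well-chosen clique through shared edges (the paper gets $d(c_k)\geq 2\omega$ in one computation; you obtain the same count split across your two ``observations''). Your closing count $n=\sum_v\binom{d(v)}{\omega}$ is a mild generalization of the paper's $d_\omega=\sum_i d_i$. One wording slip worth fixing: your adjacency rule ``distinct centers $v,v'$ meet iff $v\sim v'$'' is valid only when both cliques are full stars (centers of degree exactly $\omega$), since a clique centered at a vertex of larger degree may omit the edge $vv'$; the correct general statement is the one you wrote first (they meet iff $vv'$ lies in both), and since that is the version you actually apply in the ``only if'' direction, nothing in the argument breaks.
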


\begin{proof}
    First assume $\Gamma$ is $\omega$-regular.\par
    From Lemma \ref{lem:d(v)} there is an injective mapping from the vertices in $\Gamma$ to the $\omega$-cliques their edges induce in $L(\Gamma)$. From Remark \ref{remark} we know that the cliques created by each vertex in $\Gamma$ are the only $\omega$-cliques in $\Gamma$. So the injective mapping is a bijection from the vertices in $\Gamma$ to the vertices in $C_\omega(L(\Gamma))$. We will show this bijection preserves adjacency. If vertices $v$ and $u$ are adjacent in $\Gamma$, then the cliques created by $v$ and by $u$ share the vertex $vu$ in $L(\Gamma)$. So in $C_\omega(L(\Gamma))$ these two cliques are adjacent. If $v$ and $u$ are not adjacent, then their cliques in $L(\Gamma)$ don't share a vertex so they are not adjacent in $C_\omega(L(\Gamma))$. Thus it follows $C_\omega(L(\Gamma)) \cong \Gamma$.\par
    Now assume that $\Gamma \cong C_\omega(L(\Gamma))$.\\
    We will begin by showing $\Delta(\Gamma) \leq \omega$ if $\omega \geq 3$, (we will reference this argument again in Theorem \ref{thm:3ci}).
    First, assume for contradiction $\Delta(\Gamma) \geq \omega +2$ and let $v$ be a vertex in $\Gamma$ with degree $\Delta=\Delta(\Gamma)$. Observe that the number of $\omega$-cliques in the $\Delta$-clique created by $v$ in $L(\Gamma)$ is $\binom{\Delta}{\omega}$. Let $c_k$ be one of these $\omega$-cliques and we will show that the degree of $c_k$ in $C_\omega(L(\Gamma))$ is at least $\binom{\Delta}{\omega}-\binom{\Delta-\omega}{\omega}-1$. In the $\Delta$-clique created by $v$, $c_k$ is only non-adjacent with another clique if they don't share any vertices. There are $\Delta-\omega$ vertices in the $\Delta$-clique created by $v$ that are not in $c_k$ so there are $\binom{\Delta-\omega}{\omega}$ other $\omega$-cliques not adjacent to $c_k$. Implying that $c_k$ is adjacent to $\binom{\Delta}{\omega}-\binom{\Delta-\omega}{\omega}-1$ other $\omega$-cliques in the $\Delta$-clique created by $v$. So $d(c_k) \geq \binom{\Delta}{\omega}-\binom{\Delta-\omega}{\omega}-1$. By induction on $\Delta \geq \omega+2$, we will show that $\binom{\Delta}{\omega}-\binom{\Delta-\omega}{\omega}-1 > \Delta$ which implies $d(c_k) > \Delta$ contradicting $C_\omega(L(\Gamma)) \cong \Gamma$. For the base case let $\Delta=\omega+2$. Since $\omega \geq 3$ then $\binom{\Delta-\omega}{\omega}=\binom{2}{\omega}=0$. So
    \begin{align*}
        \frac{\omega+1}{2} &\geq 2\\
        \frac{(\omega+2)(\omega+1)}{2} -1 & \geq 2\omega+3 >\omega+2\\
        \binom{\omega+2}{\omega}-\binom{2}{\omega}-1 &>\omega+2.
    \end{align*}
    For the inductive step, assume $\binom{\Delta}{\omega}-\binom{\Delta-\omega}{\omega}-1>\Delta$ for some $\Delta\geq \omega+2$ and we will show $\binom{\Delta+1}{\omega}-\binom{\Delta+1-\omega}{\omega}-1>\Delta+1$. Recall that for any positive integers $a$ and $b$, $\binom{a+1}{b}=\binom{a}{b}+\binom{a}{b-1}$. Then since $\binom{\Delta}{\omega-1} > \binom{\Delta-\omega}{\omega-1}$ implies $\binom{\Delta}{\omega-1} - \binom{\Delta-\omega}{\omega-1} >0$ we get,
    \[\binom{\Delta+1}{\omega}-\binom{\Delta+1-\omega}{\omega}-1 > \binom{\Delta}{\omega}-\binom{\Delta-\omega}{\omega}-1 \geq \Delta+1.\]
    Now for contradiction, assume $\Delta(\Gamma) = \omega +1$ and let $v$ be a vertex in $\Gamma$ with degree $\omega +1$. Then $v$ creates a $(\omega +1)$-clique in $L(\Gamma)$ and observe that every $\omega$-clique in the $(\omega+1)$-clique created by $v$ is adjacent to every other $\omega$-clique. So there is a $(\omega+1)$-clique in $C_\omega(L(\Gamma))$ since the number of $\omega$-cliques in a $(\omega+1)$-clique is $\omega+1$. Because of the assumed isomorphism, there must be a $(\omega+1)$-clique in $\Gamma$. Then since $\Delta(\Gamma)=\omega+1$ and $\Gamma$ is connected, then at least one of the vertices in this $(\omega+1)$-clique has degree $\omega+1$, call this vertex $u$. Consider the set of edges in the $(\omega+1)$-clique incident with $u$. This set induces an $\omega$-clique in $L(\Gamma)$ we will call $c_k$ which we will show has degree in $C_\omega(L(\Gamma))$ greater than $\omega +1$. Since the other endpoint of every edge in $c_k$ has degree at least $\omega$, they all create at least one $\omega$-clique in $L(\Gamma)$ which is adjacent to $c_k$. Since $u$ has degree $\omega+1$ in $\Gamma$, it creates $\omega$ other $\omega$-cliques in $L(\Gamma)$ also adjacent to $c_k$. So $d(c_k) \geq 2\omega > \omega +1 = \Delta(\Gamma)$ since $\omega \geq 3$, contradicting the isomorphism.\\
    Let $d_i$ denote the number of vertices in $\Gamma$ with degree $i$ for $1 \leq i \leq \omega$. By Remark \ref{remark} and since $\Delta(\Gamma) \leq \omega$, we know the number of vertices in $C_\omega(L(\Gamma))$ equals $d_\omega$. We also know the number of vertices in $\Gamma$ is $\sum_{i=1}^\omega d_i$. Because these graphs are isomorphic, we have $d_\omega = \sum_{i=1}^\omega d_i$ implying $d_i = 0$ for all $i\neq \omega$. Thus $\Gamma$ is $\omega$-regular.
\end{proof}
Once again, the case of the $3$-clique graph of a line graph is much more complicated even when restricting to connected graphs.
\begin{thm} \label{thm:3ci}
    Suppose $\Gamma$ is a connected graph. Then $C_3(L(\Gamma)) \cong \Gamma$ if and only if:\\
           \hspace*{0.5cm} (1) the degree of every vertex in $\Gamma$ is $2$ or $3$,  \\
           \hspace*{0.5cm} (2) every degree $2$ vertex in $\Gamma$ is contained in a triangle, \\
           \hspace*{0.5cm} (3) every triangle in $\Gamma$ contains exactly one degree $2$ vertex, and \\
           \hspace*{0.5cm} (4) distinct triangles in $\Gamma$ share no vertices.\vspace{0.2cm} 
\end{thm}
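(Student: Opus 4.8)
The plan is to prove both implications, using throughout the description of the $3$-cliques of $L(\Gamma)$ supplied by Lemma~\ref{lem:d(v)} and Remark~\ref{remark}. By Remark~\ref{remark}, when $\Delta(\Gamma)\le 3$ every maximal clique of $L(\Gamma)$ has order at most $3$, so $L(\Gamma)$ contains no $4$-clique and hence every $3$-clique of $L(\Gamma)$ is maximal; thus, again by Remark~\ref{remark}, each $3$-clique is either the star $S_v$ created by a degree-$3$ vertex $v$ or the triangle-clique $K_3(T)$ coming from a triangle $T$ of $\Gamma$. These two families are disjoint and biject with the degree-$3$ vertices and the triangles of $\Gamma$, so $|V(C_3(L(\Gamma)))| = d_3 + t$, where $d_i$ denotes the number of degree-$i$ vertices and $t$ the number of triangles. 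This normal form drives both directions, since condition (1) gives $\Delta(\Gamma)\le 3$ in the first direction and the reused argument below gives it in the second.

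First I would treat the direction assuming (1)--(4), by exhibiting an explicit isomorphism $\phi\colon V(\Gamma)\to V(C_3(L(\Gamma)))$ sending each degree-$3$ vertex $v$ to $S_v$ and each degree-$2$ vertex $w$ to $K_3(T_w)$, where $T_w$ is the triangle containing $w$. Conditions (2) and (4) make $T_w$ unique, so $\phi$ is well defined, and condition (3) pairs each triangle with its unique degree-$2$ vertex, so $\phi$ is a bijection onto the star/$K_3$ normal form. Adjacency preservation I would check by cases on the two endpoints: two degree-$3$ vertices $u,v$ are adjacent iff $S_u,S_v$ share the edge $uv$; a degree-$3$ vertex $v$ is adjacent to a degree-$2$ vertex $w$ iff $v$ is one of the two degree-$3$ vertices of $T_w$, which is exactly when $S_v$ and $K_3(T_w)$ share an edge of $T_w$; and two degree-$2$ vertices are never adjacent (condition (3)) while their triangles are vertex-disjoint (condition (4)), so $K_3(T_w)$ and $K_3(T_{w'})$ share no edge. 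Each case is short and mechanical.

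For the converse I assume $\Gamma$ is connected with $C_3(L(\Gamma))\cong\Gamma$. The first step is to import the argument from the proof of Theorem~\ref{thm:4ci} (explicitly flagged there for reuse) to conclude $\Delta(\Gamma)\le 3$, which puts us in the normal form above and yields the vertex-count identity $t = d_1 + d_2$. The heart of the argument is a degree computation in $C_3(L(\Gamma))$: for a degree-$3$ vertex $v$ the degree of $S_v$ equals $\delta_v + \tau_v$, where $\delta_v$ is the number of degree-$3$ neighbors of $v$ and $\tau_v$ the number of triangles through $v$; and for a triangle $T$ the degree of $K_3(T)$ equals $\sigma_T + \rho_T$, where $\sigma_T$ counts the degree-$3$ vertices of $T$ and $\rho_T$ the triangles sharing an edge with $T$. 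Because the isomorphism forces the entire degree sequence of $C_3(L(\Gamma))$ to coincide with that of $\Gamma$ — so every such quantity is $\le 3$ and the number $D_j$ of cliques of degree $j$ equals $d_j$ — I would combine these constraints with $t = d_1 + d_2$ to eliminate degree-$1$ vertices, force every degree-$2$ vertex into a unique triangle, and pin each triangle to exactly two degree-$3$ vertices with no two triangles meeting, i.e. conditions (1)--(4).

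The main obstacle is precisely this converse casework, because the crude bound ``every clique-degree is $\le 3$'' is by itself insufficient; the real leverage comes from matching the full degree sequences. The illustrative trap is the diamond $K_4$ minus an edge, which has $\Delta\le 3$ and all clique-degrees $\le 3$, yet satisfies $C_3(L(K_4-e))\cong K_4$ because all four $3$-cliques share the central edge; the contradiction surfaces only on noting that $K_4$ has degree sequence $(3,3,3,3)$ while $K_4-e$ has $(3,3,2,2)$. I therefore expect the delicate part to be the bookkeeping that converts each forbidden local configuration — two triangles sharing a vertex, a triangle with the wrong number of degree-$2$ vertices, or a degree-$2$ vertex lying in no triangle — into a verifiable clash between the counts $D_1=d_1,\ D_2=d_2,\ D_3=d_3$ and the identity $t=d_1+d_2$, handled over the finitely many local neighborhoods permitted by $\Delta(\Gamma)\le 3$.
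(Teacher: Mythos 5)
Your first direction is correct and is essentially the paper's own argument: the same star/triangle classification of the $3$-cliques of $L(\Gamma)$ via Lemma~\ref{lem:d(v)} and Remark~\ref{remark}, the same bijection, and the same three adjacency cases. The genuine gap is in the converse, which is the hard half of the theorem: you never actually eliminate any forbidden configuration; you only assert that ``bookkeeping'' with the counts $D_j=d_j$ and $t=d_1+d_2$ will do it. The configuration your plan cannot dispose of locally is a triangle $T=\{a,b,c\}$ whose three vertices all have degree $3$ (the paper's $t_0$ case). It passes every check you list: your degree bound itself forces the third neighbors $a',b',c'$ to have degree at most $2$ and to lie in no triangle (otherwise $\deg(S_a)\ge 4$ or $\deg(K_3(T))\ge 4$), and then $\deg(S_a)=\deg(S_b)=\deg(S_c)=2+1=3$ and $\deg(K_3(T))=3+0=3$, so all clique-degrees equal $3$ and no clash with the degree sequence is visible in any bounded neighborhood. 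Eliminating this case requires a genuinely global step, so ``finitely many local neighborhoods permitted by $\Delta(\Gamma)\le 3$'' is precisely the wrong frame, and your proposal contains no candidate for that step.

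The paper's global step is not a degree-sequence argument at all: $S_a$, $S_b$, $S_c$, $K_3(T)$ are pairwise adjacent, so $K_4\subseteq C_3(L(\Gamma))\cong\Gamma$, and connectivity together with $\Delta(\Gamma)\le 3$ forces $\Gamma\cong K_4$, which is refuted outright since $C_3(L(K_4))$ has $8$ vertices; it disposes of a triangle with three degree-$2$ vertices similarly ($\Gamma\cong K_3$), and only then runs a counting argument, in a form sharper than anything in your sketch: each degree-$2$ vertex lies in at most one triangle, so $d_2\ge t_1+2t_2$, which against $t_1+t_2=d_1+d_2$ gives $-t_2\ge d_1$, hence $t_2=d_1=0$, yielding (1)--(3), with (4) reduced back to the $K_4$ contradiction. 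Your scheme could in fact be completed without the subgraph argument, but only by an equally global count that you would have to supply: every vertex of a $t_0$-triangle sends its single external edge to a vertex that is degree $1$, or degree $2$ and in no triangle, so $3t_0\le d_1+2D_2$ where $D_2$ counts degree-$2$ vertices lying in no triangle, while the vertex-count identity gives $t_0= d_1+t_2+2t_3+D_2\ge d_1+D_2$, forcing $t_0=0$. Some argument of this kind is the crux of the converse, and its absence is a missing proof, not deferred bookkeeping.
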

The simplest example of a graph with this property is shown in Figure \ref{fig:iso_tri1} and its line graph is shown in Figure \ref{fig:iso_tri_line1}. Observe the bijective adjacency preserving mapping between the vertices of Figure \ref{fig:iso_tri1} and the triangles of Figure \ref{fig:iso_tri_line1}.
\begin{figure}[H]
    \begin{subfigure}{.5\textwidth}
      \centering
      \includegraphics[width=1\linewidth]{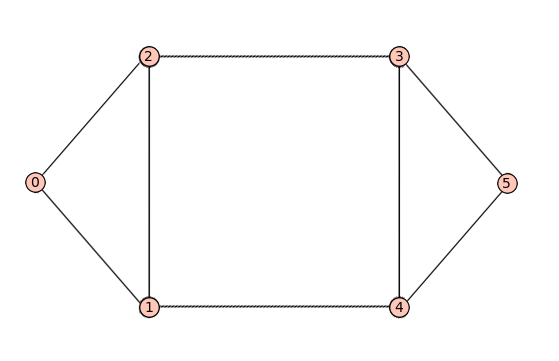}
      \caption{A graph satisfying (1), (2), (3), and (4)}
      \label{fig:iso_tri1}
    \end{subfigure}
    \begin{subfigure}{.5\textwidth}
      \centering
      \includegraphics[width=1\linewidth]{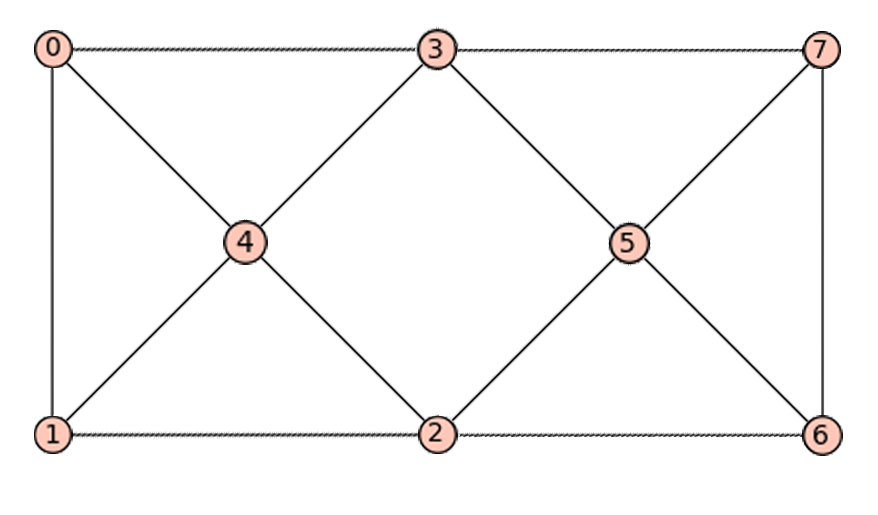}
      \caption{The line graph of (a)}
      \label{fig:iso_tri_line1}
    \end{subfigure}
    \label{fig:tri_subgraph1}
    \caption{}
    \end{figure}
\begin{proof}
    Begin by assuming (1), (2), (3), and (4). From these, observe that each triangle and each degree two vertex in $\Gamma$ must exist in a subgraph isomorphic to Figure \ref{fig:iso_tri}, which has line graph isomorphic to Figure \ref{fig:iso_tri_line}.
        \begin{figure}[H]
    \begin{subfigure}{.5\textwidth}
      \centering
      \includegraphics[width=.8\linewidth]{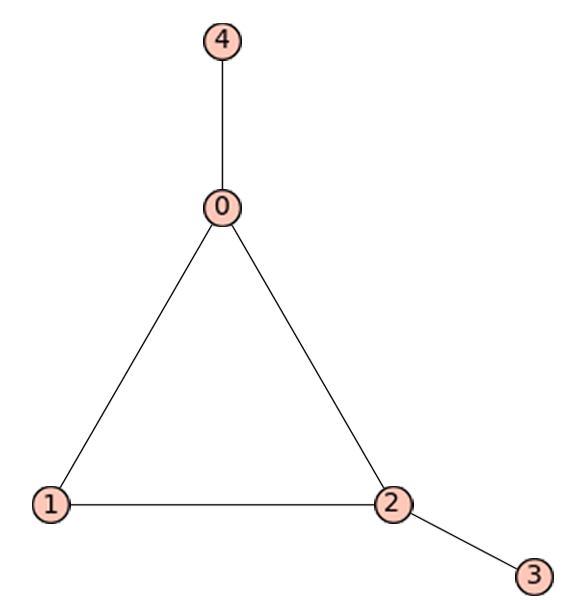}
      \caption{Subgraph of $\Gamma$}
      \label{fig:iso_tri}
    \end{subfigure}
    \begin{subfigure}{.5\textwidth}
      \centering
      \includegraphics[width=.75\linewidth]{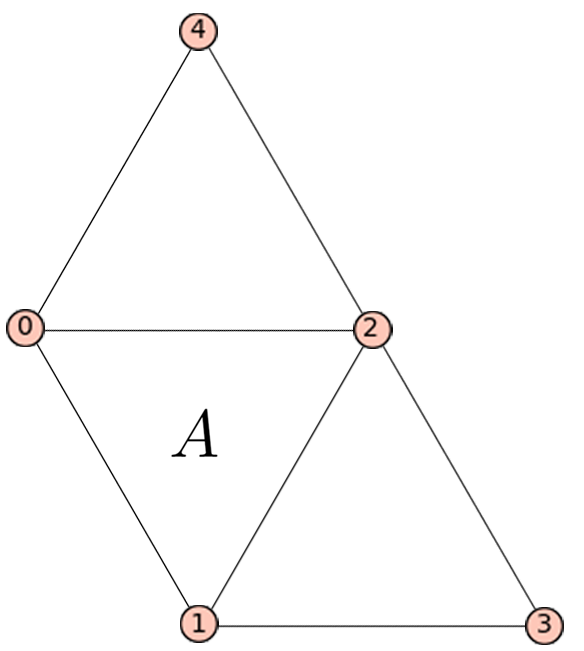}
      \caption{Subgraph of $L(\Gamma)$}
      \label{fig:iso_tri_line}
    \end{subfigure}
    \caption{}
    \end{figure}
    We define the mapping from the vertices of $\Gamma$ to the vertices of $C_3(L(\Gamma))$ by sending a degree $3$ vertex to the triangle it induces in $L(\Gamma)$ by Lemma \ref{lem:d(v)}, and sending a degree $2$ vertex to the triangle labeled $A$ in Figure \ref{fig:iso_tri_line}; the triangle in $L(\Gamma)$ induced by the unique triangle in $\Gamma$ containing the degree $2$ vertex as implied by (2). Since by Remark \ref{remark}, the only triangles in $L(\Gamma)$ are those created by a degree $3$ vertex or those induced by a triangle in $\Gamma$ with unique degree $2$ vertex, this mapping is bijective. We will now show it preserves adjacency. Let $u$ and $v$ be vertices in $\Gamma$. If $u$ and $v$ both have degree 3, the vertices they map to in $C_3(L(\Gamma))$ are adjacent if and only if $u$ and $v$ are adjacent by the same argument as in Theorem \ref{thm:4ci}. If both vertices have degree $2$, then they cannot be adjacent since by (2), this would imply they are in the same triangle, contradicting (3). And the vertices they map to in $C_3(L(\Gamma))$ are not adjacent since this would imply that two edges in the distinct triangles containing $u$ and $v$ share a vertex, contradicting (4). So now assume WLOG that $d(u)=2$ and $d(v)=3$. If $u$ and $v$ are adjacent in $\Gamma$, then Figure \ref{fig:iso_tri} is a subgraph of $\Gamma$ with $u$ as the vertex $1$ and $v$ as the vertex $0$. So $v$ is in the triangle in $\Gamma$ containing $u$ and the triangles they induce in $L(\Gamma)$ share the vertex $uv$. If the vertices that $u$ and $v$ map to in $C_3(L(\Gamma))$ are adjacent, then the triangles induced by the triangle containing $u$ and the edges incident to $v$ share some vertex in $L(\Gamma)$. This means the triangle containing $u$ in $\Gamma$ has some edge incident to $v$, implying $u$ and $v$ are adjacent. Thus $C_3(L(\Gamma)) \cong \Gamma$.\\
    For the converse, assume that $C_3(L(\Gamma)) \cong \Gamma$. The same argument as in Theorem \ref{thm:4ci} shows that $\Delta(\Gamma) \leq 3$. So for $1\leq i \leq 3$, let $d_i$ denote the number of vertices of degree $i$, and for $0 \leq j \leq 3$ let $t_j$ denote the number of triangles with $j$ vertices of degree $2$. From Remark \ref{remark}, the number of vertices in $C_3(L(\Gamma))$ is given by $d_3 + \sum_{j=0}^3 t_j$. First we will show $t_0 = t_3 = 0$. Since $\Gamma$ is connected, if there exists a triangle with $3$ vertices of degree $2$ then $\Gamma \cong K_3$ but $C_3(L(K_3)) \cong K_1$, a contradiction. If there exists a triangle with no vertices of degree $2$, then Figure \ref{fig:tri_t3} is a subgraph of $\Gamma$ where vertices 3, 4 and 5 may or may not be distinct. So Figure \ref{fig:tri_t3_line} is a subgraph of $L(\Gamma)$.
            \begin{figure}[H]
    \begin{subfigure}{.5\textwidth}
      \centering
      \includegraphics[width=.8\linewidth]{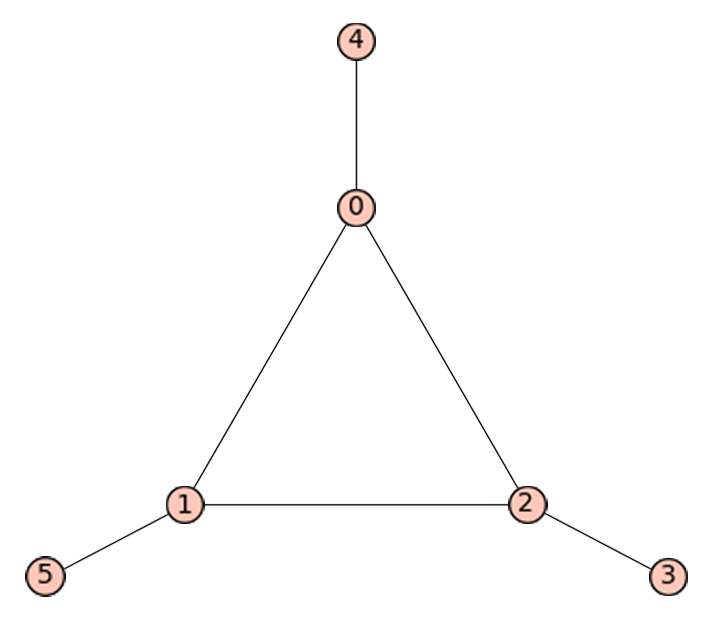}
      \caption{Subgraph of $\Gamma$}
      \label{fig:tri_t3}
    \end{subfigure}
    \begin{subfigure}{.5\textwidth}
      \centering
      \includegraphics[width=.8\linewidth]{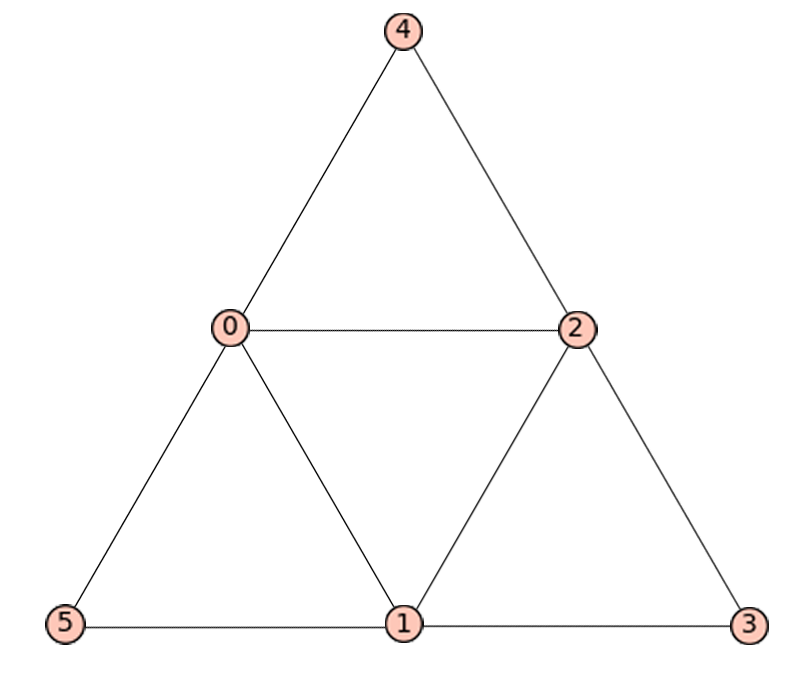}
      \caption{Subgraph of $L(\Gamma)$}
      \label{fig:tri_t3_line}
    \end{subfigure}
    \caption{}
    \end{figure}
    Figure \ref{fig:tri_t3_line} consists of four triangles with each pair sharing at least one vertex, so $K_4$ is a subgraph of $C_3(L(\Gamma)) \cong \Gamma$. Since $\Gamma$ is connected and $\Delta(\Gamma) \leq 3$, we must have $\Gamma \cong K_4$. But it can be verified that $C_3(L(K_4)) \not \cong K_4$, a contradiction.\\
    So we have $t_0=t_3=0$ implying the number of vertices in $C_3(L(\Gamma))$ is $d_3 + t_1 + t_2$ and since the number of vertices in $\Gamma$ is given by $d_1 + d_2 + d_3$, their isomorphism implies $t_1 + t_2 = d_1 + d_2$. A counting argument shows that $d_2 \geq t_1 + 2t_2$ implying 
    \begin{align*}
        t_1 + t_2 &= d_1 + d_2 \\
        t_1 + t_2 &\geq d_1 + t_1 + 2t_2 \\
        -t_2 &\geq d_1.
    \end{align*}
    Thus $t_2 = d_1 = 0$, since $0 \geq -t_2 \geq d_1 \geq 0$. This implies $(1)$ and $(3)$ and since $d_2 = t_1$, we have $(2)$ as well. To show (4), observe from (1), (2), and (3) that if two distinct triangles share vertices in $\Gamma$, Figure \ref{fig:line_tri_subgraph} must be a subgraph of $\Gamma$ and so Figure \ref{fig:disjoint_tri} is a subgraph of $L(\Gamma)$.
        \begin{figure}[H]
    \begin{subfigure}{.5\textwidth}
      \centering
      \includegraphics[width=.8\linewidth]{line_tri_subgraph.png}
      \caption{Subgraph of $\Gamma$}
      \label{fig:line_tri_subgraph}
    \end{subfigure}
    \begin{subfigure}{.5\textwidth}
      \centering
      \includegraphics[width=.6\linewidth]{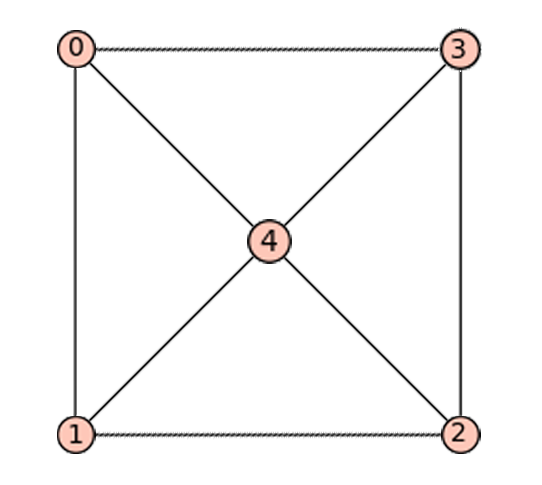}
      \caption{Subgraph of $L(\Gamma)$}
      \label{fig:disjoint_tri}
    \end{subfigure}
    \caption{}
    \end{figure}
    Figure \ref{fig:disjoint_tri} consists of four triangles all sharing vertex 5, so $K_4$ is a subgraph of $C_3(L(\Gamma))$. But as shown above, this leads to a contradiction so we get (4).
 \end{proof}
\subsection{Spectrum of Clique Graphs}
\tab Let $m$ be the number of $\omega$-cliques in an $\omega$-clique regular graph $\Gamma$ so that $m\binom{\omega}{2}$ is the number of edges in $\Gamma$. Let $A_C$ be the adjacency matrix of $C_\omega(\Gamma)$ with vertices indexed $(c_1, \ldots , c_{m})$ and $A_L$ be the adjacency matrix of $L(\Gamma)$ with vertices indexed $(e_1, \; \ldots, \;e_{m\binom{\omega}{2}})$ such that edges $\{e_{(i-1)\binom{\omega}{2} +1},\; \ldots, \; e_{i\binom{\omega}{2}}\}$ constitute clique $c_i$ for all $1 \leq i \leq m$, i.e. clique $c_1$ contains edges $e_1$ through $e_{\binom{\omega}{2}}$, clique $c_2$ contains edges $e_{\binom{\omega}{2}+1}$ through $e_{2\binom{\omega}{2}}$ and so on.
Now define $\tilde{A} = 6\binom{\omega}{3}I_{m} + (\omega -1)^2A_C$ and
define $\varphi : \mathbb{R}^{m} \rightarrow \mathbb{R}^{m\binom{\omega}{2}}$ by 
\[\varphi\left(\begin{bmatrix}a_1\\a_2\\\vdots\\a_{m}
\end{bmatrix}\right)=\begin{bmatrix}\Vec{a}_1\\\Vec{a}_2\\\vdots\\\Vec{a}_{m}\end{bmatrix} \text{ where } \:\ \Vec{a}:=
    \begin{bmatrix}a\\\vdots\\a\end{bmatrix} \in \mathbb{R}^{\binom{\omega}{2}}.\]
So the $\varphi$ function expands a column vector by a factor of $\binom{\omega}{2}$. For example, if $m=2$ and $\omega = 3$ then $\varphi\left(\begin{bmatrix}1\\2\end{bmatrix}\right)=\begin{bmatrix}1 &1&1&2&2&2\end{bmatrix}^T$.
\begin{lem} \label{lem:phi}
If $\Gamma$ is $\omega$-clique regular with $\tilde{A},\; \varphi$, and $A_L$ as above, then for all $v \in \mathbb{R}^{m}$, \[v^T \tilde{A}v = \varphi(v)^T A_L \varphi(v).\]
\end{lem}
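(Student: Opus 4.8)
The plan is to expand both sides as explicit sums and match them term by term. Writing $v = (a_1, \ldots, a_m)^T$, the left-hand side is
\[v^T \tilde{A} v = 6\binom{\omega}{3}\, v^T v + (\omega-1)^2\, v^T A_C v,\]
where $v^T v = \sum_i a_i^2$ and $v^T A_C v = 2\sum_{c_i \sim c_j} a_i a_j$, the latter sum ranging over unordered pairs of adjacent cliques. For the right-hand side, I would observe that $\varphi(v)$ assigns to the coordinate indexed by edge $e_p$ the value $a_{i(p)}$, where $i(p)$ denotes the index of the unique $\omega$-clique containing $e_p$. Hence
\[\varphi(v)^T A_L \varphi(v) = \sum_{\substack{p \neq q \\ e_p \cap e_q \neq \emptyset}} a_{i(p)}\, a_{i(q)},\]
a sum over ordered pairs of distinct edges sharing a vertex. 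The whole argument then amounts to splitting this sum according to whether $e_p$ and $e_q$ lie in the same $\omega$-clique or in two different ones.

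First I would handle the same-clique contribution. If $e_p$ and $e_q$ both belong to clique $c_i$, each ordered pair contributes $a_i^2$, so I must count the ordered pairs of distinct edges of $K_\omega$ that share a vertex. Counting by their common vertex gives $\omega\binom{\omega-1}{2}$ unordered such pairs, and the identity $\omega\binom{\omega-1}{2} = 3\binom{\omega}{3}$ turns this into $6\binom{\omega}{3}$ ordered pairs. This reproduces exactly $6\binom{\omega}{3}\sum_i a_i^2 = 6\binom{\omega}{3}\, v^T v$, matching the first term on the left.

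The crux is the different-clique contribution, and this is where the $\omega$-clique regular hypothesis does the real work. Because every edge lies in a unique $\omega$-clique, two distinct $\omega$-cliques cannot share an edge, so they meet in at most one vertex; thus any two adjacent cliques $c_i, c_j$ share exactly one vertex $w$. The key step, which I expect to be the heart of the argument, is that an edge $e_p \in c_i$ and an edge $e_q \in c_j$ can then share a vertex only at $w$, since $w$ is the cliques' sole common vertex. This pins the intersection structure down completely: each clique has exactly $\omega - 1$ edges incident to $w$, giving $(\omega-1)^2$ ordered pairs with $i(p)=i,\ i(q)=j$ and another $(\omega-1)^2$ with the roles reversed, for a total of $2(\omega-1)^2 a_i a_j$ per adjacent pair $\{i,j\}$. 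Summing over all adjacent pairs yields $(\omega-1)^2 \cdot 2\sum_{c_i \sim c_j} a_i a_j = (\omega-1)^2\, v^T A_C v$, matching the second term. Adding the two contributions gives $\varphi(v)^T A_L \varphi(v) = 6\binom{\omega}{3}\, v^T v + (\omega-1)^2\, v^T A_C v = v^T \tilde{A} v$, completing the proof.
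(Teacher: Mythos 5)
Your proof is correct and is essentially the paper's argument in different clothing: where you expand the quadratic form directly over ordered pairs of edges and split it by same clique versus different cliques, the paper reduces by bilinearity to standard basis vectors and computes the same block sums of $A_L$ (diagonal blocks summing to $6\binom{\omega}{3}$, adjacent-clique blocks to $(\omega-1)^2$, disjoint-clique blocks to $0$), using the same key consequence of clique regularity, namely that two adjacent cliques meet in exactly one vertex through which all cross-incidences pass. The only differences are presentational, e.g.\ you count the same-clique pairs by their common vertex via $\omega\binom{\omega-1}{2}=3\binom{\omega}{3}$, while the paper counts $2(\omega-2)$ per row over $\binom{\omega}{2}$ rows.
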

\begin{proof}
    Let $t_i$ be the standard basis vector of $\mathbb{R}^{m}$ with a $1$  in the $i^{\text{th}}$ position for $1 \leq i \leq m$.
    It is sufficient to show that for all $t_i$ and $t_j$
    \[t_i^T \tilde{A}t_j = \varphi(t_i)^T A_L \varphi(t_j).\]
    Observe that $\varphi(t_i)^T A_L \varphi(t_j)$ is the $1\times1$ matrix with entry as the sum of the values of the sub-matrix of $A_L$, $S_{ij}$, created by taking the rows corresponding to edges in $c_i$ and columns corresponding to edges in $c_j$.\\
    Similar to above, it is clear that $t_i^T \tilde{A}t_j$ is the $1\times 1$ matrix with entry $(\tilde{A})_{ij}$.\\
    Now for the first case $i=j$. Let $e_k$ be an edge in $\Gamma$ with endpoints $x$ and $y$ in $c_i$. So $e_k$ corresponds to a row in $S_{ii}$. The vertex $x$ is incident to $\omega -1$ edges in $c_i$ one of which is $e_k$ and the same for vertex $y$. So $e_k$ is incident to $2(\omega-2)$ edges in $c_i$ implying the sum of the $e_k$ row in $S_{ii}$ is $2(\omega-2)$. Since there are $\binom{\omega}{2}$ rows in $S_{ii}$, the total sum of entries in $S_{ii}$ is $2(\omega-2)\binom{\omega}{2} = 6\binom{\omega}{3}$.\\So $t_i^T \tilde{A}t_i =\left[6\binom{\omega}{3}\right] = \varphi(t_i)^T A_L \varphi(t_i)$.\\
    Next for the case when $c_i$ is adjacent to $c_j$. Let $x$ be the unique vertex in $\Gamma$ that $c_i$ and $c_j$ share and let $e_k$ be an edge incident to $x$ with other endpoint in $c_i$. Then $e_k$ is incident to the $\omega-1$ edges of $x$ with other endpoints in $c_j$ implying the sum of the $e_k$ row in $S_{ij}$ is $\omega-1$. Since there are $\omega-1$ edges incident to $x$ with other endpoint in $c_i$, and since every edge in $c_i$ that isn't incident to $x$ is not incident to any edges in $c_j$, the total sum of the entries in $S_{ij}$ is $(\omega-1)^2$. So $t_i^T \tilde{A}t_j =\left[(\omega-1)^2\right] = \varphi(t_i)^T A_L \varphi(t_j)$.\\
    Finally for the last case when $i \neq j$ and $c_i$ is not adjacent to $c_j$. Let $e_k$ be an edge in $c_i$. Then since $c_i$ and $c_j$ don't share common vertices, $e_k$ is not incident to any edges in $c_j$ implying that the total sum of the entries in $S_{ij}$ is 0.\\So $t_i^T \tilde{A}t_j =\left[0\right] = \varphi(t_i)^T A_L \varphi(t_j)$.
\end{proof}

\begin{thm} \label{thm:bounds}
    Suppose $\Gamma$ is $\omega$-clique regular and the eigenvalues of $L(\Gamma)$ are $\mu_1 \leq \cdots \leq \mu_{m\binom{\omega}{2}}$. Then for each eigenvalue $\lambda$ of $C_\omega(\Gamma)$,
    \[\frac{\omega}{\omega - 1}\left(\frac{\mu_1}{2} -\omega +2\right) \leq \lambda \leq \frac{\omega}{\omega - 1}\left(\frac{\mu_{m\binom{\omega}{2}}}{2} -\omega +2\right).\]
\end{thm}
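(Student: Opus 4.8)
The plan is to combine Lemma~\ref{lem:phi} with the Rayleigh quotient characterization of the extreme eigenvalues of a symmetric matrix. First I would record two easy facts. Since $A_C$ is symmetric and $\tilde{A} = 6\binom{\omega}{3}I_m + (\omega-1)^2 A_C$, the matrix $\tilde{A}$ is symmetric and shares its eigenvectors with $A_C$: if $A_C v = \lambda v$ then $\tilde{A} v = \bigl(6\binom{\omega}{3} + (\omega-1)^2\lambda\bigr)v$. Second, because $\varphi$ simply repeats each coordinate $\binom{\omega}{2}$ times, it scales the squared norm by that factor, i.e. $\varphi(v)^T\varphi(v) = \binom{\omega}{2}\,v^T v$ for every $v$; in particular $\varphi(v) \neq 0$ whenever $v \neq 0$.

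Now fix an eigenvalue $\lambda$ of $C_\omega(\Gamma)$ with eigenvector $v \neq 0$. Using symmetry of $\tilde{A}$, the Rayleigh quotient of $\tilde{A}$ at $v$ equals its eigenvalue, so
\[6\binom{\omega}{3} + (\omega-1)^2\lambda = \frac{v^T \tilde{A} v}{v^T v} = \frac{\varphi(v)^T A_L \varphi(v)}{v^T v} = \binom{\omega}{2}\cdot\frac{\varphi(v)^T A_L \varphi(v)}{\varphi(v)^T \varphi(v)},\]
where the middle equality is Lemma~\ref{lem:phi} and the last uses the norm-scaling identity. Since $A_L$ is symmetric with smallest and largest eigenvalues $\mu_1$ and $\mu_{m\binom{\omega}{2}}$, the Rayleigh quotient of $A_L$ at the nonzero vector $\varphi(v)$ lies in the interval $[\mu_1,\ \mu_{m\binom{\omega}{2}}]$. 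Hence
\[\binom{\omega}{2}\mu_1 \;\leq\; 6\binom{\omega}{3} + (\omega-1)^2\lambda \;\leq\; \binom{\omega}{2}\mu_{m\binom{\omega}{2}}.\]

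It remains to solve this chain of inequalities for $\lambda$. Dividing by the positive quantity $(\omega-1)^2$ preserves the inequalities, and substituting $\binom{\omega}{2} = \tfrac{\omega(\omega-1)}{2}$ and $6\binom{\omega}{3} = \omega(\omega-1)(\omega-2)$ lets me factor $\omega(\omega-1)$ out of each bound and cancel one factor of $(\omega-1)$, producing exactly $\frac{\omega}{\omega-1}\bigl(\frac{\mu_1}{2} - \omega + 2\bigr)$ on the left and $\frac{\omega}{\omega-1}\bigl(\frac{\mu_{m\binom{\omega}{2}}}{2} - \omega + 2\bigr)$ on the right. The one point demanding care --- and the only real obstacle --- is the passage from the bare quadratic-form identity of Lemma~\ref{lem:phi} to a statement about \emph{normalized} Rayleigh quotients: this is precisely what the norm-scaling identity $\varphi(v)^T\varphi(v) = \binom{\omega}{2}v^Tv$ supplies, after which the eigenvalue bounds and the binomial simplification are routine.
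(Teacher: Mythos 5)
Your proof is correct and takes essentially the same approach as the paper: both hinge on Lemma~\ref{lem:phi} applied to an eigenvector of $A_C$, and your Rayleigh-quotient bound for $A_L$ is exactly what the paper's appeal to the Eigenvalue Interlacing Theorem reduces to in the $1\times 1$ case (a single vector with $\varphi(v)^T\varphi(v)=I_1$). The only cosmetic difference is that you keep the factor $v^Tv$ in a denominator rather than normalizing the eigenvector up front as the paper does.
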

\begin{proof}
    For this proof $|t|$ will denote the euclidean norm of vector $t$. Let $\lambda$ be an eigenvalue of $A_C$ with eigenvector $u$. Clearly, $A_C$ and $\tilde{A} = 6\binom{\omega}{3}I_{m} + (\omega -1)^2A_C$ share the same eigenvectors and $\eta= 6\binom{\omega}{3}+(\omega-1)^2\lambda$ is an eigenvalue of $\tilde{A}$ with the eigenvector $u$. Now define $v = \frac{u}{|u|\sqrt{\binom{\omega}{2}}}$ so that $|v|^2=\frac{1}{\binom{\omega}{2}}$ and $v$ is an eigenvector of $\tilde{A}$ with eigenvalue $\eta$. Notice that 
    \[\varphi(v)^T\varphi(v)=\left[|\varphi(v)|^2\right]=\left[\binom{\omega}{2}|v|^2\right]=[1]=I_1.\]
    By the Eigenvalue Interlacing Theorem, \cite[pg. 26]{haemers} this implies that the eigenvalue of the $1 \times 1$ matrix $\varphi(v)^T A_L \varphi(v)$ interlaces the eigenvalues of $A_L$. By Lemma \ref{lem:phi}, 
    \[\varphi(v)^T A_L \varphi(v)  = v^T \tilde{A}v = \eta v^T v =\left[\eta|v|^2\right]=\left[\frac{\eta}{\binom{\omega}{2}}\right].\] So the eigenvalue $\frac{\eta}{\binom{\omega}{2}}$ interlaces the eigenvalues of $A_L$, $\mu_1 \leq \frac{\eta}{\binom{\omega}{2}} \leq \mu_{m\binom{\omega}{2}}$. \\Since $\eta=6\binom{\omega}{3}+(\omega - 1)^2\lambda$, it follows that
     \[\frac{1}{(\omega - 1)^2}\left(\binom{\omega}{2} \mu_1-6\binom{\omega}{3}\right) \leq \lambda \leq \frac{1}{(\omega - 1)^2}\left(\binom{\omega}{2} \mu_{m\binom{\omega}{2}}-6\binom{\omega}{3}\right).\]
    Equivalently,
    \[\frac{\omega}{\omega - 1}\left(\frac{\mu_1}{2} -\omega +2\right) \leq \lambda \leq \frac{\omega}{\omega - 1}\left(\frac{\mu_{m\binom{\omega}{2}}}{2} -\omega +2\right).\]
\end{proof}

\begin{figure}[H]
    \centering
    \includegraphics[width=0.5\linewidth]{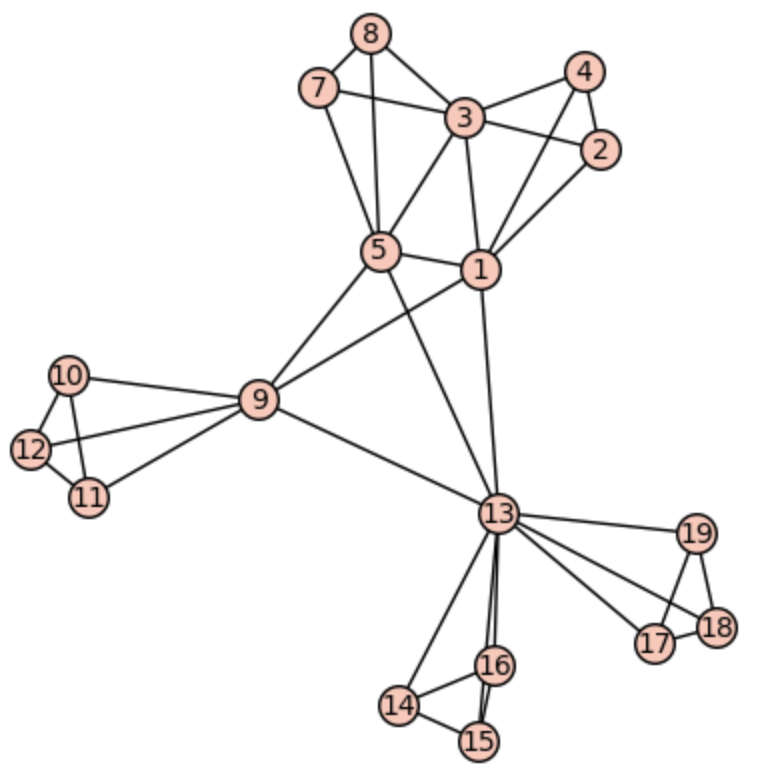}
    \caption{}
    \label{fig:7}
\end{figure}

\begin{exmp}
    Let $\Gamma$ be the graph in Figure \ref{fig:7}. Then $C_4(\Gamma)$ will have 6 vertices and 7 edges and since $\Gamma$ is 4-clique regular then we can use Theorem \ref{thm:bounds} which tells us that the eigenvalues of $C_4(\Gamma$) will be in the range $[-4,3.4782]$.
    Using SAGE we find that the largest eigenvalue of $C_4(\Gamma)$ is 2.7092 and the smallest eigenvalue is -1.9032 which both fall within this range.
\end{exmp}

\begin{cor}
    If $\Gamma$ is $\omega$-clique regular, connected and not a complete graph, then for $\mu_{m\binom{\omega}{2}}$ the largest eigenvalue of $L(\Gamma)$, $2\omega -4 < \mu_{m\binom{\omega}{2}}$.
\end{cor}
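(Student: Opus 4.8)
The plan is to derive the inequality directly from the upper bound of Theorem~\ref{thm:bounds} applied to the largest eigenvalue of $C_\omega(\Gamma)$. Write $\lambda_{\max}$ for the largest eigenvalue of $C_\omega(\Gamma)$ and $\mu := \mu_{m\binom{\omega}{2}}$ for the largest eigenvalue of $L(\Gamma)$. Theorem~\ref{thm:bounds} gives $\lambda_{\max} \le \frac{\omega}{\omega-1}\left(\frac{\mu}{2} - \omega + 2\right)$. Since $\omega \ge 2$ the factor $\frac{\omega}{\omega-1}$ is strictly positive, so once I show that $\lambda_{\max} > 0$ it follows that $\frac{\mu}{2} - \omega + 2 > 0$, which rearranges immediately to $2\omega - 4 < \mu$. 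Thus the whole corollary reduces to the single claim that $C_\omega(\Gamma)$ has a positive eigenvalue.

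For that I would invoke the standard fact that the largest eigenvalue of the adjacency matrix of a graph is positive precisely when the graph has at least one edge: if $ij$ is an edge of $C_\omega(\Gamma)$ then evaluating the Rayleigh quotient of $A_C$ at $e_i + e_j$ yields $\frac{(e_i+e_j)^T A_C (e_i+e_j)}{(e_i+e_j)^T(e_i+e_j)} = 1$, forcing $\lambda_{\max} \ge 1 > 0$ (equivalently, interlace with the $K_2$ principal submatrix). So it suffices to exhibit two distinct $\omega$-cliques of $\Gamma$ sharing a common vertex, i.e.\ an edge of $C_\omega(\Gamma)$.

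To produce such a pair I use connectedness together with the hypothesis that $\Gamma$ is not complete. Because $\Gamma$ is $\omega$-clique regular with a nonempty edge set, it contains at least one $\omega$-clique $c$, so $\Gamma$ has at least $\omega$ vertices; if it had exactly $\omega$, then $c$ would span all of $\Gamma$ and $\Gamma$ would equal $K_\omega$, contradicting the assumption that $\Gamma$ is not complete. Hence $\Gamma$ has a vertex outside $c$, and since $\Gamma$ is connected and $c$ is a proper subset of the vertex set, some edge $uw$ has $u \in c$ and $w \notin c$. This edge $uw$ lies in a unique $\omega$-clique $c'$, which contains both $u$ and $w$; since $w \notin c$ we have $c' \ne c$, while $u \in c \cap c'$ shows that $c$ and $c'$ share a vertex. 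This produces the required edge of $C_\omega(\Gamma)$, completing the argument.

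The main (and essentially only) obstacle is this nonemptiness step, which is exactly where the hypotheses \emph{connected} and \emph{not complete} are used; everything else is the short positivity-and-rearrangement chain above. I note that the same argument covers the case $\omega = 2$ uniformly, where the statement specializes to $\mu > 0$ for the line graph of a connected non-complete graph (necessarily on more than two vertices).
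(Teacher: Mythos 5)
Your proof is correct and takes essentially the same route as the paper: establish that $C_\omega(\Gamma)$ contains an edge, conclude that its largest eigenvalue is positive, and plug that into the upper bound of Theorem~\ref{thm:bounds} to get $2\omega-4<\mu_{m\binom{\omega}{2}}$. The only differences are cosmetic --- you spell out the construction of two intersecting $\omega$-cliques from connectedness and non-completeness (which the paper asserts in one line) and you certify positivity via a Rayleigh quotient at $e_i+e_j$, where the paper instead observes that the eigenvalues of $A_C$ sum to zero.
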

\begin{proof}
    Since $\Gamma$ is connected and not a complete graph, the $\omega$-clique graph of $\Gamma$ contains an edge. So there exist nonzero eigenvalues of $A_C$ and since they sum to 0, the largest eigenvalue of $A_C$ is greater than 0. This implies by Theorem \ref{thm:bounds} that $0 < \frac{\omega}{\omega - 1}\left(\frac{\mu_{m\binom{\omega}{2}}}{2} -\omega +2\right)$, from which it follows $2\omega -4 <\mu_{m\binom{\omega}{2}}$.
\end{proof}

Note that it is similar to show $\mu_1 < 2\omega -4$ but this result is redundant since $\mu_1 <0$ and $2\omega -4 \geq 0$ for $\omega \geq 2$.
\begin{cor} \label{cor:eigens}
    If $\Gamma$ is $\omega$-clique regular then for each eigenvalue $\lambda$ of $C_\omega(\Gamma)$, \[-\omega \leq \lambda \leq \omega\left(\frac{\Delta(\Gamma)}{\omega-1}-1\right)\] where $\Delta(\Gamma)$ denotes the largest degree of $\Gamma$.
\end{cor}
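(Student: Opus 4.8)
The plan is to feed Theorem~\ref{thm:bounds} the extreme eigenvalues of the line graph, since that theorem already sandwiches every eigenvalue $\lambda$ of $C_\omega(\Gamma)$ between expressions in $\mu_1$ and $\mu_{m\binom{\omega}{2}}$. Thus it suffices to produce a universal lower bound for the smallest eigenvalue $\mu_1$ of $L(\Gamma)$ and a universal upper bound for the largest eigenvalue $\mu_{m\binom{\omega}{2}}$ of $L(\Gamma)$, neither depending on the fine structure of $\Gamma$, and then substitute and simplify. Note that $\tfrac{\omega}{\omega-1}>0$ for $\omega\geq 2$, so the two bounds of Theorem~\ref{thm:bounds} are increasing in $\mu_1$ and $\mu_{m\binom{\omega}{2}}$ respectively, which makes the substitution legitimate.

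For the lower estimate I would invoke the classical fact that the smallest eigenvalue of any line graph is at least $-2$. Concretely, if $B$ is the vertex--edge incidence matrix of $\Gamma$, then $B^{T}B = A_L + 2I$, and since $B^{T}B$ is positive semidefinite all of its eigenvalues are nonnegative, forcing $\mu_1 \geq -2$. Because the left bound of Theorem~\ref{thm:bounds} increases in $\mu_1$, substituting the worst case $\mu_1 = -2$ gives
\[\lambda \geq \frac{\omega}{\omega-1}\left(\frac{-2}{2}-\omega+2\right)=\frac{\omega}{\omega-1}(1-\omega)=-\omega,\]
which is exactly the desired lower bound.

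For the upper estimate I would use that the largest adjacency eigenvalue of any graph is at most its maximum degree. In $L(\Gamma)$ the vertex corresponding to an edge $\{u,v\}$ of $\Gamma$ has degree $d(u)+d(v)-2 \leq 2\Delta(\Gamma)-2$, so $\Delta(L(\Gamma))\leq 2\Delta(\Gamma)-2$ and hence $\mu_{m\binom{\omega}{2}} \leq 2\Delta(\Gamma)-2$. Feeding this into the right bound of Theorem~\ref{thm:bounds} yields
\[\lambda \leq \frac{\omega}{\omega-1}\left(\frac{2\Delta(\Gamma)-2}{2}-\omega+2\right)=\frac{\omega}{\omega-1}\bigl(\Delta(\Gamma)-\omega+1\bigr)=\omega\left(\frac{\Delta(\Gamma)}{\omega-1}-1\right),\]
completing the bound.

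I expect no serious obstacle here: both ingredients are standard spectral facts about line graphs, and indeed the $-2$ lower bound already surfaces implicitly in the line-graph spectrum $-2^{m-n},(\lambda_i+k-2)^{a_i}$ recalled in the introduction. The only thing to verify is that the algebra collapses cleanly to the stated closed forms, which it does, so the entire argument amounts to the two monotone substitutions above. If I wanted to flag a subtlety, it would simply be confirming the direction of the inequalities under multiplication by $\tfrac{\omega}{\omega-1}$, which is safe precisely because $\omega\geq 2$ keeps this factor positive.
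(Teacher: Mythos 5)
Your proposal is correct and matches the paper's own proof: both arguments feed the classical line-graph facts $\mu_1 \geq -2$ and $\mu_{m\binom{\omega}{2}} \leq \Delta(L(\Gamma)) \leq 2(\Delta(\Gamma)-1)$ into the bounds of Theorem~\ref{thm:bounds} and simplify. The extra details you supply (the incidence-matrix justification of $-2 \leq \mu_1$ and the monotonicity check) are fine but not a departure from the paper's route.
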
 
\begin{proof}
    Let $\mu_1 \leq \cdots \leq \mu_{m\binom{\omega}{2}}$ be the eigenvalues of $L(\Gamma)$. Two classical results in spectral graph theory are that for any line graph, $-2 \leq \mu_1$ and $\mu_{m\binom{\omega}{2}} \leq \Delta(L(\Gamma))\leq 2(\Delta(\Gamma)-1)$. From these we get 
    \[ -\omega \leq \frac{\omega}{\omega - 1}\left(\frac{\mu_1}{2} -\omega +2\right)\] and \[ \frac{\omega}{\omega - 1}\left(\frac{\mu_{m\binom{\omega}{2}}}{2} -\omega +2\right) \leq \omega\left(\frac{\Delta(\Gamma)}{\omega-1}-1\right).\]
    So the result follows from Theorem \ref{thm:bounds}.
\end{proof}
If $\Gamma$ is an $\omega$-clique regular graph and also $k$-regular, then we can show that the characteristic polynomial of $C_\omega(\Gamma)$ is a function of the characteristic polynomial of $\Gamma$. Recall that $p(\Gamma; \lambda)$ denotes the characteristic polynomial of $\Gamma$'s adjacency matrix and that the roots of this polynomial are the eigenvalues of $\Gamma$. Let $n$ be the number of vertices in $\Gamma$ and $m=\frac{nk}{\omega (\omega -1)}$ be the number of $\omega$-cliques in $\Gamma$, i.e. the number of vertices in $C_\omega(\Gamma)$. Let $A$ be the adjacency matrix of $\Gamma$ with vertices indexed $(v_1, \ldots , v_n)$, and let $A_C$ be the adjacency matrix of $C_\omega(\Gamma)$ with $\omega$-cliques indexed $(c_1, \ldots , c_m)$. We will define the \textbf{$\omega$-clique incidence matrix} of $\Gamma$ as an $n\times m$ matrix $R$ such that the rows are indexed by the vertices in $\Gamma$ in the same order as in $A$, and the columns are indexed by the $\omega$-cliques in $\Gamma$ in the same order as in $A_C$. The entries of $R$ are defined,
\[(R)_{ij} = 
    \begin{cases}
        1 & \text{if } v_i \in c_j\\
        0 & \text{otherwise.}
    \end{cases}\]
Recall also that the \textbf{degree matrix} of $\Gamma$ is defined as $D=\text{diag}(d(v_1), \ldots , d(v_n))$.

\begin{lem} \label{lem:blockmat}
Suppose that $\Gamma$ is $\omega$-clique regular with $\omega$-clique incidence matrix $R$ and degree matrix $D$. Then:\vspace{0.2cm} \\
           \hspace*{0.5cm} (1) $R^TR=A_C + \omega I_m$ and  \\
           \hspace*{0.5cm} (2) $RR^T = A + \frac{1}{\omega -1}D$. \vspace{0.2cm}
\end{lem}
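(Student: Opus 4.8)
The plan is to prove both identities entrywise, reading each matrix product as a count of vertices or cliques and then identifying that count with the claimed combinatorial quantity using the defining property of $\omega$-clique regularity, namely that every edge lies in a unique $\omega$-clique.

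For identity (1), I would begin with the observation that
\[
(R^T R)_{ij} = \sum_{k} (R)_{ki}(R)_{kj} = |c_i \cap c_j|,
\]
since the summand is $1$ exactly when $v_k$ lies in both $c_i$ and $c_j$. When $i = j$ this equals $|c_i| = \omega$, which accounts for the $\omega I_m$ term. The crux for the off-diagonal entries is to show that $|c_i \cap c_j| \le 1$ whenever $i \neq j$: if two distinct $\omega$-cliques shared two vertices $x$ and $y$, then both would contain the edge $xy$, contradicting that $xy$ lies in a unique $\omega$-clique. Hence for $i \neq j$ the entry is $1$ precisely when $c_i$ and $c_j$ have nonempty intersection, which is exactly the adjacency condition defining $C_\omega(\Gamma)$. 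Thus $(R^T R)_{ij} = (A_C)_{ij}$ off the diagonal, giving $R^T R = A_C + \omega I_m$.

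For identity (2) I would dualize the bookkeeping: $(R R^T)_{ij} = \sum_k (R)_{ik}(R)_{jk}$ counts the $\omega$-cliques containing both $v_i$ and $v_j$. For $i \neq j$, any clique containing $v_i$ and $v_j$ must contain the edge $v_i v_j$, so by uniqueness there is exactly one such clique when $v_i$ and $v_j$ are adjacent and none otherwise; this reproduces the entries of $A$. For the diagonal, I would count the cliques through a single vertex $v_i$, and this is where the $\tfrac{1}{\omega-1}$ factor enters: each $\omega$-clique containing $v_i$ uses exactly $\omega - 1$ of the edges incident to $v_i$, and by $\omega$-clique regularity these edge sets partition the $d(v_i)$ edges at $v_i$ (every incident edge lies in exactly one of them). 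Therefore $v_i$ lies in $d(v_i)/(\omega - 1)$ cliques, yielding the $\tfrac{1}{\omega-1}D$ term and hence $R R^T = A + \tfrac{1}{\omega-1}D$.

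I expect the main obstacle to be the diagonal computation in part (2): the other three cases reduce quickly to the uniqueness hypothesis, but establishing the $\tfrac{1}{\omega-1}$ factor requires the explicit partition argument, namely that the edges at $v_i$ split into disjoint groups of size $\omega - 1$, one per clique through $v_i$, with no edge shared and no edge omitted. This is exactly where $\omega$-clique regularity must be invoked in its full strength (both the existence and the uniqueness of the clique on each edge), so I would state that partition carefully before dividing by $\omega - 1$.
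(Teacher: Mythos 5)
Your proof is correct and follows essentially the same route as the paper's: an entrywise computation of $R^TR$ and $RR^T$, interpreting the entries as counts of common vertices and common cliques, with the same case analysis (diagonal, adjacent, non-adjacent). If anything, you are slightly more explicit than the paper at two points it leaves implicit --- that distinct $\omega$-cliques intersect in at most one vertex, and that the cliques through a vertex partition its incident edges into groups of size $\omega-1$ --- both of which follow from the uniqueness hypothesis exactly as you argue.
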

\begin{proof}
    $(1)$ We have
    \[(R^TR)_{ij} = \sum_{l=1}^n (R)_{li}(R)_{lj},\] 
    from which it is clear that $(R^TR)_{ij}$ is the number of vertices that are in both clique $c_i$ and $c_j$. If $i=j$, then the sum will equal $\omega$, which is the number of vertices in $c_i$. So $(R^TR)_{ii} = \omega =(A_C + \omega I_m)_{ii}$. If $c_i$ and $c_j$ are adjacent, then they share one vertex in common. So $(R^TR)_{ij} = 1 = (A_C + \omega I_m)_{ij}$. \\Otherwise $(R^TR)_{ij}=0=(A_C + \omega I_m)_{ij}$.\\
    $(2)$ Similarly, we have that $(RR^T)_{ij}$ is the number of cliques that $v_i$ and $v_j$ commonly belong to. If $i=j$, then the sum will equal the number of cliques that contain $v_i$. Since $v_i$ has degree $d(v_i)$, and each clique with $v_i$ contains $\omega -1$ edges adjacent to $v_i$ unique to that clique, $\frac{d(v_i)}{\omega -1}$ is the number of cliques that $v_i$ belongs to. So $(RR^T)_{ij} = \frac{d(v_i)}{\omega -1} = (A + \frac{1}{\omega -1}D)_{ij}$. If $v_i$ and $v_j$ are adjacent, then the edge between them belongs to a unique clique so $(RR^T)_{ij} = 1 = (A + \frac{1}{\omega -1}D)_{ij}$. \\Otherwise $(RR^T)_{ij} = 0 =(A + \frac{1}{\omega -1}D)_{ij}$.
\end{proof}
\begin{thm} \label{thm:eigen}
    If $\Gamma$ is $k$-regular and $\omega$-clique regular, then 
    \[p(C_\omega(\Gamma); \lambda)=(\lambda+\omega)^{m-n}p\left(\Gamma;\lambda+\omega-\frac{k}{\omega - 1}\right)\]
    where $m=\frac{nk}{\omega(\omega-1)}$.
\end{thm}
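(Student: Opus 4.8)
The plan is to compute $p(C_\omega(\Gamma);\lambda)$ directly from the two identities in Lemma~\ref{lem:blockmat}, using the clique incidence matrix $R$ as a bridge between $A_C$ and $A$. Because $\Gamma$ is $k$-regular, its degree matrix collapses to $D=kI_n$, so part~$(2)$ of that lemma reads $RR^T=A+\frac{k}{\omega-1}I_n$, while part~$(1)$ reads $R^TR=A_C+\omega I_m$. Thus each of $A_C$ and $A$ is an affine shift of one of the symmetric products $R^TR$ and $RR^T$, and the whole theorem reduces to relating the characteristic polynomials of these two products.

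The central ingredient I would invoke is the classical fact that, for the $n\times m$ matrix $R$, the products $RR^T$ and $R^TR$ share the same nonzero spectrum with equal multiplicities; concretely,
\[x^{m}\det(xI_n-RR^T)=x^{n}\det(xI_m-R^TR).\]
I would establish this by evaluating the determinant of the block matrix $\left(\begin{smallmatrix} xI_n & R\\ R^T & I_m\end{smallmatrix}\right)$ in two ways via Schur complements: eliminating the invertible $(2,2)$ block gives $\det(xI_n-RR^T)$, while eliminating the $xI_n$ block gives $x^{\,n-m}\det(xI_m-R^TR)$, and clearing denominators yields the symmetric polynomial identity above, valid for all $x$.

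With this in hand the computation is a sequence of substitutions. From $A_C=R^TR-\omega I_m$ we have $p(C_\omega(\Gamma);\lambda)=\det\bigl((\lambda+\omega)I_m-R^TR\bigr)$, and from $RR^T=A+\frac{k}{\omega-1}I_n$ we have $\det\bigl((\lambda+\omega)I_n-RR^T\bigr)=p\bigl(\Gamma;\lambda+\omega-\frac{k}{\omega-1}\bigr)$. Setting $x=\lambda+\omega$ in the identity above and solving for $\det\bigl((\lambda+\omega)I_m-R^TR\bigr)$ then gives
\[p(C_\omega(\Gamma);\lambda)=(\lambda+\omega)^{m-n}\,p\Bigl(\Gamma;\lambda+\omega-\tfrac{k}{\omega-1}\Bigr),\]
which is exactly the assertion; no combinatorial input about $\Gamma$ beyond $k$-regularity (used only to turn $D$ into $kI_n$) is required.

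The one point needing care is the bookkeeping of the power of $(\lambda+\omega)$. The Schur computation naturally produces a factor $x^{\,n-m}$ on one side, so to present the result as a genuine polynomial identity one should start from the symmetric form $x^m\det(xI_n-RR^T)=x^n\det(xI_m-R^TR)$ and cancel the common power of $x=\lambda+\omega$; the stated exponent $m-n$ is then correct as written when $m\geq n$ and is read through this cancellation in general. Conceptually this exponent records the multiplicity of the eigenvalue $-\omega$ of $A_C$ forced by $\dim\ker R$, and checking that it matches is the only place where the relative sizes of $n$ and $m$ enter.
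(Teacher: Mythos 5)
Your proposal is correct and follows essentially the same route as the paper: both rest on Lemma~\ref{lem:blockmat} together with the identity $x^{m}\det(xI_n-RR^T)=x^{n}\det(xI_m-R^TR)$ evaluated at $x=\lambda+\omega$, the only difference being that you derive that identity via Schur complements of $\bigl(\begin{smallmatrix} xI_n & R\\ R^T & I_m\end{smallmatrix}\bigr)$ while the paper (following Biggs) gets it from $\det(UV)=\det(VU)$ for a pair of block matrices, which is a cosmetic variant of the same computation. Your closing remark on the exponent $m-n$ when $m<n$ is a point of care the paper leaves implicit, but it does not change the argument.
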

\begin{proof}
    Define two square block matrices with $n+m$ rows and columns as follows with $R$ the $\omega$-clique incidence matrix of $\Gamma$,
    \[U = \begin{bmatrix}(\lambda+\omega) I_n & -R \\0 & I_m \\\end{bmatrix}, \hspace{1cm}
    V = \begin{bmatrix}I_n & X \\R^T & (\lambda+\omega) I_m \\\end{bmatrix}.\]
Then, 
\[UV = \begin{bmatrix}(\lambda+\omega) I_n - RR^T & 0 \\R^T & (\lambda+\omega) I_m \\\end{bmatrix}, 
    VU = \begin{bmatrix}(\lambda+\omega) I_n & 0 \\(\lambda+\omega) R^T & (\lambda+\omega) I_m - R^TR \\\end{bmatrix}.\]
Since $\text{det}(UV)=\text{det}(VU)$, it follows that
\begin{align*}
    (\lambda+\omega)^m \text{det}((\lambda+\omega) I_n - RR^T) & =(\lambda+\omega)^n \text{det}((\lambda+\omega) I_m - R^TR) \\ 
    (\lambda+\omega)^{m-n} \text{det}((\lambda+\omega) I_n - RR^T) & =\text{det}((\lambda+\omega) I_m - R^TR).
\end{align*}
Using the above equality and Lemma \ref{lem:blockmat} we get,
\begin{align*}
    p(C_\omega(\Gamma);\lambda) & = \text{det}(\lambda I_m - A_C) \\ 
    & = \text{det}((\lambda + \omega)I_m - R^TR)  \\
    & = (\lambda + \omega)^{m-n}\text{det}((\lambda + \omega)I_n - RR^T) \\
    & = (\lambda + \omega)^{m-n}\text{det}\left(\left(\lambda + \omega - \frac{k}{\omega -1}\right)I_n - A\right)\\
    & = (\lambda + \omega)^{m-n}p\left(\Gamma ; \lambda + \omega - \frac{k}{\omega -1}\right).
\end{align*}
\end{proof}
Note that for the previous lemma and theorem, taking $\omega =2$ gives the proof from Biggs \cite[p.18-19]{biggs} for the characteristic polynomial of the line graph of a regular graph. Here we generalized this proof for the $\omega$-clique graph of any graph that is regular and $\omega$-clique regular for any $\omega \geq 2$.
\begin{remark} \label{rem:eigen}
    Recall that the eigenvalues of a matrix are exactly the roots of the matrix's characteristic polynomial. So Theorem \ref{thm:eigen} implies that if the spectrum of a $k$-regular and $\omega$-clique regular graph is
    \[\lambda_1^{a_1}, \ldots , \lambda_r^{a_r}, k^c,\]
    then the spectrum of its $\omega$-clique graph is 
    \[-\omega^{m-n}, \left(\frac{k}{\omega -1} + \lambda_1 -\omega \right)^{a_1}, \ldots, \left(\frac{k}{\omega -1} + \lambda_r -\omega \right)^{a_r}, \left(\frac{k}{\omega -1} + k -\omega \right)^c\]
    where $n$ is the number of vertices and $m=\frac{nk}{\omega(\omega -1)}$ is the number of $\omega$-cliques.
\end{remark}
\begin{cor} \label{cor:lowbound}
    Suppose $\Gamma$ is $k$-regular and $\omega$-clique regular where the smallest eigenvalue $\lambda_1$ has multiplicity $a_1$. Then $\lambda_1 \geq \frac{-k}{\omega-1}$ and if $k < \omega(\omega -1)$ as well, then $\lambda_1 = \frac{-k}{\omega-1}$ and $a_1 \geq n-\frac{nk}{\omega(\omega -1)}.$
\end{cor}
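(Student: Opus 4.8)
The plan is to extract both claims from the factorization in Lemma \ref{lem:blockmat}(2). Since $\Gamma$ is $k$-regular we have $D = kI_n$, so that lemma gives $RR^T = A + \frac{k}{\omega-1}I_n$. The crucial observation is that $RR^T$ is positive semidefinite: for any $x \in \mathbb{R}^n$ we have $x^T RR^T x = |R^T x|^2 \geq 0$, so every eigenvalue of $RR^T$ is nonnegative.

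Next I would exploit that $RR^T$ is merely a scalar shift of $A$, so the two matrices share eigenvectors and the eigenvalues of $RR^T$ are exactly $\lambda_i + \frac{k}{\omega-1}$ as $\lambda_i$ runs over the eigenvalues of $\Gamma$. Nonnegativity forces $\lambda_i + \frac{k}{\omega-1}\geq 0$, i.e. $\lambda_i \geq -\frac{k}{\omega-1}$ for every $i$; applying this to the smallest eigenvalue yields $\lambda_1 \geq -\frac{k}{\omega-1}$, which is the first claim.

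For the second claim I would bring in a rank count. Assuming also $k < \omega(\omega-1)$, the clique count $m = \frac{nk}{\omega(\omega-1)}$ is strictly less than $n$. Because $R$ is $n \times m$, we have $\text{rank}(RR^T) = \text{rank}(R) \leq m < n$, so the $n\times n$ matrix $RR^T$ is singular with a zero eigenvalue of multiplicity at least $n-m$. Through the shift, a zero eigenvalue of $RR^T$ corresponds exactly to the eigenvalue $-\frac{k}{\omega-1}$ of $A$, so $-\frac{k}{\omega-1}$ appears in the spectrum of $\Gamma$ with multiplicity at least $n-m$. Since part one shows this value is the minimum possible eigenvalue, it must equal $\lambda_1$, and its multiplicity satisfies $a_1 \geq n - m = n - \frac{nk}{\omega(\omega-1)}$.

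The step I expect to be most delicate is the transfer in the last paragraph: one must confirm that zero eigenvalues of $RR^T$ convert to multiplicity of $-\frac{k}{\omega-1}$ as an eigenvalue of $A$. This is clean because $RR^T$ and $A$ differ only by the scalar matrix $\frac{k}{\omega-1}I_n$, so they have identical eigenspaces and the multiplicity of the zero eigenvalue of $RR^T$ is precisely the multiplicity of $-\frac{k}{\omega-1}$ for $A$. The supporting fact $\text{rank}(RR^T)=\text{rank}(R)$ is standard, following from $RR^Tx=0 \iff R^Tx=0$. It is worth noting that this conclusion is exactly what makes the exponent $m-n$ in Theorem \ref{thm:eigen} consistent when $m<n$, since then $(\lambda+\omega)^{n-m}$ must divide $p\!\left(\Gamma;\lambda+\omega-\frac{k}{\omega-1}\right)$.
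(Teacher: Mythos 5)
Your proof is correct, but it takes a genuinely different route from the paper's. The paper treats this as a quick corollary of machinery already in place: the lower bound $\lambda_1 \geq \frac{-k}{\omega-1}$ is read off from Remark \ref{rem:eigen} together with Corollary \ref{cor:eigens} (i.e.\ $-\omega \leq \frac{k}{\omega-1}+\lambda_1-\omega$), where Corollary \ref{cor:eigens} itself rests on the interlacing argument of Theorem \ref{thm:bounds} and the classical fact that line graphs have smallest eigenvalue at least $-2$; the multiplicity claim is then obtained by divisibility, namely that when $m-n<0$ the factor $(\lambda+\omega)^{m-n}$ in Theorem \ref{thm:eigen} forces $(\lambda+\omega)^{n-m}$ to divide $p\left(\Gamma;\lambda+\omega-\frac{k}{\omega-1}\right)$. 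You instead go straight to the clique incidence matrix: Lemma \ref{lem:blockmat}(2) with $D=kI_n$ gives $RR^T = A + \frac{k}{\omega-1}I_n$, positive semidefiniteness of the Gram matrix $RR^T$ gives the lower bound, and the rank inequality $\operatorname{rank}(RR^T)=\operatorname{rank}(R)\leq m<n$ forces a null space of dimension at least $n-m$, hence the eigenvalue $\frac{-k}{\omega-1}$ of $A$ with multiplicity at least $n-m$. Both arguments ultimately trace back to Lemma \ref{lem:blockmat}, but yours is self-contained modulo that lemma: it bypasses interlacing, the line-graph spectral bounds, and the characteristic polynomial identity entirely, and it is in essence the classical Gram-matrix proof that line graphs have least eigenvalue $\geq -2$, generalized to the $\omega$-clique incidence matrix. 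What the paper's route buys is economy within its own narrative (two sentences given the preceding results); what yours buys is independence from that chain and a transparent structural explanation for both conclusions --- indeed, as you observe, your rank count is exactly the reason the negative exponent in Theorem \ref{thm:eigen} is consistent when $m<n$.
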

\begin{proof}
    From Remark \ref{rem:eigen}, $\frac{k}{\omega -1}+\lambda_1 -\omega$ is an eigenvalue of $C_\omega(\Gamma)$ so from Corollary \ref{cor:eigens} $-\omega \leq \frac{k}{\omega -1} +\lambda_1 -\omega$ implies $\lambda_1 \geq \frac{-k}{\omega -1}$. If $k < \omega(\omega-1)$ then \linebreak$m-n=\frac{nk}{\omega(\omega -1)}-n <0$. Since this is the exponent of the $(\lambda + \omega)$ term in the characteristic polynomial of $C_\omega(\Gamma)$ by Theorem \ref{thm:eigen}, there must be the same term in $p(\Gamma; \lambda +\omega - \frac{k}{\omega -1})$ with exponent greater than or equal to $n-\frac{nk}{\omega(\omega -1)}$. By above, this term must come from the smallest eigenvalue so we get $\lambda_1 = \frac{-k}{\omega -1}$ with multiplicity $a_1 \geq n - \frac{nk}{\omega(\omega -1)}$.
\end{proof}

\subsection{Strongly Regular Graphs}
\tab There is much overlap between clique regular graphs and strongly regular graphs. The following theorem will determine when the $\omega$-clique graph of an $\omega$-clique regular and strongly regular graph is also strongly regular. 
\begin{thm} \label{thm:srg}
    Suppose $\Gamma$ is $\omega$-clique regular and a non-boring srg$(n,k,\lambda, \mu)$ with spectrum $k^1,r^f,s^g$ where $r>s$. Then the $\omega$-clique graph of $\Gamma$ is strongly regular if and only if $s = \frac{-k}{\omega-1}$ or $k = \omega(\omega -1)$. If so, $C_\omega(\Gamma)$ has parameters
    $$\text{srg}\left( \frac{nk}{\omega(\omega -1)},\; \omega \left(\frac{k}{\omega-1}-1\right),\; \lambda^*,\; \mu^* \right)$$
    and if $\lambda = \omega - 2$, then $\lambda^* = \frac{k}{\omega -1}-2$ and $\mu^*=\mu +\omega-\frac{k}{\omega-1}$.
\end{thm}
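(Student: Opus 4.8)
The plan is to read the full spectrum of $C_\omega(\Gamma)$ off Remark \ref{rem:eigen}, apply the characterization of strongly regular graphs as the regular graphs with at most three distinct eigenvalues, and then identify the parameters by combining a direct count with the eigenvalue identities for strongly regular graphs. Throughout write $q=\frac{k}{\omega-1}$, the number of $\omega$-cliques through each vertex (Lemma \ref{lem:blockmat}). First I would note that $C_\omega(\Gamma)$ is regular and compute its degree directly: a fixed $\omega$-clique $c$ meets $q-1$ further cliques at each of its $\omega$ vertices, and no clique can meet $c$ in two vertices, since that would be a shared edge and hence force equality by $\omega$-clique regularity. So every vertex of $C_\omega(\Gamma)$ has degree $\omega(q-1)=\omega\!\left(\frac{k}{\omega-1}-1\right)$, and there are $m=\frac{nk}{\omega(\omega-1)}$ vertices; these are the first two claimed parameters. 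By Remark \ref{rem:eigen} the spectrum of $C_\omega(\Gamma)$ is
\[ -\omega^{\,m-n},\quad (q+r-\omega)^{f},\quad (q+s-\omega)^{g},\quad (q+k-\omega)^{1}, \]
the last entry coinciding with the degree $\omega(q-1)$.

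For the equivalence I would count distinct eigenvalues. Since $k>r>s$ and $f,g>0$, the top two values $q+k-\omega$ and $q+r-\omega$ are distinct. By Corollary \ref{cor:lowbound} we have $s\ge-\frac{k}{\omega-1}$, so $q+s-\omega\ge-\omega$ with equality exactly when $s=-\frac{k}{\omega-1}$, while $q+r-\omega>-\omega$ always; thus the only possible coincidence among the eigenvalues is $q+s-\omega=-\omega$. The value $-\omega$ genuinely occurs precisely when $m-n>0$, i.e. $k>\omega(\omega-1)$; it has multiplicity $0$ when $k=\omega(\omega-1)$; and when $k<\omega(\omega-1)$ the exponent $m-n$ is negative, which is only consistent with Remark \ref{rem:eigen} because Corollary \ref{cor:lowbound} then forces $s=-\frac{k}{\omega-1}$, so the factor $(q+s-\omega)^{g}=(-\omega)^{g}$ absorbs it. Assembling the cases: if $s=-\frac{k}{\omega-1}$ the spectrum reduces to the three distinct values $q+k-\omega,\ q+r-\omega,\ -\omega$; if $k=\omega(\omega-1)$ then $-\omega$ is absent and the three distinct values are $q+k-\omega,\ q+r-\omega,\ q+s-\omega$; in either case $C_\omega(\Gamma)$ is regular with exactly three distinct eigenvalues and therefore strongly regular \cite{haemers}. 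Conversely, if neither condition holds then $k>\omega(\omega-1)$ and all four displayed values are distinct with positive multiplicity, so $C_\omega(\Gamma)$ has four eigenvalues and is not strongly regular. This gives the stated equivalence.

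Now assume in addition that $\lambda=\omega-2$ and that $C_\omega(\Gamma)$ is strongly regular; I would get $\lambda^{*}$ by a direct count and then read off $\mu^{*}$ from the identity $\lambda^{*}-\mu^{*}=r^{*}+s^{*}$. Two adjacent vertices of $C_\omega(\Gamma)$ are $\omega$-cliques $c_i,c_j$ sharing a unique vertex $x$ (they share at most one vertex, as above). The $q-2$ cliques through $x$ other than $c_i,c_j$ are common neighbours; I claim there are no others. If a further clique $c$ met $c_i$ in some $y\neq x$ and $c_j$ in some $z\neq x$, then $y,z\in c$ gives $y\sim z$ with $y\neq z$, while $z\sim x$; hence $z$ is a common neighbour of the adjacent pair $x,y$. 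But $\lambda=\omega-2$ forces the $\omega-2$ common neighbours of $x,y$ to be exactly the remaining vertices of their unique clique $c_i$, so $z\in c_i$, contradicting $z\in c_j\setminus\{x\}$. Thus $\lambda^{*}=q-2=\frac{k}{\omega-1}-2$. For $\mu^{*}$, the two nontrivial eigenvalues satisfy $r^{*}+s^{*}=(q+r-\omega)+(q+s-\omega)=2q+(\lambda-\mu)-2\omega$ using the identity $r+s=\lambda-\mu$; since $C_\omega(\Gamma)$ is a non-boring strongly regular graph, $\lambda^{*}-\mu^{*}=r^{*}+s^{*}$, and substituting $\lambda=\omega-2$ together with $\lambda^{*}=q-2$ yields $\mu^{*}=\lambda^{*}-(r^{*}+s^{*})=\mu+\omega-\frac{k}{\omega-1}$, as claimed.

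The main obstacle is the bookkeeping of the exponent $m-n$: when $k<\omega(\omega-1)$ the formal spectrum in Remark \ref{rem:eigen} carries a negative power of $(\lambda+\omega)$, and one must invoke Corollary \ref{cor:lowbound} both to see that this regime forces $s=-\frac{k}{\omega-1}$ and to confirm the spurious factor cancels. The second delicate point is the count for $\lambda^{*}$: it is precisely the hypothesis $\lambda=\omega-2$ (equivalently, that $\Gamma$ is a regular clique assembly by Theorem \ref{thm:rca}) that forbids common neighbours of adjacent cliques arising outside their shared vertex, and without it the count would fail.
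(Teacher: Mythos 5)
Your proposal is correct and follows the paper's overall strategy almost exactly: read the spectrum of $C_\omega(\Gamma)$ from Remark \ref{rem:eigen}, invoke the characterization of strongly regular graphs as regular graphs with at most three distinct eigenvalues, use Corollary \ref{cor:lowbound} to show the only possible eigenvalue coincidence is $\frac{k}{\omega-1}+s-\omega=-\omega$, and recover $\mu^*$ from the identity $\lambda^*-\mu^*=r^*+s^*$. The one genuine difference is local, in the count for $\lambda^*$: to rule out a common neighbour $c$ of $c_i,c_j$ that avoids $x$, you observe that its meeting point $z$ with $c_j$ would be a common neighbour of the adjacent pair $x,y$, and $\lambda=\omega-2$ already forces all such neighbours to lie in $c_i$, a contradiction; the paper instead routes through Theorem \ref{thm:rca} (every maximal clique is maximum), embeds the triangle $\{x,y_1,y_2\}$ in an $\omega$-clique, and contradicts uniqueness of the clique on the edge $xy_1$ --- and it only runs this argument for $\omega\geq 4$, deferring $\omega=2,3$ to Guest et al.\ \cite{guest}, whereas your argument treats all $\omega\geq 2$ uniformly and needs no citation. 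A second, smaller difference is that your explicit bookkeeping of the exponent $m-n$ when $k<\omega(\omega-1)$ (the formally negative power of $\lambda+\omega$ being absorbed into the $s$-eigenvalue, which Corollary \ref{cor:lowbound} forces to equal $-\omega$ in that regime) is more careful than the paper's proof, which leaves that case implicit. Both differences are improvements in completeness rather than changes of method.
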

Note that $\lambda^*$ and $\mu^*$ are forced by the spectrum of $C_\omega(\Gamma)$ and can be always be derived from it, but if $\Gamma$ is a regular clique assembly (equivalent to $\lambda = \omega - 2$ by Theorem \ref{thm:rca}) we can derive simpler formulas for the srg parameters.
\begin{proof}
    From Remark \ref{rem:eigen} the spectrum of $C_\omega(\Gamma)$ is \[-\omega^{m-n},\; \left(\frac{k}{\omega-1}+k-\omega \right)^1,\; \left(\frac{k}{\omega-1}+r-\omega\right)^f,\;  \left(\frac{k}{\omega-1}+s-\omega\right)^g \mbox{(*)}\] where $m=\frac{nk}{\omega(\omega-1)}$. Since $C_\omega(\Gamma)$ is $\omega(\frac{k}{\omega-1}-1)$-regular, then it is sufficient to show that the adjacency matrix of  $C_\omega(\Gamma)$ has 3 or less distinct eigenvalues if and only if $s = \frac{-k}{\omega-1}$ or $k = \omega(\omega -1)$. If $s = \frac{-k}{\omega-1}$, then $\left(\frac{k}{\omega-1}+s-\omega\right) =-\omega$ and if $k=\omega(\omega-1)$ then $m=n$  implies the multiplicity of $-\omega$ is 0. So regardless, $C_\omega(\Gamma)$ has no more than 3 distinct eigenvalues.\\Conversely, assume $C_\omega(\Gamma)$ has 3 or less distinct eigenvalues. This implies that one of the multiplicities in the spectrum $(*)$ equals 0 or two of the eigenvalues are equal. If the first case, then $m-n$ must equal 0 since $1,f,g \neq 0$. Then $m=\frac{nk}{\omega(\omega-1)}=n$ implies $k=\omega(\omega-1)$. If two of the eigenvalues of are equal, we will show $s=\frac{-k}{\omega-1}$. If any of the latter 3 eigenvalues in $(*)$ are equal, it would imply that $k=r$, $k=s$ or $r=s$, a contradiction. So $-\omega$ is equal to one of the other eigenvalues implying $k, r$ or $s$ must be equal to $\frac{-k}{\omega -1}$. From Corollary \ref{cor:lowbound}, we have $k>r>s \geq \frac{-k}{\omega - 1}$ implying $s=\frac{-k}{\omega -1}$.\\
    Now assume that $C_\omega(\Gamma)$ is strongly regular and $\lambda = \omega -2$. For the case $\omega=2$ or $3$, the result $\lambda^*=\frac{k}{\omega-1}-2$ is shown in \cite[pg. 304]{guest}. So assume $\omega \geq 4$.\\
    Let $c_1$ and $c_2$ be adjacent vertices in $C_\omega(\Gamma)$ (that is,  $\omega$-cliques in $\Gamma$) and let $x \in \Gamma$ be the unique vertex that $c_1$ and $c_2$ share. Then $c_1$ and $c_2$ are commonly adjacent to all the other cliques containing $x$. So $\lambda^* \geq \frac{k}{\omega-1} -2$. Suppose for contradiction there exists another clique $c_i$ commonly adjacent to $c_1$ and $c_2$ such that $x$ is not in $c_i$. Let $y_1$ and $y_2$ respectively be the unique common vertex of $c_1$ and $c_i$, and $c_2$ and $c_i$. Recall that $\lambda = \omega -2$ implies $\Gamma$ is a regular clique assembly and every maximal clique has order $\omega$ by Theorem \ref{thm:rca}. So the clique $\{x,y_1,y_2\}$ is contained in an $\omega$-clique not equal to $c_1$ since $c_1$ does not contain $y_2$. Thus the edge $xy_1$ is in two different $\omega$-cliques, a contradiction. So $\lambda^* = \frac{k}{\omega -1} -2$.\\
    For $\mu^*$ recall that for any srg, $\lambda - \mu = r+s$. So we get
    \begin{align*}
        \lambda^* - \mu^* &= r^*+s^* \\
        \mu^* &= \lambda^* -r^*-s^* \\
        &= \left(\frac{k}{\omega-1}-2\right)-\left(\frac{k}{\omega -1} +r -\omega\right) -\left(\frac{k}{\omega -1} +s -\omega\right) \\
        &= 2\omega - (r+s)-2-\frac{k}{\omega-1} \\
        &= 2\omega - (\lambda - \mu)-2-\frac{k}{\omega-1}\textbf{}\\
        &= 2\omega - (\omega-2 - \mu)-2-\frac{k}{\omega-1}\\
        &= \mu +\omega- \frac{k}{\omega-1}.
    \end{align*}
\end{proof}

\begin{cor} \label{cor:kww}
    Suppose $\Gamma$ is $\omega$-clique regular and a non-boring srg$(n,k,\lambda, \mu)$. Then $\Gamma$ and $C_\omega(\Gamma)$ are srgs with the same parameters if and only if $k=\omega(\omega-1)$.
\end{cor}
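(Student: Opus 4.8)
The plan is to prove both implications by comparing the number of vertices and the spectra of $\Gamma$ and $C_\omega(\Gamma)$, exploiting the fact that the parameters of a non-boring strongly regular graph are completely determined by its spectrum (eigenvalues together with multiplicities).

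For the forward direction, I would simply note that if $\Gamma$ and $C_\omega(\Gamma)$ have the same srg parameters, then in particular they have the same number of vertices. Since $C_\omega(\Gamma)$ has $m = \frac{nk}{\omega(\omega-1)}$ vertices, the equality $m = n$ forces $\frac{nk}{\omega(\omega-1)} = n$, which immediately gives $k = \omega(\omega-1)$.

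For the converse, I would assume $k = \omega(\omega-1)$ and first invoke Theorem \ref{thm:srg} (via the condition $k = \omega(\omega-1)$) to conclude that $C_\omega(\Gamma)$ is strongly regular. The key observation is that $k = \omega(\omega-1)$ yields $\frac{k}{\omega-1} = \omega$, so in the spectrum of $C_\omega(\Gamma)$ supplied by Remark \ref{rem:eigen}, each transformed eigenvalue $\frac{k}{\omega-1} + \lambda_i - \omega$ collapses to $\lambda_i$. Moreover $m = n$ means the $-\omega$ eigenvalue appears with multiplicity $m - n = 0$. Hence the spectrum of $C_\omega(\Gamma)$ is exactly $k^1, r^f, s^g$, identical to that of $\Gamma$.

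To finish, I would invoke that $\Gamma$ and $C_\omega(\Gamma)$ now share both the same number of vertices and the same spectrum, so since the full parameter set $(n,k,\lambda,\mu)$ of a non-boring srg is recoverable from its spectrum (through $n = 1 + f + g$, $k$ the largest eigenvalue, $\lambda - \mu = r + s$, and $k - \mu = -rs$), the two graphs must have identical parameters. I do not expect a substantive obstacle; the work is essentially the single substitution $\frac{k}{\omega-1} = \omega$, and the only point requiring care is making explicit that matching the entire spectrum — not just the eigenvalues — is what pins down all four parameters simultaneously.
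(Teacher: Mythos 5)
Your proposal is correct and follows essentially the same route as the paper: the paper's proof also gets the forward direction from the vertex count $\frac{nk}{\omega(\omega-1)}=n$, and the converse by observing that $k=\omega(\omega-1)$ makes the spectra of $\Gamma$ and $C_\omega(\Gamma)$ coincide, so the parameters coincide since the spectrum of a non-boring srg determines them. You merely spell out the details (the substitution $\frac{k}{\omega-1}=\omega$ in Remark \ref{rem:eigen} and the recovery of parameters from the spectrum) that the paper leaves as ``it follows'' and ``it is easily provable.''
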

\begin{proof}
    If $k=\omega(\omega-1)$, then it follows that $\Gamma$ and $C_\omega(\Gamma)$ have the same spectrum, and it is easily provable that strongly regular graphs with the same spectrum have the same parameters. Conversely if $\Gamma$ and $C_\omega(\Gamma)$ have the same parameters then $\frac{nk}{\omega(\omega -1)}=n$ which implies $k=\omega(\omega -1)$.
\end{proof}

We will now give a necessary condition on strongly regular graphs that are also regular clique assemblies.
\begin{lem}
    If $\Gamma$ is an srg$(n,k,\omega -2, \mu)$ and is $\omega$-clique regular, \linebreak then $k \geq \mu(\omega -1)$.
\end{lem}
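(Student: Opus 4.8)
The plan is to translate the inequality $k \geq \mu(\omega-1)$ into a pigeonhole statement about the $\omega$-cliques through a single vertex. First I would record a structural fact that follows from the hypothesis $\lambda = \omega - 2$: if $c$ is an $\omega$-clique and $v$ is a vertex not in $c$, then $v$ is adjacent to at most one vertex of $c$. Indeed, if $v$ were adjacent to two vertices $x, x' \in c$, then $v$ would be a common neighbor of the adjacent pair $x, x'$; but the $\omega - 2$ remaining vertices of $c$ are already common neighbors of $x$ and $x'$, and since $\lambda = \omega - 2$ these exhaust all such common neighbors, forcing $v \in c$, a contradiction. This step uses $\lambda = \omega - 2$ crucially.

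Next I would count the $\omega$-cliques through a fixed vertex $x$. Since $\Gamma$ is $\omega$-clique regular, every edge incident to $x$ lies in a unique $\omega$-clique, and each such clique accounts for exactly $\omega - 1$ of the edges at $x$; distinct cliques through $x$ meet only in $x$ and hence share no edge at $x$. Because $\Gamma$ is $k$-regular, the $k$ edges at $x$ partition into groups of $\omega - 1$, so $x$ lies in exactly $\frac{k}{\omega-1}$ cliques (this is the same count implicit in the proof of Lemma \ref{lem:blockmat}).

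The heart of the argument is to fix a non-adjacent pair $x, y$ and examine their $\mu$ common neighbors $z_1, \dots, z_\mu$. Each $z_l$ is adjacent to $x$, hence lies in a unique $\omega$-clique $c_l$ through $x$. I claim these cliques are pairwise distinct. If instead $z_l$ and $z_{l'}$ (with $l \neq l'$) lay in a common clique $c^*$ through $x$, then $c^*$ contains $x$, so $y \notin c^*$ because $y \not\sim x$; but $y$ is adjacent to both $z_l$ and $z_{l'}$, two distinct vertices of $c^*$, contradicting the structural fact above. Thus $z \mapsto c_l$ injects the $\mu$ common neighbors into the set of cliques through $x$, giving $\mu \leq \frac{k}{\omega-1}$, equivalently $k \geq \mu(\omega-1)$.

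The main obstacle is isolating the structural fact that an external vertex meets an $\omega$-clique in at most one vertex; once that is in hand, the rest is a clean pigeonhole that pits the $\mu$ common neighbors of a non-adjacent pair against the $\frac{k}{\omega-1}$ cliques through one of those vertices. I would also note the degenerate case in which $\Gamma$ has no non-adjacent pair, where $\mu$ is vacuous and the inequality is trivial.
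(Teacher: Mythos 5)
Your proof is correct, and it reaches the bound by the same pigeonhole skeleton as the paper — injecting the $\mu$ common neighbors of a non-adjacent pair into the $\frac{k}{\omega-1}$ cliques through one vertex of the pair — but the way you establish injectivity is genuinely different and more self-contained. The paper first invokes Theorem \ref{thm:rca} to conclude that $\Gamma$ is a regular clique assembly, then cites Proposition 1 of Guest et al.\ \cite{guest} to decompose each neighborhood $N(v)$ into $\frac{k}{\omega-1}$ disjoint copies of $K_{\omega-1}$, and derives a contradiction by extending the triangles $\{x,y,u\}$ and $\{x,y,v\}$ to two distinct $\omega$-cliques through the edge $xy$ (contradicting clique regularity). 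You instead prove a local fact directly from $\lambda = \omega-2$: a vertex outside an $\omega$-clique has at most one neighbor in it, since two neighbors would exceed the exact count of $\omega-2$ common neighbors of an adjacent pair; injectivity and the contradiction then follow from that fact alone. What your route buys is independence from the regular clique assembly machinery and from \cite{guest}, plus a uniform treatment of $\omega = 2$ (the paper splits that case off separately, since its structural input needs $\omega \geq 3$); what the paper's route buys is that the neighborhood decomposition it quotes is reusable structure theory, placing the lemma inside the RCA framework the paper is developing. One cosmetic note: your clique count $\frac{k}{\omega-1}$ through a vertex is exactly the computation in Lemma \ref{lem:blockmat}(2), so in the paper's context you could simply cite it rather than re-derive it.
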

\begin{proof}
    If $\omega=2$, then the theorem is shown since $k\geq \mu $ for all strongly regular graphs. So assume $\omega \geq 3$. Recall that $\Gamma$ is a regular clique assembly by Theorem \ref{thm:rca}. So from \cite{guest} Proposition 1, the neighborhood of each vertex in $\Gamma$ is a disjoint union of $K_{\omega -1}$'s with $\frac{k}{\omega -1}$ components. Let vertices $v$ and $u$ be nonadjacent in $\Gamma$. Suppose $x$ and $y$ are distinct vertices in $N(v)\cap N(u)$. If $x$ and $y$ are in the same $K_{\omega -1}$ component of $N(v)$, then since $\Gamma$ is a regular clique assembly, the clique $\{x,y,u\}$ is contained in an $\omega$-clique containing $u$ and the clique $\{x,y,v\}$ is contained in a different $\omega$-clique containing $v$. But this is a contradiction since the edge $\{x,y\}$ is in two different $\omega$-cliques. So $x$ and $y$ must be in distinct $K_{\omega-1}$ components in $N(v)$ showing there exists an injective mapping from $N(v)\cap N(u)$ to the disjoint $K_{\omega-1}$ components in $N(v)$. This implies $\frac{k}{\omega -1} \geq \mu$ from which the result follows.
\end{proof}
\section{Examples and Applications}
\subsection{Orthogonal Array Block Graphs}
\tab An \textbf{orthogonal array}, denoted $OA(n,m)$, is an $n^2 \times m$ array with entries from an $n$-element set with the property that the rows of any $n^2 \times 2$ sub array consist of all $n^2$ possible pairs exactly once. The \textbf{block graph} of an orthogonal array is the graph with vertices as the $1 \times m$ row vectors of the $OA(n,m)$, where two vectors are adjacent if and only if they have nonempty intersection. This means they share the same entry in exactly one position, since by the construction of the orthogonal array no two vectors can share the same entry in more than one position.
    \begin{figure}[H]
    \begin{subfigure}{.5\textwidth}
      \centering

\begin{tabular}{|c|c|c|}
\hline
$1$ & $1$ & $1$ \\
\hline
$1$ & $2$ & $2$ \\
\hline
$1$ & $3$ & $3$ \\
\hline
$2$ & $1$ & $2$ \\
\hline
$2$ & $2$ & $3$ \\
\hline
$2$ & $3$ & $1$ \\
\hline
$3$ & $1$ & $3$ \\
\hline
$3$ & $2$ & $1$ \\
\hline
$3$ & $3$ & $2$ \\
\hline
\end{tabular}

      \caption{An OA$(3,3)$ and}
    
    \end{subfigure}
    \begin{subfigure}{.5\textwidth}
      \centering
      \includegraphics[width=0.7\linewidth]{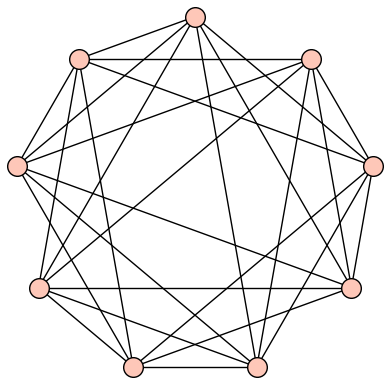}
      \caption{its Block Graph}
      
    \end{subfigure}
    \label{fig:oa_block}
    \caption{}
    \end{figure}
A clique in an orthogonal array block graph is called a \textbf{canonical clique}, denoted $S_{ri}$, if every vector in the clique shares the same entry $i$ in the same column $r$. Clearly every maximal clique of this form has order $n$.
\begin{lem} \label{lem: OA}
    If $\Gamma$ is the block graph of  an $OA(n,m)$ and $n>(m-1)^2$, then every clique of order $n$ is a canonical clique.
\end{lem}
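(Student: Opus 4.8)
The plan is to argue by contradiction: suppose $K$ is a clique of order $n$ in the block graph that is \emph{not} canonical, and derive $n \le (m-1)^2$. The central objects are the canonical sub-cliques of $K$: for a vector $v \in K$ and a column $r$, the set of vectors of $K$ sharing $v$'s entry in column $r$ is itself a clique (any two such vectors agree in column $r$), and it sits inside a canonical clique $S_{ri}$. Since $K$ is a clique, every other vector agrees with $v$ in exactly one column, so writing $d_r(v)$ for the number of vectors of $K \setminus \{v\}$ sharing $v$'s entry in column $r$, I have
\[\sum_{r=1}^m d_r(v) = n-1,\]
and the canonical sub-clique through $v$ in column $r$ has order exactly $d_r(v)+1$.

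The key step is to bound the order of the largest canonical sub-clique $S \subseteq K$. First I would note that if $S$ had order $n$, then all of $K$ would share a common entry in some column $r$, forcing $K = S_{ri}$ (since $|S_{ri}| = n$) and contradicting non-canonicity; hence there is a vector $w \in K \setminus S$. Next I would invoke the orthogonal array axiom to show that $w$ meets the vectors of $S$ in distinct columns: if $w$ agreed with two vectors $u_1, u_2 \in S$ in the same column $r'$, then $u_1$ and $u_2$ would share an entry both in $r'$ and in the column defining $S$, hence agree in two positions, which is impossible in the orthogonal array. Because $w$ is adjacent to every member of $S$, and the agreement column of $w$ with any such member is never the defining column of $S$, these $|S|$ agreements occupy $|S|$ distinct columns among the remaining $m-1$, giving $|S| \le m-1$.

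Finally I would combine the two ingredients. Every canonical sub-clique of $K$ is no larger than the maximal one, so $d_r(v)+1 \le m-1$, i.e.\ $d_r(v) \le m-2$, for every column $r$. Summing over the $m$ columns and using the identity above yields
\[n-1 = \sum_{r=1}^m d_r(v) \le m(m-2) = (m-1)^2 - 1,\]
hence $n \le (m-1)^2$, contradicting the hypothesis $n > (m-1)^2$. Therefore $K$ must be canonical, and $|K| = n = |S_{ri}|$ then forces $K = S_{ri}$.

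I expect the main obstacle to be the middle step, the distinct-columns argument bounding $|S| \le m-1$, since it is where the orthogonal array property (no two rows agree in more than one position) is genuinely used and where one must check that $w$'s agreement column with each member of $S$ both avoids the defining column of $S$ and never repeats. The opening setup and the closing pigeonhole count are routine once that bound is established.
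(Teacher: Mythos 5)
Your proof is correct, but there is little in the paper to compare it against step by step: the paper gives no proof of this lemma at all, deferring entirely to a citation of Godsil (Corollary 5.5.3, p.~99), so your argument supplies what the paper leaves to a reference. Your route is the natural elementary one for block graphs of orthogonal arrays, and it holds up. The two uses of the orthogonal-array axiom are exactly right: adjacency in the block graph means agreement in \emph{exactly} one position, which gives $\sum_{r=1}^{m} d_r(v) = n-1$, and two distinct rows can never agree in two positions, which is what makes the agreement columns of $w$ with the members of $S$ pairwise distinct. The point you flagged as the main obstacle is in fact handled by your own setup: since $S$ is the \emph{largest} canonical sub-clique, it is the full intersection $K \cap S_{ri}$, so any $w \in K \setminus S$ has $w_r \neq i$, and therefore its unique agreement column with each $u \in S$ avoids the defining column $r$; the pigeonhole then gives $|S| \leq m-1$, hence $d_r(v) \leq m-2$ for every $v$ and every column, and $n-1 \leq m(m-2) = (m-1)^2 - 1$ contradicts $n > (m-1)^2$. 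The remaining alternative $|S| = n$ is also correctly dispatched, since $|S_{ri}| = n$ forces $K = S_{ri}$, which is where canonicity actually emerges. In short: a sound, self-contained proof of a statement the paper only quotes; whether it coincides with the cited argument cannot be checked from the paper itself, but it is the standard counting proof one would expect there.
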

The proof of this lemma comes from \cite[pg. 99]{godsil} Corollary 5.5.3.
\begin{thm} \label{thm:OA}
    If $\Gamma$ is the block graph of  an $OA(n,m)$ and $n > (m-1)^2$, then $\Gamma$ is $n$-clique regular and $C_n(\Gamma)$ is isomorphic to the complete $m$-partite graph where each independent set of vertices has order $n$.
\end{thm}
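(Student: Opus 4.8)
The plan is to identify the vertices of $C_n(\Gamma)$ with the canonical cliques of $\Gamma$ and then read the complete multipartite structure directly off the defining property of the orthogonal array. First I would record the two counting facts that follow from the definition of $OA(n,m)$, both coming from the fact that every $n^2\times 2$ subarray realizes each ordered pair of entries exactly once: for a fixed column $r$ and entry $i$, exactly $n$ rows carry entry $i$ in column $r$ (so each canonical clique $S_{ri}$ genuinely has order $n$), and for distinct columns $r\neq r'$ and any entries $i,i'$, exactly one row carries $i$ in column $r$ and $i'$ in column $r'$ (so $|S_{ri}\cap S_{r'i'}|=1$).

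To prove $n$-clique regularity I would take an arbitrary edge of $\Gamma$, namely two row vectors agreeing in exactly one coordinate, say entry $i$ in column $r$. Both lie in the canonical clique $S_{ri}$, so the edge lies in at least one $n$-clique. For uniqueness I invoke Lemma \ref{lem: OA}: since $n>(m-1)^2$, every $n$-clique is canonical, hence of the form $S_{r'i'}$, and both endpoints lie in $S_{r'i'}$ only if they share entry $i'$ in column $r'$; as the two vectors agree in the single coordinate $r$ with value $i$, the only such clique is $S_{ri}$. Thus every edge lies in a unique $n$-clique and $\Gamma$ is $n$-clique regular.

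For the clique graph, Lemma \ref{lem: OA} tells me the vertices of $C_n(\Gamma)$ are exactly the canonical cliques $\{S_{ri} : 1\le r\le m,\ 1\le i\le n\}$, which I would first check are pairwise distinct, yielding $mn$ vertices. I then partition them into the $m$ classes $P_r=\{S_{r1},\ldots,S_{rn}\}$, one per column. Using the counting facts above, two canonical cliques in the same class $P_r$ (same column, distinct entries) are vertex-disjoint and hence non-adjacent in $C_n(\Gamma)$, while two cliques in different classes share exactly one vertex and so are adjacent. This is precisely the adjacency of the complete $m$-partite graph with all parts of order $n$, completing the identification.

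I expect the only genuine work to be the intersection count for canonical cliques, and this is exactly where the orthogonal array hypothesis earns its keep; everything else is bookkeeping once Lemma \ref{lem: OA} guarantees that all $n$-cliques are canonical. The bound $n>(m-1)^2$ enters only to license that lemma, and it would be worth a sentence confirming the classes $P_r$ are nonempty and that distinct $S_{ri}$ really are distinct vertices, but no estimate beyond the stated hypothesis is needed.
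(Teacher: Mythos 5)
Your proposal is correct and follows essentially the same route as the paper's proof: both use Lemma \ref{lem: OA} to reduce all $n$-cliques to canonical ones, derive uniqueness of the clique containing an edge from the fact that adjacent row vectors share exactly one entry, and then read off the complete $m$-partite structure by observing that same-column canonical cliques are disjoint while different-column cliques meet in exactly one vector. Your explicit statement of the intersection counts and the remark about checking distinctness of the $S_{ri}$ are slightly more careful bookkeeping than the paper provides, but not a different argument.
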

\begin{proof}
    If two vectors $v$ and $u$ are adjacent in $\Gamma$, then they share the same entry $i$ in some column $r$. This is the only entry that the two vectors share by the construction of the orthogonal array. So the only canonical clique the edge $vu$ is in is $S_{ri}$ and by Lemma \ref{lem: OA} this is the only $n$-clique the edge is in.\\
    Also by Lemma \ref{lem: OA}, the canonical cliques of $\Gamma$ form all the vertices in $C_n(\Gamma)$. Notice that if $i\neq j$ then for all columns $r$, $S_{ri}$ and $S_{rj}$ have an empty intersection so are not adjacent in $C_n(\Gamma)$. Also notice that by the definition of an orthogonal array, if $r\neq t$ then for all $i$ and $j$, $S_{ri}$ and $S_{tj}$ share the vector with entry $i$ in column $r$ and entry $j$ in column $t$ and so are adjacent in $C_n(\Gamma)$. So there are $m$ independent sets of order $n$ in $C_n(\Gamma)$ and each vertex in each set is adjacent to every other vertex of every other independent set. Thus, $C_n(\Gamma)$ is isomorphic to the complete $m$-partite graph where each independent set of vertices has order $n$.
\end{proof}
A \textbf{square rook graph} is the graph with vertices as the points in a square grid where two points are adjacent if and only if they are in the same row or column.
\begin{thm} \label{thm:rook}
    If $\Gamma$ is the square rook graph on $n^2$ vertices, then $\Gamma$ is $n$-clique regular and $C_n(\Gamma) \cong K_{n,n}$.
\end{thm}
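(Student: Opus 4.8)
The plan is to recognize the square rook graph as the line graph of a complete bipartite graph and then read off the classification theorems already proved. First I would set up the identification $\Gamma \cong L(K_{n,n})$. Writing the two parts of $K_{n,n}$ as $\{a_1,\dots,a_n\}$ and $\{b_1,\dots,b_n\}$, the edge $a_ib_j$ corresponds to the grid cell $(i,j)$, and two edges $a_ib_j$ and $a_kb_l$ share an endpoint in $K_{n,n}$ exactly when $i=k$ or $j=l$, i.e. exactly when the cells $(i,j)$ and $(k,l)$ lie in a common row or column. Hence the edge-sharing adjacency of $L(K_{n,n})$ coincides with the rook adjacency of $\Gamma$, so $\Gamma \cong L(K_{n,n})$. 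Since $K_{n,n}$ is connected and $n$-regular, proving the theorem reduces to applying the appropriate earlier result with $\omega = n$.

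For $n \geq 4$ both claims follow at once. Every vertex of $K_{n,n}$ has degree $n = \omega$, so Theorem \ref{thm:4cr} gives that $L(K_{n,n})$ is $n$-clique regular; and since $K_{n,n}$ is connected and $n$-regular, Theorem \ref{thm:4ci} gives $C_n(L(K_{n,n})) \cong K_{n,n}$. Combined with $\Gamma \cong L(K_{n,n})$, this is exactly the conclusion of Theorem \ref{thm:rook}.

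The small cases must be dispatched separately, since Theorems \ref{thm:4cr} and \ref{thm:4ci} require $\omega \geq 4$. For $n = 3$ I would use the $\omega = 3$ analogues: $K_{3,3}$ is connected, is not $K_3$, is triangle-free, and is $3$-regular, so Theorem \ref{thm:3cr} yields $3$-clique regularity of $L(K_{3,3})$; and because $K_{3,3}$ has all vertices of degree $3$ and contains no triangles, conditions (1)--(4) of Theorem \ref{thm:3ci} all hold (the degree-$2$ and triangle conditions holding vacuously), giving $C_3(L(K_{3,3})) \cong K_{3,3}$. For $n = 2$ we have $\omega = 2$, so $\Gamma = L(K_{2,2})$ is a $4$-cycle; it is $2$-clique regular because it has a nonempty edge set, and its $2$-clique graph is its line graph $C_2(\Gamma) = L(\Gamma)$, which is again a $4$-cycle, isomorphic to $K_{2,2}$. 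The case $n = 1$ has no edges and is degenerate, so it is excluded.

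The conceptual content lies entirely in the first step, the identification $\Gamma \cong L(K_{n,n})$; after that everything is a direct appeal to the line-graph classification. The only mild obstacle is the case analysis on $\omega = n$: one must check that the $n = 3$ instance genuinely satisfies the hypotheses of Theorems \ref{thm:3cr} and \ref{thm:3ci}, in particular that the triangle-dependent conditions are vacuously met, which is precisely what lets the triangle-free graph $K_{3,3}$ behave like the $\omega \geq 4$ cases, and that $n = 2$ (and the trivial $n = 1$) are handled by hand rather than by the $\omega \geq 4$ machinery.
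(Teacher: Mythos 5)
Your proposal is correct and is essentially one of the paper's own arguments: the paper proves this theorem three ways (via orthogonal-array block graphs and Theorem \ref{thm:OA}, via the identification $\Gamma \cong L(K_{n,n})$ and Theorems \ref{thm:4cr}--\ref{thm:3ci}, and via the spectrum and Remark \ref{rem:eigen}), and yours is exactly the second. If anything, your version is more careful than the paper's one-sentence appeal to that route, since you explicitly verify the hypotheses in the $n=3$ case (Theorems \ref{thm:3cr} and \ref{thm:3ci}, with the triangle conditions holding vacuously) and dispatch $n=2$ by hand, both of which fall outside the $\omega \geq 4$ machinery.
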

\begin{proof} 
    Since the square rook graph on $n^2$ vertices is the block graph of an $OA(n,2)$, the result follows from Theorem \ref{thm:OA}.\\
    The result also follows from Theorems \ref{thm:4cr} - \ref{thm:3ci} since the square rook graph on $n^2$ vertices is the line graph of $K_{n,n}$ which is $n$-regular and triangle free.\\
    It also follows using the graph's spectrum since the square rook graph on $n^2$ vertices is an srg$(n^2, 2(n-1), n-2, 2)$ with spectrum \[2(n-1)^1,(n-2)^{2(n-1)}, -2^{(n-1)^2}.\] Using Remark \ref{rem:eigen}, the spectrum of $C_n(\Gamma)$ is \[n^1, 0^{2(n-1)}, -n^1\] and a classical result in spectral graph theory states this spectrum uniquely determines that $C_n(\Gamma) \cong K_{n,n}$.
\end{proof}

\subsection{Triangular Graphs}
\tab A \textbf{triangular graph}, denoted $T_n$, is the line graph of the complete graph on $n$ vertices, $K_n$. 
\begin{thm}
    If $n = 3$ or $n>4$, then $T_n$ is $(n-1)$-clique regular and $C_{n-1}(T_n) \cong K_n$.\\
\end{thm}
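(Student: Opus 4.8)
The plan is to write $T_n = L(K_n)$ and to observe that every vertex of $K_n$ has degree $n-1$, so the relevant clique order is $\omega = n-1$. I would split the argument according to the value of $\omega$, since the general classification results (Theorems \ref{thm:4cr} and \ref{thm:4ci}) require $\omega \geq 4$, i.e. $n \geq 5$, whereas $n = 3$ forces $\omega = 2$ and must be treated separately. The whole proof is then a matter of invoking the earlier theorems with $\Gamma = K_n$; essentially no new computation is needed.

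For the principal case $n > 4$, so that $\omega = n-1 \geq 4$: since $K_n$ is $(n-1)$-regular, every vertex has degree exactly $\omega$, and Theorem \ref{thm:4cr} immediately yields that $L(K_n) = T_n$ is $\omega$-clique regular, that is, $(n-1)$-clique regular. For the isomorphism, $K_n$ is connected and $(n-1)$-regular, hence $\omega$-regular, so Theorem \ref{thm:4ci} gives $C_\omega(L(K_n)) \cong K_n$, i.e. $C_{n-1}(T_n) \cong K_n$. Both halves are thus direct applications of the classification theorems.

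For the remaining case $n = 3$, where $\omega = n-1 = 2$: here I would use the elementary facts recorded in the preliminaries rather than the classification theorems, which do not cover $\omega = 2$. Every graph with a nonempty edge set is $2$-clique regular, and $C_2(\Gamma) \cong L(\Gamma)$ for all $\Gamma$. Since $L(K_3) \cong K_3$ by Lemma \ref{lem:whit}, we have $T_3 \cong K_3$, so $T_3$ is $2$-clique regular and $C_2(T_3) \cong L(T_3) \cong L(K_3) \cong K_3 = K_n$, as required.

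The main subtlety is not a genuine obstacle in the proof but rather justifying the hypothesis ``$n = 3$ or $n > 4$.'' I would remark that $n = 4$ is genuinely excluded: there $\omega = 3$ and $K_4$ contains triangles, so by Theorem \ref{thm:3cr} the line graph $L(K_4)$ fails to be $3$-clique regular (indeed $T_4$ is the octahedron, in which every edge lies in two triangles). Pointing this out explains the gap at $n=4$ even though the excluded value never enters the argument.
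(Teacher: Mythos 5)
Your proposal is correct and takes essentially the same route as the paper's primary proof: apply Theorem \ref{thm:4cr} and Theorem \ref{thm:4ci} to $\Gamma = K_n$, which is connected and $(n-1)$-regular, and observe that $n=4$ is excluded because $K_4$ contains triangles. You are in fact slightly more careful than the paper, which cites those two theorems without comment even for $n=3$ (where $\omega = 2$ falls outside their hypotheses) rather than giving your elementary $C_2(T_3) \cong L(K_3) \cong K_3$ argument; the paper also records an alternative spectral proof via Remark \ref{rem:eigen}, which your approach does not require.
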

\begin{proof}
    This follows from Theorems \ref{thm:4cr} and \ref{thm:4ci} since $K_n$ is $(n-1)$-regular. Note that $T_4$ is not included since $K_4$ is $3$-regular but not triangle free.\\
    Alternatively we can prove this using the spectrum of $T_n$ which is \[2(n-2)^1,\;(n-4)^{n-1},\;-2^{\frac{n(n-3)}{2}}\]
    because $T_n$ is an srg$(\frac{1}{2}n(n - 1),\; 2(n - 2),\; n - 2,\; 4)$.
    And since $T_n$ is also $(n-1)$-clique regular and $2(n-2)$-regular, Remark \ref{rem:eigen} gives the spectrum of $C_{n-1}(T_n)$ as \[ (n-1)^1,\; -1^{n-1}.\]
    A well known result in spectral graph theory is that the only graph with this spectrum is the complete graph on $n$ vertices. So $C_{n-1}(T_n) \cong K_n$.
\end{proof}

\subsection{Generalized Quadrangle Collinearity Graphs}
\tab A \textbf{Generalized Quadrangle} $GQ(s,t)$ is a point-line incidence structure satisfying the following properties for some $s,t \geq 1$: \begin{itemize}
    \item every line has $s+ 1$ points,
    \item every point lies on $t + 1$ lines,
    \item there is at most one point on any two distinct lines, and
    \item if $P$ is a point not on line $\ell$, then there is a unique line incident with $P$ and meeting $\ell$.
\end{itemize}
\tab The \textbf{collinearity graph} of a $GQ(s,t)$ is the graph where the vertices are the set of points in the $GQ(s,t)$ where two points are adjacent if and only if there is some line in the $GQ(s,t)$ that they are commonly incident to.
\begin{figure}[H]
    \begin{subfigure}{.5\textwidth}
      \centering
    \includegraphics[width=0.7\linewidth]{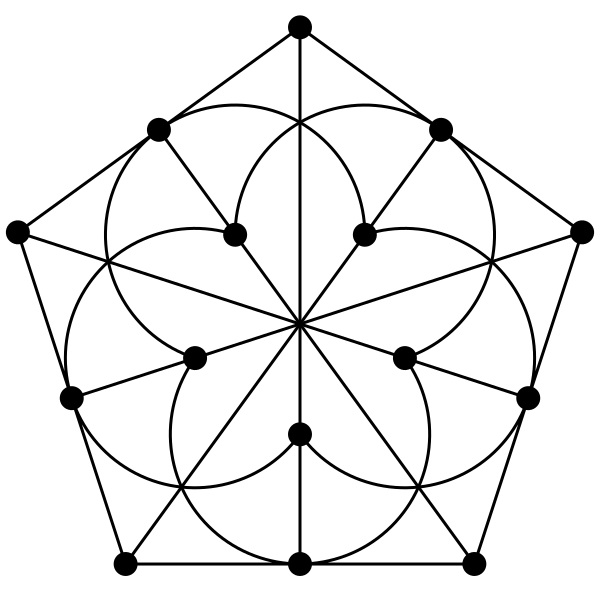}
      \caption{A GQ$(2,2)$ and}
      
    \end{subfigure}
    \begin{subfigure}{.5\textwidth}
      \centering
      \includegraphics[width=0.7\linewidth]{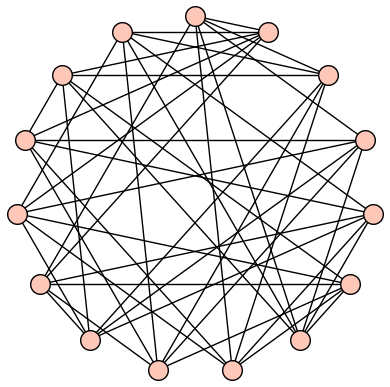}
      \caption{its Colliearity Graph}
      
    \end{subfigure}
    \caption{}
    \end{figure}
    
\begin{thm} \label{thm:gq}
    Suppose $\Gamma$ is the collinearity graph of a $GQ(s,t)$. Then $\Gamma$ is $(s+1)$-clique regular and a regular clique assembly.
\end{thm}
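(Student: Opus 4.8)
The plan is to first pin down exactly what the cliques of $\Gamma$ look like, and the decisive fact is that they are precisely the lines of the $GQ(s,t)$. First I would record the easy observation that two collinear points lie on a \emph{unique} common line: if points $P$ and $Q$ both lay on distinct lines $\ell_1$ and $\ell_2$, then $\ell_1$ and $\ell_2$ would share the two points $P$ and $Q$, violating the axiom that two distinct lines meet in at most one point. Since each line carries $s+1$ mutually collinear points, every line is an $(s+1)$-clique of $\Gamma$, and every edge of $\Gamma$ lies on at least one such clique.

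The heart of the argument is to show that $\Gamma$ contains no ``triangle'' in the incidence-geometric sense, i.e.\ any three pairwise-collinear points already lie on a single line. Suppose $P,Q,R$ are pairwise collinear but not all on one line; then $PQ$, $QR$, $PR$ are three \emph{distinct} lines, and $P$ is not incident to the line $QR$. The fourth $GQ$ axiom grants a unique line through $P$ meeting $QR$; but $PQ$ meets $QR$ at $Q$ and $PR$ meets $QR$ at $R$, giving two distinct lines through $P$ that meet $QR$, a contradiction. Hence every clique of $\Gamma$ is contained in a single line, so the maximal cliques of $\Gamma$ are exactly the lines, each of order $s+1$. Combined with the uniqueness above, every edge lies in exactly one $(s+1)$-clique, which is $(s+1)$-clique regularity.

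It remains to see that $\Gamma$ is a regular clique assembly. I would first verify regularity: a point $P$ lies on $t+1$ lines, each contributing $s$ further collinear points, and these neighbor sets are pairwise disjoint (two lines through $P$ cannot meet again without sharing two points), so $d(P)=s(t+1)$ for every $P$. From here there are two equivalent routes. Directly, the previous paragraph shows every maximal clique is a line of order $s+1$---so all maximal cliques are maximum---and each edge lies in exactly one such clique, matching the definition of a regular clique assembly (its clique number $s+1\geq 2$ since $s\geq 1$). Alternatively, the common neighbors of adjacent $P,Q$ are exactly the other $s-1$ points of their common line, since any outside common neighbor would create a forbidden triangle; thus $\Gamma$ is an $\mathrm{erg}(n,\,s(t+1),\,s-1)$ with $s-1=(s+1)-2=\omega-2$, and Theorem \ref{thm:rca} yields the conclusion immediately.

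The genuinely load-bearing step is the no-triangle lemma; everything else is bookkeeping once the cliques are known to coincide with the lines. I expect no real obstacle beyond carefully invoking the fourth axiom in the triangle argument and confirming the neighbor-disjointness needed for regularity.
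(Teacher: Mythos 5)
Your proposal is correct, and its core is the same as the paper's: identify the $(s+1)$-cliques of $\Gamma$ with the lines of the $GQ(s,t)$, which together with the uniqueness of the line through two collinear points gives $(s+1)$-clique regularity. The difference is in how much is proved versus cited. The paper simply asserts that generalized quadrangles ``contain no triangles'' as following from the definition, whereas you prove this as a lemma from the fourth axiom (two distinct lines $PQ$ and $PR$ through $P$ meeting the line $QR$ would contradict uniqueness); this is exactly the load-bearing fact, and spelling it out makes the argument self-contained. For the regular clique assembly part, the paper cites the well-known fact that $\Gamma$ is an srg$((s+1)(st+1), s(t+1), s-1, t+1)$ and then invokes Theorem \ref{thm:rca}, while your primary route verifies the definition of a regular clique assembly directly (all maximal cliques are lines, hence maximum, and regularity follows from counting the $t+1$ pairwise internally disjoint lines through a point); your alternative route, deriving the erg$(n, s(t+1), s-1)$ parameters from the no-triangle lemma and then applying Theorem \ref{thm:rca}, matches the paper except that it proves the edge-regularity rather than citing it. What the paper's version buys is brevity by leaning on standard GQ facts; what yours buys is a proof from the axioms alone, at the cost of a somewhat longer argument.
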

\begin{proof}
     Observe that the $s+1$ points on any line in the $GQ(s,t)$ induce an $(s+1)$-clique in $\Gamma$. It follows from the definition that generalized quadrangles contain no triangles, so the only $(s+1)$-cliques in $\Gamma$ are those induced by the set of points on some line. So if points $v$ and $u$ are adjacent in $\Gamma$ then they are commonly incident to some line $\ell$ in $GQ(s,t)$. So the edge $uv$ is in the $(s+1)$-clique induced by the points on line $\ell$ and points $u$ and $v$ are commonly incident to no other lines, so the edge $uv$ is in a unique $(s+1)$-clique in $\Gamma$.\\
    Since the collinearity graph of a $GQ(s,t)$ is an srg$((s+1)(st+1), s(t+1), s-1, t+1)$, then $\Gamma$ is a regular clique assembly by Theorem \ref{thm:rca}.
\end{proof}
For the following theorem, recall that the \textbf{dual} of a point line incidence structure is the structure formed by swapping the points and the lines of the original structure. If $\beta$ is a point line incidence structure, then point $P$ is on line $\ell$ in $\beta$ if and only if point $\ell$ is on line $P$ in the dual of $\beta$. In the case of generalized quadrangles, it turns out that the dual of a $GQ(s,t)$ is a $GQ(t,s)$ and so has a strongly regular collinearity graph. The following theorem gives an alternate proof that the dual of a $GQ(s,t)$ has a strongly regular collinearity graph.
\begin{thm} \label{thm:gq1}
    If $\Gamma$ is the collinearity graph of a $GQ(s,t)$, then $C_{s+1}(\Gamma)$ is isomorphic to the collinearity graph of the dual of the $GQ(s,t)$ structure that formed $\Gamma$ and is strongly regular with parameters 
    \[\text{srg}(\left(t+1\right)\left(st+1\right), \; t(s+1),\; t-1, \;s+1).\]
\end{thm}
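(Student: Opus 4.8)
The plan is to establish the two assertions — the isomorphism to the dual's collinearity graph, and the strongly regular parameters — in that order, leaning on Theorem~\ref{thm:gq} for the structural bookkeeping and on Theorem~\ref{thm:srg} (together with the parameter formula already recorded in Theorem~\ref{thm:gq}) for the numerics.

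First I would pin down the vertices of $C_{s+1}(\Gamma)$. The proof of Theorem~\ref{thm:gq} shows that the only $(s+1)$-cliques in $\Gamma$ are exactly those induced by the point set of a line of the $GQ(s,t)$, and that $\Gamma$ is $(s+1)$-clique regular. The assignment ``line $\mapsto$ its point set'' is injective, since two distinct lines meet in at most one point while each line carries $s+1\geq 2$ points, and it is surjective onto the $(s+1)$-cliques by the cited fact; hence the vertex set of $C_{s+1}(\Gamma)$ is in canonical bijection with the line set of the $GQ(s,t)$, which is precisely the point set of the dual $GQ(t,s)$. This bijection is the candidate isomorphism, and the main content of the first half of the proof is simply verifying that it preserves adjacency.

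For that verification, note that two $(s+1)$-cliques are adjacent in $C_{s+1}(\Gamma)$ exactly when they share a common vertex, i.e.\ when the two corresponding lines of the $GQ(s,t)$ meet in a common point. Passing to the dual, two points of $GQ(t,s)$ (original lines) are collinear precisely when they lie on a common line of the dual (an original point), that is, when the two original lines share a point. Because a generalized quadrangle has at most one point on any two distinct lines, ``sharing a common point'' is an unambiguous relation, and the two adjacency conditions coincide. Thus the bijection is a graph isomorphism and $C_{s+1}(\Gamma)$ is the collinearity graph of the dual $GQ(t,s)$.

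Finally, for the strong regularity the quickest route is to invoke that the dual is itself a $GQ(t,s)$, whose collinearity graph is strongly regular with the parameters of the formula in Theorem~\ref{thm:gq} after interchanging the roles of $s$ and $t$, namely $\text{srg}((t+1)(st+1),\,t(s+1),\,t-1,\,s+1)$. The consistency check I would actually carry out is the spectral derivation via Theorem~\ref{thm:srg}: for $\Gamma$ an $\text{srg}((s+1)(st+1),s(t+1),s-1,t+1)$ one computes the smallest eigenvalue to be $-(t+1)$, and with $\omega=s+1$ and $k=s(t+1)$ one checks $-(t+1)=\tfrac{-k}{\omega-1}$, so the hypothesis of Theorem~\ref{thm:srg} is met and $C_{s+1}(\Gamma)$ is strongly regular; since $\lambda=s-1=\omega-2$, the explicit formulas there give $\lambda^*=\tfrac{k}{\omega-1}-2=t-1$ and $\mu^*=\mu+\omega-\tfrac{k}{\omega-1}=s+1$, with vertex count $\tfrac{nk}{\omega(\omega-1)}=(t+1)(st+1)$ and degree $\omega(\tfrac{k}{\omega-1}-1)=t(s+1)$, matching the claim. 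I expect no genuine obstacle beyond a notational one: the symbols $s,t$ denote the $GQ$ parameters while Theorem~\ref{thm:srg} reserves $s$ for the smallest eigenvalue of $\Gamma$, so the one place demanding care is keeping these two meanings distinct when applying that theorem.
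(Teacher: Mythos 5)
Your proposal is correct and takes essentially the same approach as the paper: the same line-to-clique bijection with the same two-directional adjacency verification for the isomorphism, and the same application of Theorem \ref{thm:srg} (smallest eigenvalue $-(t+1) = \tfrac{-k}{\omega-1}$ with $\omega = s+1$, $k = s(t+1)$, plus the $\lambda^*$, $\mu^*$ formulas since $\lambda = \omega - 2$) for the strong regularity and parameters. The one caution is that your suggested ``quickest route'' --- citing that the dual is a $GQ(t,s)$ and hence has a strongly regular collinearity graph --- is exactly the fact this theorem is framed as giving an \emph{alternate} proof of, so the paper deliberately avoids it; the spectral derivation you carried out as a ``consistency check'' is the paper's actual argument and should be promoted to the primary one.
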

\begin{proof}
    Let $\Gamma$ be the collinearity graph of a $GQ(s,t)$. As shown in Theorem \ref{thm:gq}, there exists a bijection between the lines of the $GQ(s,t)$ and the $(s+1)$-cliques of $\Gamma$ that sends a line in the $GQ(s,t)$ to the set of points in $\Gamma$ that are incident to that line. Since the lines in the $GQ(s,t)$ form the points in the structure's dual, this is a bijection between the vertices of $C_{s+1}(\Gamma)$ and the points on the collinearity graph of the dual of the $GQ(s,t)$. We will show this bijection is an isomorphism. If two points $u$ and $v$ are adjacent in the collinearity graph of the dual of the $GQ(s,t)$, then they are commonly incident to some line $\ell$ in the dual of the $GQ(s,t)$. This means that point $\ell$ is incident to lines $u$ and $v$ in the $GQ(s,t)$ and so the $(s+1)$-cliques they form in $\Gamma$ share a vertex and are adjacent in $C_{s+1}(\Gamma)$. If these two points are not adjacent in the collinearity graph of the dual of the $GQ(s,t)$, then there is no line that they are commonly incident to. So in the $GQ(s,t)$, lines $u$ and $v$ share no common point and so their $(s+1)$-cliques in $\Gamma$ share no vertices and are therefore not adjacent in $C_{s+1}(\Gamma)$. So the collinearity graph of the dual of the $GQ(s,t)$ that formed $\Gamma$ is isomorphic to the $(s+1)$-clique graph of $\Gamma$.\\
    Now we will show that $C_{s+1}(\Gamma)$ is strongly regular. As above, it is well known that $\Gamma$ is an srg$((s+1)(st+1),s(t+1),s-1, t+1)$ so it has the spectrum \[s(t+1)^1,(s-1)^{\frac{st(t+1)(s+1)}{s+t}},(-t-1)^{\frac{s^2(st+1)}{s+t}}.\]
    Since $\Gamma$ is $(s+1)$-clique regular and a regular clique assembly from Theorem \ref{thm:gq}, and the smallest eigenvalue of $\Gamma$ is equal to $-\frac{s(t+1)}{(s+1)-1}=-t-1$, then Theorem \ref{thm:srg} tells us that $C_{s+1}(\Gamma)$ is strongly regular with parameters as above.
\end{proof}
\begin{exmp}
    The collinearity graph $\Gamma$ of a $GQ(3,5)$ is an srg$(64,18,2,6)$. The spectrum of $\Gamma$ is $2^{45}, -6^{18}, 18^1$. Applying Theorem \ref{thm:gq1}, gives us that $C_4(\Gamma$) is the collinearity graph of a $GQ(5,3)$ and is strongly regular with the parameters srg$(96,20,4,4)$ and spectrum
    \[4^{45},-4^{50},20^1\] which is verified using SAGE.
    An added note is that the srg(64,18,2,6) with automorphism group of size 138240 is the collinearity graph of a $GQ(3,5)$ and its $4$-clique graph, an srg(96,20,4,4), has an automorphism group of equal size.\cite{will}
\end{exmp}
\subsection{Locally Linear Graphs}
\tab One of the most interesting families of graphs that these general formulas apply to is the \textbf{locally linear} graphs. A locally linear graph is a graph in which each edge is in a unique triangle. From this definition locally linear graphs are exactly the $3$-clique regular graphs since a $3$-clique is a triangle. Taking the theorems and corollaries for the $\omega$-clique graphs and $\omega$-clique regular graphs and applying them in the case where $\omega=3$ gives theorems that are applicable to locally linear graphs. The 3-clique graph is the graph of triangles where two triangles are adjacent if and only if they share exactly one vertex. The general theorems when applied to the $\omega=3$ case give the following corollaries:
\begin{cor} \label{cor:ll1}
    Suppose $\Gamma$ is locally linear and the eigenvalues of $L(\Gamma)$ are $\mu_1 \leq \cdots \leq \mu_m$. Then for each eigenvalue $\lambda$ of $C_3(\Gamma)$,
    \[\frac{1}{4}\left(3\mu_1 -6\right) \leq \lambda \leq \frac{1}{4}\left(3\mu_m -6\right).\]
\end{cor}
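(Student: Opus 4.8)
The plan is to obtain this as the immediate $\omega = 3$ specialization of Theorem \ref{thm:bounds}. Since the surrounding text establishes that locally linear graphs are exactly the $3$-clique regular graphs, $\Gamma$ satisfies the hypotheses of Theorem \ref{thm:bounds} with $\omega = 3$, so I may quote its conclusion verbatim and then simplify the constants.

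First I would reconcile the indexing. With $\omega = 3$ one has $\binom{\omega}{2} = 3$, so the upper index $m\binom{\omega}{2}$ appearing in Theorem \ref{thm:bounds} equals $3$ times the number of $3$-cliques of $\Gamma$, which is precisely the number of edges of $\Gamma$, i.e.\ the number of eigenvalues of $L(\Gamma)$. The corollary simply renames this count $m$, so the ordered list $\mu_1 \leq \cdots \leq \mu_{m\binom{\omega}{2}}$ of the theorem is the list $\mu_1 \leq \cdots \leq \mu_m$ of the corollary, with $\mu_1$ and $\mu_m$ the least and greatest eigenvalues of $L(\Gamma)$.

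Then I would substitute the constants. The prefactor $\frac{\omega}{\omega-1}$ becomes $\frac{3}{2}$, and the inner term $\frac{\mu_1}{2} - \omega + 2$ becomes $\frac{\mu_1}{2} - 1$, so the lower bound of Theorem \ref{thm:bounds} collapses to $\frac{3}{2}\bigl(\frac{\mu_1}{2} - 1\bigr) = \frac{1}{4}(3\mu_1 - 6)$; the identical computation on the upper bound gives $\frac{1}{4}(3\mu_m - 6)$. These match the claimed inequalities exactly. There is no genuine obstacle here—the only point requiring care is the index bookkeeping just described, ensuring that the $\mu_1$ and $\mu_m$ named in the corollary are correctly identified with the extreme eigenvalues of $L(\Gamma)$ to which Theorem \ref{thm:bounds} refers.
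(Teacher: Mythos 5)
Your proposal is correct and matches the paper's approach exactly: the paper offers no separate argument for Corollary \ref{cor:ll1}, treating it precisely as the $\omega=3$ specialization of Theorem \ref{thm:bounds} (using that locally linear graphs are the $3$-clique regular graphs). Your index bookkeeping, identifying the corollary's $m$ with the theorem's $m\binom{\omega}{2}$, and your constant substitutions $\frac{3}{2}\bigl(\frac{\mu}{2}-1\bigr)=\frac{1}{4}(3\mu-6)$ are both accurate.
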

\begin{cor} \label{cor:ll2}
    Let $\Gamma$ be locally linear. Then for each eigenvalue $\lambda$ of $C_3(\Gamma)$,
    \[-3 \leq \lambda \leq 3\left(\frac{\Delta(\Gamma)}{2}-1\right).\]
\end{cor}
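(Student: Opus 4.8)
The plan is to obtain this bound as the immediate specialization of Corollary \ref{cor:eigens} to the case $\omega = 3$. The paper has already established, in the subsection on clique regular graphs, that the locally linear graphs are precisely the $3$-clique regular graphs: since a $3$-clique is a triangle, the locally linear condition (every edge lies in a unique triangle) is exactly the condition that every edge lies in a unique $3$-clique. Thus any locally linear $\Gamma$ satisfies the hypothesis of Corollary \ref{cor:eigens} with $\omega = 3$, and its $3$-clique graph $C_3(\Gamma)$ is the object whose eigenvalues we must bound.

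Having made this identification, I would simply invoke Corollary \ref{cor:eigens}, which states that for an $\omega$-clique regular graph every eigenvalue $\lambda$ of $C_\omega(\Gamma)$ obeys $-\omega \leq \lambda \leq \omega\left(\frac{\Delta(\Gamma)}{\omega-1}-1\right)$. Substituting $\omega = 3$ yields the lower bound $-3 \leq \lambda$ directly, while the upper bound becomes $\lambda \leq 3\left(\frac{\Delta(\Gamma)}{3-1}-1\right) = 3\left(\frac{\Delta(\Gamma)}{2}-1\right)$, which is exactly the claimed inequality.

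There is essentially no obstacle here: the entire substance of the argument is already contained in Corollary \ref{cor:eigens}, which itself rests on Theorem \ref{thm:bounds} (via eigenvalue interlacing) together with the two classical line-graph spectral facts $-2 \leq \mu_1$ and $\mu_m \leq 2(\Delta(\Gamma)-1)$. The only things one must verify are the arithmetic simplification $\frac{3}{3-1} = \frac{3}{2}$ and the point that the clique number appearing in the general statement is forced to be $3$ by the locally linear structure. Consequently the proof reduces to a one-line citation of the general corollary, preceded by the observation that ``locally linear'' and ``$3$-clique regular'' are synonymous. The purpose of stating it separately as Corollary \ref{cor:ll2} is presentational, making the general machinery directly accessible to readers working only with locally linear graphs.
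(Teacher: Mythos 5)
Your proposal is correct and matches the paper exactly: the paper presents Corollary \ref{cor:ll2} (without a separate proof) as the direct specialization of Corollary \ref{cor:eigens} to $\omega = 3$, using the same identification of locally linear graphs with $3$-clique regular graphs. The substitution $\omega = 3$ giving $-3 \leq \lambda \leq 3\left(\frac{\Delta(\Gamma)}{2}-1\right)$ is all that is required, just as you describe.
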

\begin{figure}[H]
    \centering
    \includegraphics[width=0.5\linewidth]{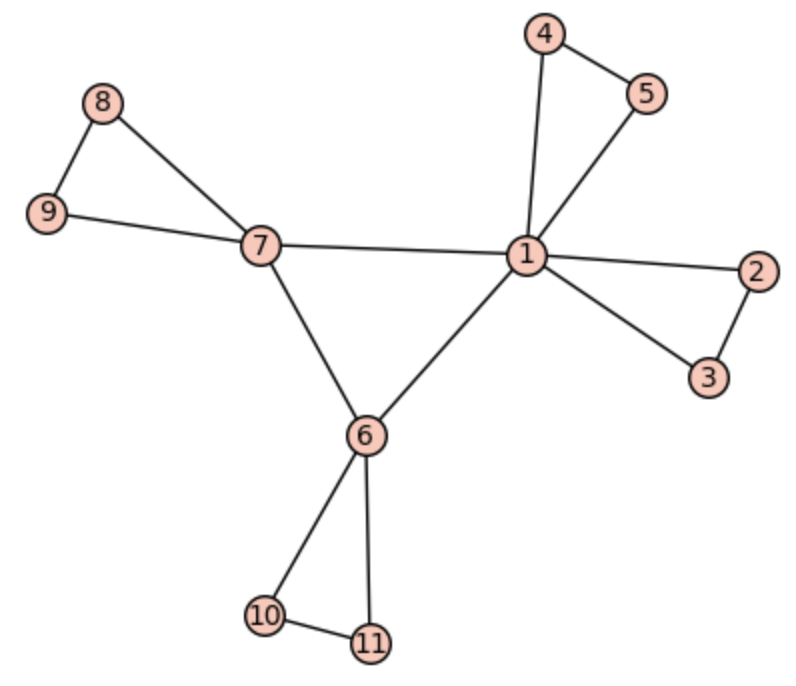}
    \caption{}
    \label{fig:cactus}
\end{figure}

\begin{exmp}
    The figure \ref{fig:cactus} is a locally linear graph $\Gamma$. The $3$-clique graph of $\Gamma$ will have 5 vertices and 5 edges. Corollary \ref{cor:ll1}, tells us that the eigenvalues of $C_3(\Gamma$) will fall in the range $[-3,2.789]$. The largest eigenvalue of $C_3(\Gamma)$ is 2.3429 and the smallest eigenvalue is -1.8136 which both fall within this range.
\end{exmp}
\begin{cor} \label{cor:ll3}
    If $\Gamma$ is $k$-regular and locally linear, then 
    \[p(C_3(\Gamma); \lambda)=(\lambda+3)^{\frac{nk}{6}-n}p\left(\Gamma;\lambda+3-\frac{k}{2}\right).\]
\end{cor}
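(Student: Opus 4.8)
The plan is to recognize that this corollary is simply the specialization of Theorem \ref{thm:eigen} to the case $\omega = 3$. As established in the surrounding text, a graph is locally linear precisely when it is $3$-clique regular, so the hypotheses of Theorem \ref{thm:eigen} are met whenever $\Gamma$ is $k$-regular and locally linear. Thus no new argument is needed; the entire content lies in substituting $\omega = 3$ into the general formula and simplifying the resulting constants.

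Concretely, I would first invoke Theorem \ref{thm:eigen}, which gives
\[
p(C_\omega(\Gamma); \lambda)=(\lambda+\omega)^{m-n}\,p\!\left(\Gamma;\lambda+\omega-\tfrac{k}{\omega - 1}\right),
\qquad m=\tfrac{nk}{\omega(\omega-1)}.
\]
Then I would set $\omega = 3$ throughout. This turns $\omega - 1$ into $2$, so $\tfrac{k}{\omega-1}$ becomes $\tfrac{k}{2}$ and the shift in the second factor becomes $\lambda + 3 - \tfrac{k}{2}$; it turns $\lambda + \omega$ into $\lambda + 3$; and it turns the exponent $m = \tfrac{nk}{\omega(\omega-1)}$ into $\tfrac{nk}{6}$, so that $m - n = \tfrac{nk}{6} - n$. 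Collecting these substitutions yields exactly
\[
p(C_3(\Gamma); \lambda)=(\lambda+3)^{\frac{nk}{6}-n}\,p\!\left(\Gamma;\lambda+3-\tfrac{k}{2}\right),
\]
which is the claimed identity.

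Since every step is a direct substitution into an already-proved theorem, there is no real obstacle here; the only thing to be careful about is arithmetic bookkeeping with the binomial-free constants (checking that $\omega(\omega-1)=6$ and $\omega-1=2$ are plugged in consistently in both the exponent and the argument shift). I would therefore keep the proof to a single line citing Theorem \ref{thm:eigen} with $\omega = 3$, rather than reworking the block-matrix determinant argument underlying it.
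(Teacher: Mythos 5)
Your proposal is correct and matches the paper's own (implicit) argument exactly: the paper presents Corollary \ref{cor:ll3} as a direct specialization of Theorem \ref{thm:eigen} to $\omega = 3$, using the fact that locally linear graphs are precisely the $3$-clique regular graphs. Your substitutions ($\omega - 1 = 2$, $\omega(\omega-1) = 6$, $\lambda + \omega = \lambda + 3$) are all carried out correctly, so nothing is missing.
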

Note that a strongly regular graph is locally linear precisely if it is an srg($n,k,1,\mu$).
\begin{cor} \label{cor:ll4}
    Suppose $\Gamma$ is a non-boring srg$(n,k,1, \mu)$ with spectrum $k^1,r^f,s^g$ where $r>s$. Then the 3-clique graph of $\Gamma$ is strongly regular if and only if $s = -\frac{k}{2}$ or $k = 6$. If so, $C_3(\Gamma)$ has parameters
    $$\text{srg}\left( \frac{nk}{6},\; \frac{3k-6}{2},\; \frac{k-4}{2},\; \mu+3-\frac{k}{2} \right).$$
\end{cor}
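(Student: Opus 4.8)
The plan is to recognize Corollary \ref{cor:ll4} as the $\omega = 3$ specialization of Theorem \ref{thm:srg}, so the entire argument reduces to checking that the hypotheses of that theorem hold and then substituting $\omega = 3$ into its conclusion.

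First I would verify that $\Gamma$ is $3$-clique regular so that Theorem \ref{thm:srg} applies. Since $\Gamma$ is an srg$(n,k,1,\mu)$, its parameter $\lambda$ equals $1$, meaning every pair of adjacent vertices has exactly one common neighbor; equivalently, every edge lies in a unique triangle. This is precisely the locally linear, that is $3$-clique regular, condition discussed at the start of this subsection. Hence $\Gamma$ satisfies the hypotheses of Theorem \ref{thm:srg} with $\omega = 3$.

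Next I would translate the two equivalent conditions. Setting $\omega = 3$, the quantity $\frac{-k}{\omega - 1}$ becomes $-\frac{k}{2}$ and $\omega(\omega - 1)$ becomes $6$, so the criterion ``$s = \frac{-k}{\omega - 1}$ or $k = \omega(\omega - 1)$'' reads exactly as ``$s = -\frac{k}{2}$ or $k = 6$,'' which is the statement of the corollary.

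Finally I would read off the parameters. The one point worth flagging is that when $\omega = 3$ we have $\omega - 2 = 1 = \lambda$, so the extra hypothesis $\lambda = \omega - 2$ of Theorem \ref{thm:srg} is automatically met and the simplified formulas for $\lambda^*$ and $\mu^*$ are available. Substituting $\omega = 3$ then gives vertex count $\frac{nk}{\omega(\omega - 1)} = \frac{nk}{6}$, regularity $\omega\left(\frac{k}{\omega - 1} - 1\right) = \frac{3k - 6}{2}$, and the parameters $\lambda^* = \frac{k}{\omega - 1} - 2 = \frac{k - 4}{2}$ and $\mu^* = \mu + \omega - \frac{k}{\omega - 1} = \mu + 3 - \frac{k}{2}$, matching srg$\left(\frac{nk}{6}, \frac{3k - 6}{2}, \frac{k - 4}{2}, \mu + 3 - \frac{k}{2}\right)$. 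There is no genuine obstacle here: the result is a direct corollary, and the only subtlety is the bookkeeping observation that $\lambda = \omega - 2$ holds for free in the $\omega = 3$ case, which is exactly what lets the simplified parameter formulas carry over.
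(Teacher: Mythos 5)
Your proposal is correct and takes exactly the paper's route: the paper presents Corollary \ref{cor:ll4} as the direct $\omega = 3$ specialization of Theorem \ref{thm:srg}, offering no separate proof beyond the remark that an srg$(n,k,1,\mu)$ is precisely a locally linear (i.e.\ $3$-clique regular) strongly regular graph. Your added observation that $\lambda = 1 = \omega - 2$ holds automatically, so the simplified formulas for $\lambda^*$ and $\mu^*$ apply, is precisely the bookkeeping the paper leaves implicit.
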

Following from this corollary, we can show that there are only three non-boring locally linear strongly regular graphs with strongly regular $3$-clique graphs.
 \begin{thm}
     The only non-boring strongly regular locally linear graphs that have strongly regular $3$-clique graphs are the unique graphs with the parameters $\text{srg}(9,4,1,2)$, $\text{srg}(15,6,1,3)$, and $\text{srg}(27, 10, 1, 5)$.
 \end{thm}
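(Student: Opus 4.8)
The plan is to combine the spectral dichotomy of Corollary \ref{cor:ll4} with two standard feasibility conditions for strongly regular graphs, namely integrality of the eigenvalue multiplicities and the Krein inequalities. Since $\Gamma$ is non-boring, strongly regular and locally linear, it is an srg$(n,k,1,\mu)$, and Corollary \ref{cor:ll4} tells us that $C_3(\Gamma)$ is strongly regular exactly when $s=-k/2$ or $k=6$. I would analyze these two cases and show that together they admit only three feasible parameter sets.

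In the case $s=-k/2$, I would substitute into the relations $r+s=\lambda-\mu=1-\mu$ and $rs=-(k-\mu)=\mu-k$; solving forces $\mu=k/2$ and $r=1$, the only other solution $k=2$ being the boring cycle. The parameter identity then gives $n=3(k-1)$, so $\Gamma$ is an srg$(3(k-1),k,1,k/2)$ with $k$ even and integer eigenvalues $1$ and $-k/2$. Computing the multiplicity of the eigenvalue $-k/2$ from the trace conditions yields $g=\frac{8(k-1)}{k+2}$; since $8(k-1)=8(k+2)-24$, integrality forces $(k+2)\mid 24$, which for even $k\ge4$ leaves only $k\in\{4,6,10,22\}$.

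The crux is then to eliminate $k=22$, i.e. the parameter set srg$(63,22,1,11)$, which passes the multiplicity test. Here I would invoke the Krein inequality $(s+1)(k+s+2rs)\le(k+s)(r+1)^2$. For this whole family, with $r=1$ and $s=-k/2$, it simplifies to $\tfrac{k^2-2k}{4}\le 2k$, i.e. $k\le10$. Thus the Krein bound discards $k=22$ (while $k=8$ was already ruled out by the multiplicity condition), leaving exactly $k\in\{4,6,10\}$, corresponding to srg$(9,4,1,2)$, srg$(15,6,1,3)$ and srg$(27,10,1,5)$. I expect this Krein computation to be the main obstacle, since integrality by itself does not exclude the $63$-vertex case.

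Finally, in the case $k=6$ the graph is an srg$(n,6,1,\mu)$; the parameter identity forces $n=7+24/\mu$ with $\mu\mid24$ and $\mu\le6$, and imposing integer eigenvalues together with the bound $s\ge-k/2$ of Corollary \ref{cor:lowbound} eliminates every divisor except $\mu=3$, which recovers srg$(15,6,1,3)$ already found above. It then remains to record that each of the three surviving parameter sets is realized by a \emph{unique} graph: these are the collinearity graphs of $GQ(2,1)$, $GQ(2,2)$ and $GQ(2,4)$ (equivalently the $3\times 3$ rook graph, the complement of $T_6$, and the complement of the Schl\"afli graph), whose existence and uniqueness are classical and recorded in \cite{brouwer}.
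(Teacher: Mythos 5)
Your proposal is correct, and its skeleton matches the paper's: both invoke Corollary \ref{cor:ll4} to reduce to the two cases $s=-k/2$ and $k=6$; both derive $\mu=k/2$, $n=3(k-1)$ and $g=\frac{8(k-1)}{k+2}$ in the first case; and both use integrality to cut the candidates down to $k\in\{4,6,10,22\}$ (your divisibility argument $(k+2)\mid 24$ and the paper's bound $0<g<8$ are interchangeable). The genuine difference is how the last infeasible set, srg$(63,22,1,11)$, is eliminated. The paper simply cites the absolute bound (here $g=7$ forces $n\le\frac{1}{2}g(g+3)=35<63$), outsourcing the verification to \cite{brouwer}, whereas you run the Krein inequality $(s+1)(k+s+2rs)\le(k+s)(r+1)^2$, which with $r=1$, $s=-k/2$ collapses to $\frac{k^2-2k}{4}\le 2k$, i.e. $k\le 10$. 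Your computation checks out, and it is arguably the cleaner route: it gives a uniform bound for the whole family rather than a check of one sporadic parameter set, and it keeps the proof self-contained. Your $k=6$ case also deviates slightly: the paper tests integrality of the multiplicity $f$ for each divisor $\mu$ of $24$, while you combine $\mu\le k$, eigenvalue integrality, and the bound $s\ge -k/2$ from Corollary \ref{cor:lowbound} (which removes $\mu=6$, where $s=-5$). This works, but one small point should be made explicit: an srg can have irrational eigenvalues if it is a conference graph, so ``imposing integer eigenvalues'' requires noting that a conference graph has $\lambda=\mu-1$ and $k=2\mu$, which with $\lambda=1$ forces $(9,4,1,2)$ and is incompatible with $k=6$; the paper's multiplicity-integrality test subsumes this case automatically. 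Finally, both you and the paper treat the uniqueness of the three realizing graphs as classical, so there is no discrepancy there.
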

 \begin{proof}
     These graphs all have strongly regular $3$-clique graphs following from Corollary \ref{cor:ll4}. So let $\Gamma$ be an srg$(n,k,1,\mu)$ such that $C_3(\Gamma)$ is strongly regular and we will show that $\Gamma$ is the unique strongly regular graph on one of the enumerated parameters. From Corollary \ref{cor:ll4} either $k=6$ or $s=-\frac{k}{2}$ so first assume $k=6$. Then from $(n-k-1)\mu=k(k-\lambda-1)$ it follows that $n=\frac{24}{\mu}+7$. So since $\mu$ divides $24$, $\mu \in \{1, 2, 3, 4, 6, 8, 12, 24\}$. The multiplicity of the largest eigenvalue of $\Gamma$ is \[f = \frac{1}{2}\left[\left(\frac{24}{\mu}+6\right)-\frac{12 + (\frac{24}{\mu}+6)(1-\mu)}{\sqrt{(1 - \mu)^2+4(6-\mu)}}\right],\] from which it follows that $f$ is only an integer when $\mu =3$. This shows that $\Gamma$ is the unique srg$(15,6,1,3)$.\\
     Now assume that $s=-\frac{k}{2}$. From this we get
     \[s = \frac{1}{2}\left[(1 - \mu) - \sqrt{(1 - \mu)^2+4(k-\mu)}\right] = -\frac{k}{2},\] which implies that $\mu = \frac{k}{2}$. So from $(n-k-1)\mu=k(k-\lambda-1)$ we get that $\Gamma$ is an srg$(3(k-1), k, 1, \frac{k}{2})$ for some $k$. So the multiplicity of the smallest eigenvalue of $\Gamma$ is
     \[g = \frac{8(k-1)}{k+2}.\] Since $0<g<8$ when $k >1$,  $g \in \{1, \ldots, 7\}$ since the multiplicity must be an integer. Solving for $k$ gives $k\in \{2, 4, 6, 10, 22\}$ since $k$ must also be an integer. If $k=2$ this implies that $\Gamma \cong K_3$, a boring graph, and $k=22$ gives the parameters of $\Gamma$ as srg$(63, 22, 1, 11)$ which violate the absolute bound \cite{brouwer}. The remaining \linebreak $k\in \{4,6,10\}$ give the result.
 \end{proof}
 It should be noted that the unique srg$(27,10,1,5)$ has automorphism group of size $51840$ and its $3$-clique graph is the strongly regular graph on parameters srg$(45,12,3,3)$ with the largest automorphism group, also of size $51840$.\cite{cool}
\begin{figure}[H]
        \centering
        \includegraphics[width=0.5\linewidth]{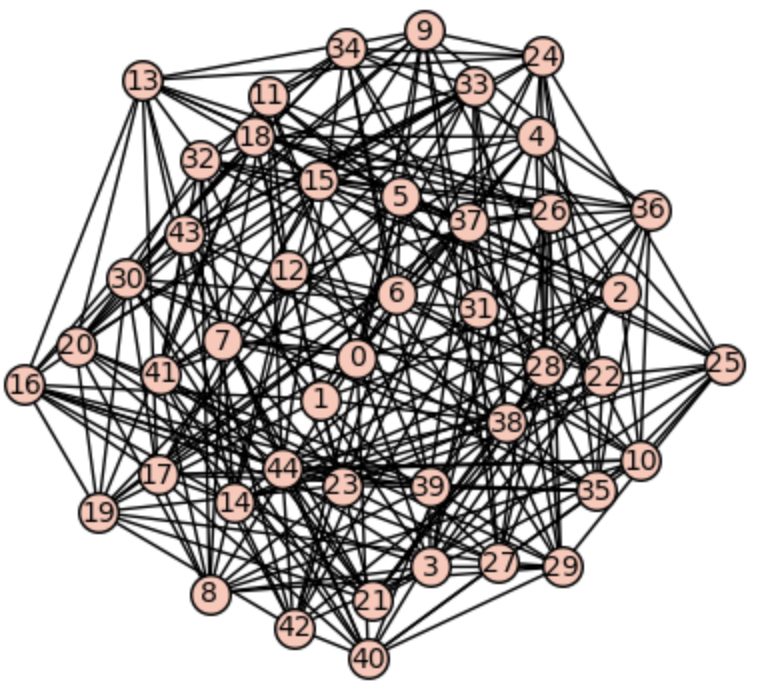}
        \caption{\centering Strongly regular graph with parameters (45,12,3,3) with automorphism group of size 51840}
        \label{fig:srg}
    \end{figure}

\begin{exmp} \label{exmp:1,2s}
    There are only five possible parameter sets of strongly regular graphs with $\lambda=1$ and $\mu= 2$. The strongly regular graphs with parameter sets (9,4,1,2) and (243,22,1,2) are the only two out of the five that are known to exist. Since srg(9,4,1,2) and srg(243,22,1,2) are regular and 3-clique regular, we can apply Corollary \ref{cor:ll3}, giving us that the $3$-clique graph of the unique strongly regular graphs $(9,4,1,2)$ and $(243,22,1,2)$ have spectrum $-3^1, 3^1, 0^4$ and $-3^{648}, 3^{110}, 12^{132}, 30^1$, respectively. Both of these are verified using SAGE. A long standing question in algebraic graph theory pertains to the existence of strongly regular graphs with parameters (99,14,1,2), (6273,112,1,2), and (494019,994,1,2). The existence question of a strongly regular graph with parameters (99,14,1,2) is called Conway's 99-graph problem. If such graphs exist then they would all be regular and locally linear, we can then apply Corollary \ref{cor:ll3} giving us the spectrum of their $3$-clique graphs as follows.
    \begin{center}
        \begin{tabular}{ |c|c| } 
        \hline
        Parameter sets & Spectrum of their 3-clique graph\\
         \hline
         $(9,4,1,2)$ & $-3^1, 3^1, 0^4$\\
         $(99,14,1,2)$ & $-3^{132},7^{54}, 18^1, 0^{44}$ \\ 
         $(243,22,1,2)$ & $-3^{648}, 3^{110}, 12^{132}, 30^1$\\
         $(6273,112,1,2)$ & $-3^{110823}, 42^{2992}, 63^{3280}, 165^1$ \\ 
         $(494019,994,1,2)$ & $-3^{81348462}, 462^{243104} ,525^{250914}, 1488^1$ \\ 
         \hline
        \end{tabular}
    \end{center}
\end{exmp}

\section*{Acknowledgments}
We are grateful to the Robert E. Tickle foundation for funding our research, and to Dr. Joshua Ducey for mentoring us.

\medskip

\printbibliography

\end{document}